\documentclass{article}
\usepackage[scale=0.75,a4paper]{geometry}
\usepackage{multirow}
\usepackage{braket,amsfonts}
\usepackage{bm}
\usepackage{amssymb}
\usepackage{stmaryrd}

\usepackage{mathtools} % redundant label removal, adjust limits
\usepackage[colorlinks,hypertexnames=false,citecolor=red,linkcolor=blue]{hyperref}
\usepackage{pgfplots}
\pgfplotsset{compat=1.16}

\usepackage{algorithmic}
\usepackage{comment}
\usepackage{graphicx,epstopdf}
\usepackage{booktabs}

\usepackage[affil-it]{authblk}
\usepackage{amsthm}
\newtheorem{theorem}{Theorem}[section]
\newtheorem{lemma}[theorem]{Lemma}

\usepackage{siunitx}

\date{}
\usepackage{float}
\usepackage{caption}
\usepackage{graphicx,xcolor}
\usepackage{verbatim}
\usepackage{subfigure}
\usepackage{cases}
\usepackage{amssymb,version}
\usepackage{color}
\usepackage{verbatim}
\usepackage{pgfplots}
\usetikzlibrary{shapes,arrows,snakes,calendar,matrix,backgrounds,folding,calc,positioning,spy}
\usepackage{tikz}

\makeatletter
\def\lst@lettertrue{\let\lst@ifletter\iffalse}
\makeatother
\begin{document}
	  \title{High-order, long-time stable and parallel decoupled GBDF$k$ SAV ensemble schemes for the Navier--Stokes--Darcy flow with random hydraulic conductivity tensors
		\thanks{This work of Fukeng Huang was partially supported by NSFC Grant No. 12501554 and  Changxin Qiu was partially supported by Zhejiang Provincial Natural Science Foundation of China under Grant No. LMS26A010009, NSFC Grant No. 12201327 and Natural Science Foundation of Ningbo Municipality Grant No. 2022J087.}}

	\author{Wei-Wei Han\footnote{School of Mathematical Science, Eastern Institute of Technology, Ningbo, 315200, PR China \\({\tt wwhan@eitech.edu.cn}).}
		\quad
		Fukeng Huang\footnote{School of Mathematical Science, Eastern Institute of Technology, Ningbo, 315200, PR China \\({\tt fkhuang@eitech.edu.cn}).}
		\quad
		Changxin Qiu\footnote{Corresponding author. School of Mathematics and Statistics, Ningbo University, Ningbo 315211, PR China. ({\tt qiuchangxin@nbu.edu.cn}).}
		
	}

\date{\today}
\maketitle

\begin{abstract}
We develop and analyze high-order ensemble schemes for the unsteady Navier--Stokes--Darcy system with uncertain initial conditions, forcing terms, hydraulic conductivity tensors, and Lions-Beavers-Joseph-Saffman interface conditions. The proposed schemes which are called GSAV-GBDF$k$-Ensemble schemes integrate a partitioned decoupling strategy, the generalized scalar auxiliary variable (GSAV) approach, and generalized BDF$k$ discretizations. This framework achieves high-order temporal accuracy and long-time stability, permits explicit treatment of the nonlinear term, and facilitates an efficient ensemble implementation for multiple parameter realizations by sharing a single, unified coefficient matrix at each time step. Moreover, the numerical solutions are shown to satisfy uniform-in-time bounds without time-step restrictions. Owing to the ensemble formulation, the resulting linear systems share common coefficient matrices, which significantly improves computational efficiency. We further establish optimal-order error estimates for the proposed high-order schemes.  Numerical results are included to confirm the theoretical analysis and to illustrate the accuracy, stability, and efficiency of the proposed methods.
\end{abstract}

{\normalsize }\textbf{Keywords:} Navier--Stokes--Darcy; GSAV; Ensemble scheme; High-order; Long-time stability

{\normalsize }\textbf{AMS subject classifications:}	 65M12, 65M15, 65M20.

\section{Introduction}
We consider in this paper the construction and analysis of efficient high-order ensemble schemes for the unsteady Navier--Stokes--Darcy system with uncertain initial conditions, forcing terms, and hydraulic conductivity tensors.
To account for these uncertainties, we consider an ensemble of $J$ unsteady Navier--Stokes--Darcy systems corresponding to different parameter sets
$\left( \mathbf{u}_{j}^{0}, \phi_{j}^{0}, \mathbf{f}_{f,j}, f_{p,j}, \mathrm{K}_{j} \right), j=1,\dots,J.$
More precisely, for each ensemble member, we seek the fluid velocity $\mathbf{u}_{j}$, fluid pressure $p_{j}$, and hydraulic head $\phi_{j}$ satisfying
\begin{subequations}
	\begin{align}
		&\frac{\partial \mathbf{u}_{j}}{\partial t} - \nu \Delta \mathbf{u}_{j} + \left( \mathbf{u}_{j} \cdot \nabla \right) \mathbf{u}_{j} + \nabla p_{j} = \mathbf{f}_{f,j}, \quad \text{and} \quad \nabla \cdot \mathbf{u}_{j} = 0, \quad \text{in } \Omega_f \times (0,T], \label{para_stokes} \\
		&S\frac{\partial \phi_{j}}{\partial t} - \nabla \cdot \left( \mathrm{K}_{j} \nabla \phi_{j} \right) = f_{p,j}, \quad \text{in } \Omega_p \times (0,T],\label{para_darcy}
	\end{align}
\end{subequations}
%\begin{equation}\label{para_stokes}
%	\frac{\partial \mathbf{u}_{j}}{\partial t} - \nu \Delta \mathbf{u}_{j} + \left( \mathbf{u}_{j} \cdot \nabla \right) \mathbf{u}_{j} + \nabla p_{j} = \mathbf{f}_{f,j}, \quad \text{and} \quad \nabla \cdot \mathbf{u}_{j} = 0, \quad \text{in } \Omega_f \times (0,T],
%\end{equation}
%and
%\begin{equation}\label{para_darcy}
%	S\frac{\partial \phi_{j}}{\partial t} - \nabla \cdot \left( \mathrm{K}_{j} \nabla \phi_{j} \right) = f_{p,j}, \quad \text{in } \Omega_p \times (0,T],
%\end{equation}
coupled through the \textcolor{black}{Lions--Beavers--Joseph--Saffman interface conditions}
\begin{equation}\label{para_interface}
	\left\{
	\begin{aligned}
		&\mathbf{u}_{j} \cdot \mathbf{n}_f - \mathrm{K}_{j}\nabla \phi_{j} \cdot \mathbf{n}_p = 0, \quad \text{on } \Gamma \times (0,T],\\
		&p_{j} - \nu \mathbf{n}_f \cdot \nabla \mathbf{u}_{j} \cdot \mathbf{n}_f + 1/2\mathbf{u}_{j}\cdot \mathbf{u}_{j} = g\phi_{j}, \quad \text{on } \Gamma \times (0,T],\\
		&-\nu \tau_i \cdot \nabla \mathbf{u}_{j} \cdot \mathbf{n}_f = \frac{\alpha_{BJ}\nu\sqrt{d}}{\sqrt{\tau_i \cdot \mathrm{K}_{j} \cdot \tau_i}}\, \mathbf{u}_{j}\cdot \tau_i, \quad \text{on } \Gamma \times (0,T], \quad i=1,\dots,d-1,
	\end{aligned}
	\right.
\end{equation}
where the initial and boundary conditions are subject to
\begin{displaymath}
	\begin{aligned}
		\mathbf{u}_{j}(0,x) = \mathbf{u}_{j}^0(x) \quad \text{in} \quad \Omega_f \quad \text{and} \quad \phi_{j} (0,x) = \phi_{j}^0(x) \quad \text{in} \quad \Omega_p,\\
		\mathbf{u}_{j}(t,x) = 0 \quad \text{on} \quad \Gamma_f  \quad \text{and} \quad \phi_{j} (t,x) = 0 \quad \text{on} \quad \Gamma_p.
	\end{aligned}
\end{displaymath}
Here, $\Omega=\Omega_f\cup\Omega_p \subset \mathbb{R}^2$, where $\Omega_f$ and $\Omega_p$ denote the free-fluid and porous-media regions, respectively, and $\Gamma$ is the interface between them. We also define $\Gamma_f = \partial \Omega_f \backslash \Gamma$ and $\Gamma_p = \partial \Omega_p \backslash \Gamma$.  The vectors $\mathbf{n}_f$ and $\mathbf{n}_p$ are the unit outward normals on $\partial\Omega_f$ and $\partial\Omega_p$, and $\{\tau_i\}_{i=1}^{d-1}$ are linearly independent unit tangent vectors on $\Gamma$. The constants $\nu$, $T$, $S$, $\alpha_{BJ}$, and $g$ denote the kinematic viscosity, final time, specific mass storativity coefficient, Beavers--Joseph--Saffman condition constant, and gravitational acceleration constant, respectively, and each hydraulic conductivity tensor $\mathrm{K}_{j}$ is assumed to be symmetric positive definite. We denote by $\mathbf{f}_{f,j}$ the external body force density and by $f_{p,j}$ the sink/source term. Specifically, the hydraulic conductivity tensors, denoted by $\mathrm{K}_{j}=\mathrm{K}(\mathbf{x}, \omega_j)$, are modeled as spatially dependent random fields (their explicit statistical characterizations are detailed in Section \ref{sec:experiments}). This parameterization fundamentally transforms the deterministic governing equations into a coupled stochastic partial differential equation (SPDE) system. Consequently, the primary objective of our analytical and numerical efforts is to efficiently and accurately compute the statistical moments of the solution quantities of interest, most notably the mean and variance.

The first condition in \eqref{para_interface} represents mass conservation and enforces the continuity of the normal velocity across the interface. The second equation, known as the Lions interface condition \cite{chidyagwai2009solution,girault2009dg}, imposes the balance of normal stresses. Specifically, it equates the normal stress in the free-fluid region to the corresponding terms in the porous medium, incorporating inertial effects through the dynamic pressure term $\frac{1}{2}\mathbf{u}_{j}\cdot \mathbf{u}_{j}$. The inclusion of this nonlinear term is crucial for establishing the rigorous nonlinear stability of the coupled scheme. Finally, the third equation specifies the well-known Beavers-Joseph-Saffman (BJS) condition \cite{beavers1967boundary,saffman1971boundary}, which dictates that the tangential slip velocity is proportional to the viscous shear stress at the interface.

The Navier--Stokes--Darcy system is a fundamental model for coupling free-fluid flow with porous-media flow and has been widely used in groundwater hydrology, oil reservoir simulation, filtration, and coupled surface--subsurface flow problems. In practical applications, uncertainties stemming from initial conditions, source terms, or heterogeneous hydraulic conductivity tensors usually require repeated simulations for large sets of samples, making uncertainty quantification computationally expensive. The overarching computational challenges are threefold. First, different samples may lead to different coefficient matrices, which greatly increases the cost of ensemble simulations. Second, the velocity, pressure, and hydraulic head are strongly coupled through the equations and interface conditions, so direct discretizations often require solving large coupled systems. Third, the nonlinear convection term is difficult to treat efficiently: an explicit treatment may suffer from stability restrictions, while an implicit treatment leads to nonlinear solves. These challenges motivate the construction of efficient, stable, and high-order ensemble schemes for the Navier--Stokes--Darcy system.

To systematically propagate uncertainties arising from random or parameterized inputs, uncertainty quantification (UQ) frameworks are widely employed. These frameworks typically couple sampling-based strategies, such as the Monte Carlo method and its variants \cite{barth2012multilevel,kuo2012quasi}, stochastic collocation methods \cite{babuvska2007stochastic,ganis2008stochastic,nobile2008sparse,xiu2005high}, or nonintrusive polynomial chaos methods \cite{reagana2003uncertainty}, with deterministic solvers for the underlying PDE system. While highly flexible and straightforward to implement, these approaches incur formidable computational costs, particularly when each sample realization necessitates the resolution of a large-scale, strongly coupled nonlinear system. An alternative paradigm involves substituting the high-fidelity model with reduced-order or surrogate models \cite{owen2017comparison}. Although this significantly mitigates the computational cost per evaluation, it inherently introduces approximation errors and circumvents, rather than accelerates, the repeated solution of the original full-order coupled system.

To directly address the computational bottleneck of repeated full-order simulations, the ensemble method, pioneered by Jiang and Layton \cite{jiang2014algorithm}, offers a highly attractive numerical paradigm. Its fundamental mechanism involves manipulating the temporal discretization to construct linear systems characterized by a shared, parameter-independent coefficient matrix for all ensemble members. This structural uniformity is crucial, as it enables the deployment of efficient block solvers and drastically amortizes the overhead of matrix factorizations or preconditioner setups across all realizations. Building on this profound computational advantage, the ensemble methodology has been rapidly extended to a broad spectrum of complex fluid models, including the Navier--Stokes equations \cite{gunzburger2019efficient,jiang2015higher,jiang2021stabilized}, natural convection \cite{fiordilino2018second}, magnetohydrodynamics (MHD) \cite{mohebujjaman2017efficient}, Stokes--Darcy systems \cite{he2020artificial,jiang2021artificial,jiang2019efficient,jiang2021sav,jiang2024highly}, and dual-porosity Navier--Stokes flows \cite{qiu2025high}.

%To handle the different subdomains for the fluid velocity, hydraulic head, and interface conditions, partitioned methods \cite{mu2010decoupled,chen2013efficient,layton2013analysis,shan2013partitioned,shan2013decoupling,chen2016efficient,wang2024class} have been widely used. These methods decouple the coupled problem into smaller subproblems posed on $\Omega_f$ and $\Omega_p$, allowing one to take advantage of existing solvers for each subproblem. This usually leads to reduced memory usage and lower computational cost.

Another important issue is the treatment of the nonlinear term in the Navier--Stokes equations. Implicit treatments are often robust but computationally expensive, especially in the ensemble setting. Explicit treatments are computationally attractive, but they may compromise stability unless carefully designed. For Stokes--Darcy-type systems, additional restrictions may arise from the partitioned treatment used to decouple the coupled system \cite{he2020artificial,jiang2019efficient}. In recent years, the scalar auxiliary variable (SAV) approach \cite{shen2018scalar,shen2019new} and its variants have provided an effective framework for constructing stable schemes with explicit treatment of nonlinear terms. In particular, it was shown in \cite{jiang2021sav} that the SAV approach can remove the time-step restriction induced by the partitioned treatment for linear Stokes--Darcy ensembles. The generalized SAV (GSAV) approach proposed in \cite{huang2022new,huang2020highly} preserves the main advantages of SAV while being more suitable for efficient implementation and high-order time discretizations. In this work, we further improve the GSAV approach and construct unconditionally long-time stable numerical schemes for the Navier--Stokes--Darcy system. To enhance the stability of high-order schemes, we adopt the recently developed generalized BDF$k$ schemes \cite{huang2024new}, which possess larger stability regions than the classical BDF$k$ schemes and thus permit larger time-step sizes in high-order discretizations for stiff problems. Motivated by these considerations, we combine a partitioned ensemble strategy, the GSAV approach, and the generalized BDF$k$ discretization to develop efficient, high-order, long-time stable schemes for the unsteady Navier--Stokes--Darcy system with uncertain initial conditions, forcing terms, hydraulic conductivity tensors, and Lions-Beavers-Joseph-Saffman interface conditions.

The main contributions of this work can be summarized as follows.
\begin{itemize}
	\item We develop high-order ensemble schemes for the unsteady Navier--Stokes--Darcy system that are based on a partitioned decoupling strategy \cite{chen2013efficient,layton2013analysis,mu2010decoupled,shan2013partitioned,wang2024class}, the GSAV approach, and generalized BDF$k$ formulas. The resulting schemes allow explicit treatment of the nonlinear term and can efficiently handle multiple realizations with varying parameters are solved using a single, unified coefficient matrix at each time step.
	\item We prove that the proposed schemes admit unconditional long-time uniform bounds for the numerical solutions. Thus, the schemes are suitable for long-time simulations of parameterized Navier--Stokes--Darcy ensembles without any time-step restriction arising from stability considerations.	
	\item We establish optimal-order error estimates for the proposed high-order schemes. To the best of our knowledge, this appears to be the first rigorous optimal-order error analysis for high-order GSAV ensemble schemes of this type for the nonlinear Navier--Stokes--Darcy system.
	\item We present extensive numerical examples to validate the theoretical results and to demonstrate the accuracy, stability, and efficiency of the proposed methods.
\end{itemize}

The rest of the paper is organized as follows. In Section~2, we introduce the notation and present the GSAV-GBDF$k$-Ensemble algorithms for the parameterized Navier--Stokes--Darcy system. In Section~3, we prove the long-time stability without time-step constraints of the proposed schemes. Section~4 is devoted to the error analysis, where optimal-order estimates are established. In Section~5, we present numerical experiments to verify the theoretical results and illustrate the efficiency of the proposed methods.

\section{Ensemble algorithm for the parameterized model}
In this section, we present the proposed ensemble algorithm for the unsteady Navier--Stokes--Darcy system. Before this, we first introduce some necessary notations and preliminaries.

\subsection{Notation and preliminaries}
Define the following function spaces:
\begin{displaymath}
	\begin{aligned}
		X_{f} &:= \left\{ \mathbf{u}\in \mathbf{H}^1\left( \Omega_f \right) : \mathbf{u}|_{\Gamma_f} = 0 \right\}, \quad X_p := \left\{ \psi \in H^1\left(\Omega_p \right) : \psi|_{\Gamma_p} = 0  \right\} \\
		Q_{f} &:= L^2\left(\Omega_f \right), \quad V := \left\{ \mathbf{u} \in X_f: \left( \nabla \cdot \mathbf{u} , p \right) = 0,\, \forall p \in Q_f \right\}.\\
	\end{aligned}
\end{displaymath}
We represent by $\| \cdot \|_{\Gamma}$ the $L^2\left( \Gamma \right)$ norm, by $\| \cdot \|_{f/p}$ the $L^2\left(\Omega_{f/p} \right)$ norms, and by $\| \cdot \|_{H^k\left(\Omega_{f/p}\right)}$ the $\mathbf{H}^k\left(\Omega_{f/p} \right)$ norm. For simplicity, we sometimes omit the subscript $f/p$. The trace and Poincar$\acute{e}$ inequalities are introduced as follows:
\begin{equation}\label{poincare}
	\begin{aligned}
		\| \phi \|_{\Gamma} \leq C\left(\Omega_p\right) \sqrt{ \| \phi \|_p  \| \nabla \phi \|_p } , \quad  \| \mathbf{u} \|_{\Gamma} \leq C\left(\Omega_f \right) \sqrt{ \| \mathbf{u} \|_f  \| \nabla \mathbf{u} \|_f } ,  \\
		\| \phi \|  \leq C_{p,p}\| \nabla \phi \| , \quad \forall \phi\in X_p , \quad \| \mathbf{u} \|  \leq C_{p,f}\| \nabla \mathbf{u} \|, \quad \forall \mathbf{u}\in X_f,
	\end{aligned}
\end{equation}
where $C\left(\Omega_{f/p}\right)$, $C_{p,f}$ and $C_{p,p}$ are positive constants depending on the domain $\Omega_{f/p}$.

For simplicity, we also introduce the following notations:
\begin{equation}
	\overline{\mathrm{K}} = \frac{1}{J}\sum_{j=1}^{J} \mathrm{K}_j, \quad \eta _{i,j} = \frac{\alpha _{BJ} \nu \sqrt{d}}{\sqrt{ \tau_i \cdot \mathrm{K}_j \cdot \tau_i } } , \quad \text{and} \quad \overline{\eta}_i = \frac{1}{J}\sum_{j=1}^{J} \eta _{i,j}.
\end{equation}
Then, denote by $|\cdot|_2$ the 2-norm of either vectors or matrices. Let $k_{j,\min}\left(x\right),\overline{k}_{\min}\left(x\right)$ represent the minimum eigenvalue of the hydraulic conductivity tensor $\mathrm{K}_{j}\left(x\right),\mathrm{K}\left(x\right)$ respectively, and $\rho_j^{\prime}\left(x\right)$ be the spectral radius of the fluctuation of hydraulic conductivity tensor $\mathrm{K}_{j}\left(x\right) - \mathrm{K}\left(x\right)$. We can also deduce $| \mathrm{K}_{j}\left(x\right) - \mathrm{K}\left(x\right) |_2 = \rho_j^{\prime}\left(x\right)$ from the symmetry of $\mathrm{K}_{j}\left(x\right)$ and $ \mathrm{K}\left(x\right)$. We need to introduce the following quantities to perform theoretical analysis:
\begin{displaymath}
	\begin{aligned}
		&\rho_j^{\prime \max} = \max_{x\in \Omega_p} \rho_j^{\prime}\left(x\right), \quad \rho^{\prime \max} = \max_{j}  \rho_j^{\prime \max} , \quad \eta_{i,j}^{\prime \max} = \max_{x\in \Omega_p} | \eta_{i,j}\left(x\right) - \overline{\eta}_{i}\left(x\right) |, \\
		& \eta_{i}^{\prime \max} = \max_{j} \eta_{i,j}^{\prime \max}, \quad
		\overline{\eta}_i^{\min} = \min_{x\in \Omega_p} \overline{\eta}_i\left(x\right), \quad \overline{\eta}_i^{\max} = \max_{x\in \Omega_p} \overline{\eta}_i\left(x\right) , \\
		&k_{j,\min} = \min_{x\in \Omega_p} k_{j,\min}\left(x\right) , \quad k_{\min} = \min_{j}k_{j,\min}, \quad \overline{k}_{\min} = \min_{x\in \Omega_p} \overline{k}_{\min}\left(x\right) .
	\end{aligned}
\end{displaymath}

\subsection{The GSAV-GBDF$k$-Ensemble schemes}
Herein, we are devoted to developing the GSAV-GBDF$k$-Ensemble schemes.
We first derive the weak formulation of the unsteady Navier--Stokes--Darcy system (\ref{para_stokes})-(\ref{para_interface}) as follows: find $\mathbf{u}_j \in X_f$, $p_j \in Q_f$ and $\phi_{j} \in X_p$ such that $\forall t \in (0,T]$
\begin{equation}\label{Weak_para_stokes}
	\left\{
	\begin{aligned}	 	
		&\left( \frac{\partial \mathbf{u}_{j}}{\partial t} , \mathbf{v} \right) + a \left(\mathbf{u}_{j} ,\mathbf{v} \right) + d\left(\mathbf{u}_{j},\mathbf{u}_{j},\mathbf{v} \right)  -b\left( \mathbf{v} , p_{j} \right) + \left( g\phi_{j} , \mathbf{v}\cdot \mathbf{n}_f \right)_{\Gamma} \\
		&+ \sum_{i=1}^{d-1} \left( \eta _{i,j} \mathbf{u}_{j}\cdot \tau_i , \mathbf{v}\cdot \tau_i  \right)_{\Gamma}  = \left( \mathbf{f}_{f,j} , \mathbf{v} \right), \quad \forall \mathbf{v} \in X_f,\quad \left( \nabla \cdot \mathbf{u}_{j} ,q   \right)  = 0,  \quad \forall q \in Q_f,
	\end{aligned}
	\right.
\end{equation}
and
\begin{equation}\label{Weak_para_darcy}
	gS\left( \frac{\partial \phi_{j}}{\partial t} , \psi \right) + g\left(  \mathrm{K}_{j}  \nabla \phi_{j} ,\nabla \psi  \right) - \left( g \psi, \mathbf{u}_{j}\cdot \mathbf{n}_f \right)_{\Gamma}  = g\left( f_{p,j} , \psi \right), \quad \forall \psi \in X_p,
\end{equation}
where we define the following notations:
\begin{equation*}
	\begin{aligned}
		a \left(\mathbf{u} ,\mathbf{v} \right) &= \nu \left(\nabla \mathbf{u} , \nabla \mathbf{v} \right) , \quad \forall \mathbf{u},\mathbf{v} \in X_f, \quad
		b\left( \mathbf{v} , p \right) = \left( \nabla \cdot \mathbf{v}, p \right) , \quad \forall \mathbf{v} \in X_f, p \in Q_f, \\
		C_{\Gamma} \left( \mathbf{v}, \psi \right) &= \left( g\psi, \mathbf{v}\cdot \mathbf{n}_f \right)_{\Gamma}, \quad \forall \mathbf{v} \in X_f,\psi \in X_p,  \\
		d\left(\mathbf{u},\mathbf{v},\mathbf{w} \right) &= \left( \left( \mathbf{u} \cdot \nabla  \right) \mathbf{v}, \mathbf{w} \right) -\frac{1}{2}\int_{\Gamma} \left( \mathbf{u} \cdot \mathbf{v} \right) \left( \mathbf{w} \cdot \mathbf{n}_f \right) \mathrm{d}s, \quad \forall \mathbf{u},\mathbf{v},\mathbf{w} \in X_f.
	\end{aligned}
\end{equation*}
%\begin{equation*}
%	\begin{aligned}
	%		a_{f}^j \left(\mathbf{u}_f^{j} ,\mathbf{v} \right) &= \nu \left(\nabla \mathbf{u}_f^j , \nabla \mathbf{v} \right)_{\Omega_f}   \\
	%		a_{p}^j \left( \phi ^j ,\psi \right) &= g\left( \mathrm{K}_j \nabla \phi ^j , \nabla \psi  \right)_{\Omega_p}
	%	\end{aligned}
%\end{equation*}

Furthermore, taking $\mathbf{v} = \mathbf{u}_{j}$ and $\psi = \phi_j$ in (\ref{Weak_para_stokes}) and (\ref{Weak_para_darcy}), and summing the two equations, it yields the following energy equation:
\begin{equation}\label{energy_equ}
	\begin{aligned}
		\frac{\mathrm{d}}{\mathrm{d}t} \left(  \frac{1}{2} \| \mathbf{u}_{j} \|^2 + \frac{1}{2}gS\| \phi_j \|^2 \right) & + \nu \| \nabla \mathbf{u}_{j} \|^2 + g\| \sqrt{ \mathrm{K}_j } \nabla \phi _j \|^2 + \sum_{i=1}^{d-1} \| \sqrt{ \eta_{i,j} } \mathbf{u}_{j} \cdot \tau_i \|^2_{ \Gamma } = \left( \mathbf{f}_{f,j} , \mathbf{u}_{j} \right) + g\left( f_{p,j} , \phi _j \right).
	\end{aligned}
\end{equation}

To introduce the SAV method, we define $E_j\left( \mathbf{u}_{j} , \phi _j \right) = \frac{1}{2} \| \mathbf{u}_{j} \|^2 + \frac{1}{2}gS\| \phi _j \|^2$ and $r_j\left( t \right) = E_j\left( \mathbf{u}_{j} , \phi _j \right) + C_{r,j} >0 $. Consider the following auxiliary variable equation:
\begin{equation}\label{SAV_equation}
	\begin{aligned}
		&\frac{\mathrm{d} r_j\left(t\right)}{\mathrm{d}t} + \gamma _j r_j\left(t\right) = - \Big( \nu \| \nabla \mathbf{u}_{j} \|^2 + g\| \sqrt{ \mathrm{K}_j } \nabla \phi _j \|^2 + \sum_{i=1}^{d-1} \| \sqrt{ \eta_{i,j} } \mathbf{u}_{j} \cdot \tau_i \|^2_{ \Gamma }    - \left( \mathbf{f}_{f,j} , \mathbf{u}_{j} \right) - \left( gf_{p,j} , \phi _j \right) \\
		& + \frac{\alpha _j}{2} \left( \| \mathbf{f}_{f,j} \|^2 + \| g f_{p,j} \|^2 \right)    - \frac{\gamma_j}{2}\left( \| \mathbf{u}_{j} \|^2 + gS\| \phi _j \|^2 \right) \Big) + \frac{\alpha _j}{2} \left( \| \mathbf{f}_{f,j} \|^2 + \| g f_{p,j} \|^2 \right) + \gamma_j C_{r,j},
	\end{aligned}
\end{equation}
where $\gamma _j$ and $\alpha _j$ are positive constants to be specified below. They play a significant role in developing the long-time stable scheme.

Next, we propose the GSAV-GBDF$k$-Ensemble algorithm as follows:
find $\overline{\mathbf{u}}_j^{n+1}$, $\mathbf{u}_j^{n+1}$, $p_j^{n+1}$, $\overline{\phi}_j^{n+1}$, and $\phi_j^{n+1}  $ such that $\forall \mathbf{v}\in X_f,q\in Q_f, \psi \in X_p$
\begin{small}
\begin{equation}\label{ensemble_subsys1}
	\left\{
	\begin{aligned}
		&\left( \frac{ \alpha _k^{\beta} \overline{\mathbf{u}}_{j}^{n+1} + \overline{A}_k^{\beta} \left( \mathbf{u}_{j}^{n} \right) }{ \Delta t } , \mathbf{v} \right)  + \sum_{i=1}^{d-1} \left( \overline{ \eta }_{i}B_k^{\beta}\left( \overline{\mathbf{u}}_{j}^{n+1} \right) \cdot \tau_i , \mathbf{v}
		\cdot \tau_i  \right)_{\Gamma}  + \sum_{i=1}^{d-1} \left( \left(  \eta_{i,j} - \overline{ \eta }_{i} \right) C_k^{\beta}\left( \overline{\mathbf{u}}_{j}^{n} \right) \cdot \tau_i , \mathbf{v}
		\cdot \tau_i  \right)_{\Gamma} \\
		&+ a\left( B_k^{\beta}\left( \overline{\mathbf{u}}_{j}^{n+1} \right) , \mathbf{v} \right) + d\left( C_k^{\beta}\left( \overline{\mathbf{u}}_{j}^{n} \right) , C_k^{\beta}\left( \overline{\mathbf{u}}_{j}^{n} \right) ,\mathbf{v} \right)  -b\left( \mathbf{v} , B_k^{\beta}\left( p_{j}^{n+1} \right) \right) + \left( gC_k^{\beta}\left( \overline{\phi}_{j}^{n} \right) ,\mathbf{v} \cdot \mathbf{n}_f \right)_{\Gamma} = \left( \mathbf{f}_{f,j}^{n+\beta} , \mathbf{v} \right), \\
		& b\left( B_k^{\beta}\left( \overline{\mathbf{u}}_{j}^{n+1} \right) , q \right) = 0,
	\end{aligned}
	\right.
\end{equation}
\end{small}
\begin{equation}\label{ensemble_subsys2}
	\left\{
	\begin{aligned}
		&gS\left( \frac{ \alpha _k^{\beta} \overline{\phi}_{j}^{n+1} + \overline{A}_k^{\beta} \left( \phi_{j}^{n} \right) }{ \Delta t } , \psi \right) + g\left( \overline{ \mathrm{K} } \nabla B_k^{\beta} \left( \overline{\phi}_{j}^{n+1} \right)  ,\nabla \psi \right) \\
		&+ g\left( \left( \mathrm{K}_j - \overline{\mathrm{K}} \right) \nabla C_k^{\beta} \left( \overline{\phi}_{j}^{n} \right), \nabla \psi \right)  - \left( g\psi , C_k^{\beta}\left( \overline{\mathbf{u}}_{j}^{n} \right) \cdot \mathbf{n}_f \right)_{\Gamma}  = g\left( f_{p,j}^{n+\beta} , \psi \right)  ,
	\end{aligned}
	\right.
\end{equation}

\begin{equation}\label{ensemble_subsys3}
	\left\{
	\begin{aligned}
		\frac{ r_j^{n+1} - r_j^{n} }{ \Delta t }& + \gamma _j r_j^{n+1} = - \frac{ r_j^{n+1} }{ E_j\left( \overline{\mathbf{u}}_{j}^{n+1} , \overline{\phi}_{j}^{n+1} \right) + C_{r,j} }  \left\{ \nu \| \nabla \overline{\mathbf{u}}_{j}^{n+1} \|^2 + g\| \sqrt{ \mathrm{K}_j } \nabla \overline{\phi}_{j}^{n+1} \|^2   \right. \\
		& + \sum_{i=1}^{d-1} \| \sqrt{ \eta_{i,j} } \overline{\mathbf{u}}_{j}^{n+1} \cdot \tau_i \|^2_{ \Gamma } - \left( \mathbf{f}_{f,j}^{n+1} , \overline{\mathbf{u}}_{j}^{n+1} \right) - \left( gf_{p,j}^{n+1} , \overline{\phi}_{j}^{n+1} \right) + \frac{\alpha _j}{2} \left( \| \mathbf{f}_{f,j}^{n+1} \|^2 + \| g f_{p,j}^{n+1} \|^2 \right)  \\
		& \left.  - \frac{\gamma_j}{2}\left( \| \overline{\mathbf{u}}_{j}^{n+1} \|^2 + gS\| \overline{\phi}_{j}^{n+1} \|^2 \right)  \right\} + \frac{\alpha _j}{2} \left( \| \mathbf{f}_{f,j}^{n+1} \|^2 + \| g f_{p,j}^{n+1} \|^2 \right) + \gamma_j C_{r,j} ,
	\end{aligned}
	\right.
\end{equation}

\begin{equation}\label{ensemble_subsys4}
	\xi _j^{n+1}  = \frac{ r_j^{n+1} }{ E_j\left( \overline{\mathbf{u}}_{j}^{n+1} , \overline{\phi}_{j}^{n+1} \right) + C_{r,j} } , \quad
	\mathbf{u}_{j}^{n+1} = \eta_{k,j}^{n+1} \overline{\mathbf{u}}_{j}^{n+1}, \quad
	\phi_{j}^{n+1} = \eta_{k,j}^{n+1} \overline{\phi}_{j}^{n+1}, \quad
	\eta_{k,j}^{n+1} = 1 - \left( 1 - \xi _j^{n+1} \right)^{k+1} ,
\end{equation}
where $\beta >1$, $\alpha_k^{\beta}$, the linear operators $A_k^{\beta}$, $\overline{A}_k^{\beta}$, $B_k^{\beta}$ and $C_k^{\beta} \left(k=2,3,4\right)$ are given as follows:

\noindent second-order:
\begin{equation}
	\begin{aligned}
		&\overline{A}_2^{\beta}\left( \mathbf{u}^{n} \right) = -2\beta \mathbf{u}^{n} + \frac{2\beta -1 }{2} \mathbf{u}^{n-1}; \alpha_2^{\beta} = \frac{2\beta + 1}{2},\quad  A_2^{\beta}\left( \mathbf{u}^{n+1} \right) = \alpha_2^{\beta}\mathbf{u}^{n+1} + \overline{A}_2^{\beta}\left( \mathbf{u}^{n} \right);   \\
		&B_2^{\beta}\left( \mathbf{u}^{n} \right) = \beta \mathbf{u}^{n} - \left(\beta - 1 \right)\mathbf{u}^{n-1}, \quad C_2^{\beta}\left( \mathbf{u}^{n} \right) = \left( \beta + 1 \right) \mathbf{u}^{n} - \beta \mathbf{u}^{n-1}.
	\end{aligned}
\end{equation}
third-order:
\begin{equation}\label{3rd-order}
	\begin{aligned}
		&\overline{A}_3^{\beta}\left( \mathbf{u}^{n} \right) = \frac{-\left(  9\beta ^2 + 12\beta -3 \right) }{6} \mathbf{u}^{n} + \frac{9\beta ^2 + 6\beta - 6 }{6} \mathbf{u}^{n-1} + \frac{-\left( 3\beta ^2 -1 \right)}{6}\mathbf{u}^{n-2};\\
		&\alpha_3^{\beta} = \frac{3\beta ^2 + 6\beta + 2}{6}, \quad A_3^{\beta}\left( \mathbf{u}^{n+1} \right) = \alpha_3^{\beta}\mathbf{u}^{n+1} + \overline{A}_3^{\beta}\left( \mathbf{u}^{n} \right);   \\
		& B_3^{\beta}\left( \mathbf{u}^{n} \right) = \frac{\beta ^2 + \beta}{2} \mathbf{u}^{n} - \left(\beta ^2 - 1 \right)\mathbf{u}^{n-1} + \frac{\beta ^2 - \beta}{2}\mathbf{u}^{n-2}; \\
& C_3^{\beta}\left( \mathbf{u}^{n} \right) = \frac{\beta ^2 + 3\beta +2}{2} \mathbf{u}^{n} - \left( \beta ^2 + 2\beta \right)\mathbf{u}^{n-1} + \frac{\beta ^2 + \beta}{2}\mathbf{u}^{n-2}.
	\end{aligned}
\end{equation}
fourth-order:
\begin{equation}
	\begin{aligned}
		\overline{A}_4^{\beta}\left( \mathbf{u}^{n} \right) &= \frac{ -8\beta^3 -30\beta^2 -20\beta + 10 }{12} \mathbf{u}^{n} + \frac{ 12\beta ^3 + 36\beta ^2 + 6\beta - 18 }{12} \mathbf{u}^{n-1}  \\
		&  + \frac{  -8\beta^3-18\beta^2 + 4\beta + 6 }{12}\mathbf{u}^{n-2} + \frac{2\beta^3 +3\beta^2-\beta-1}{12}\mathbf{u}^{n-3};\\
		&\alpha_4^{\beta} = \frac{ 2\beta ^3 + 9\beta ^2 + 11\beta +3 }{12}, \quad A_4^{\beta}\left( \mathbf{u}^{n+1} \right) = \alpha_4^{\beta}\mathbf{u}^{n+1} + \overline{A}_4^{\beta}\left( \mathbf{u}^{n} \right);   \\
		B_4^{\beta}\left( \mathbf{u}^{n} \right) &= \frac{ \beta^3 + 3\beta^2 +2\beta }{6} \mathbf{u}^{n} + \frac{-\beta^3-2\beta^2+\beta+2}{2}\mathbf{u}^{n-1}  + \frac{ \beta ^3 + \beta ^2 - 2\beta}{2}\mathbf{u}^{n-2} + \frac{-\beta^3 + \beta }{6}\mathbf{u}^{n-3}; \\
		C_4^{\beta}\left( \mathbf{u}^{n} \right) &= \frac{ \beta^3 + 6\beta^2 + 11\beta + 6 }{6} \mathbf{u}^{n} + \frac{-\beta^3-5\beta^2-6\beta}{2}\mathbf{u}^{n-1}  + \frac{\beta^3 + 4\beta ^2 + 3\beta}{2}\mathbf{u}^{n-2} + \frac{-\beta^3-3\beta^2-2\beta}{6}\mathbf{u}^{n-3}.
	\end{aligned}
\end{equation}

Herein, we employ the generalized BDF$k$ scheme to discretize the time derivative, which was originally proposed in \cite{huang2024new} and allows the use of a larger time step compared with the classical BDF$k$ scheme. Additionally, the generalized BDF$k$ scheme plays an important role in constructing the high-order decoupled stable scheme for solving the Navier-Stokes equation \cite{huang2023stability,huang2025stability}.

\subsection{Several useful lemmas}
In this subsection, we present several useful lemmas, which will be used in the following proofs.
\begin{lemma}\label{gronwall1}
	(discrete Gronwall lemma 1 \cite{shen1990long}). Let $a^n,b^n$ be two positive sequences satisfying
	\begin{displaymath}
		\frac{a^{n+1}-a^n}{\Delta t} + \lambda a^{n+1} \leq b^n \quad \text{and} \quad b^n \leq b,\forall n\geq 0.
	\end{displaymath}
	Assume that $\Delta t>0$, $1+ \lambda \Delta t >0$. Then
	\begin{displaymath}
		a^n \leq \frac{1}{ (1+ \lambda\Delta t)^n }a^0 + \frac{ 1 + \lambda \Delta t }{ \lambda }\left( 1 - \frac{1}{\left(1+ \lambda \Delta t \right)^{n+1}} \right)b,\, \forall n \geq 0.
	\end{displaymath}
\end{lemma}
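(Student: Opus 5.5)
Since $1+\lambda\Delta t>0$ and the sequences are positive, I will rewrite the hypothesis as a one-step linear recursion and unroll it. Multiplying $\frac{a^{n+1}-a^n}{\Delta t}+\lambda a^{n+1}\le b^n$ by $\Delta t$ and dividing by $1+\lambda\Delta t>0$ gives
\begin{equation*}
a^{n+1}\le \frac{1}{1+\lambda\Delta t}\,a^n+\frac{\Delta t}{1+\lambda\Delta t}\,b^n\le q\,a^n+q\,\Delta t\, b,\qquad q:=\frac{1}{1+\lambda\Delta t}>0,
\end{equation*}
where the second inequality uses $b^n\le b$.

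Next I iterate this inequality by induction on $n$. Positivity of $q$ preserves inequalities when a previous bound is multiplied by $q$, so
\begin{equation*}
a^{n}\le q^{n}a^0+\Delta t\, b\sum_{m=1}^{n}q^{m}.
\end{equation*}
The sum is geometric: for $q\ne 1$ (i.e. $\lambda\neq 0$, as implicitly required since $\lambda$ appears in a denominator),
\begin{equation*}
\Delta t\sum_{m=1}^{n}q^m=\Delta t\,\frac{q(1-q^{n})}{1-q}=\frac{1-q^n}{\lambda},
\end{equation*}
because $\frac{q}{1-q}=\frac{1}{\lambda\Delta t}$.

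Finally I compare this with the claimed bound. Write $s=1+\lambda\Delta t$. Then
\begin{equation*}
\frac{s}{\lambda}\Bigl(1-\frac{1}{s^{n+1}}\Bigr)-\frac{1}{\lambda}\Bigl(1-\frac{1}{s^{n}}\Bigr)=\frac{1}{\lambda}\,(s-1)=\Delta t .
\end{equation*}
Hence the stated coefficient of $b$ equals my derived coefficient plus $\Delta t$. The claimed estimate is therefore slightly weaker than what the unrolling gives, provided $b\ge 0$. This holds because $b\ge b^n>0$. The $a^0$ terms agree exactly.

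The only delicate point is sign bookkeeping. If $\lambda<0$ (allowed as long as $1+\lambda\Delta t>0$), then $1-q^n$ and $\lambda$ are both negative, so $\frac{1-q^n}{\lambda}$ is still positive and equal to the geometric sum. The identity above also holds regardless of sign. I therefore expect no real obstacle beyond checking that every division is by $1+\lambda\Delta t>0$ and that the geometric-sum identity is valid for $q\ne1$.
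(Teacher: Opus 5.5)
Your proof is correct. Dividing by $1+\lambda\Delta t>0$, unrolling, and summing the geometric series gives $a^n\le q^n a^0+\frac{1-q^n}{\lambda}\,b$. This is sharper than the stated bound by exactly $\Delta t\, b\ge 0$, and your sign check for $\lambda<0$ is right.

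The paper gives no proof of its own; it only cites Shen's 1990 paper. Your unrolling argument is the standard one: the stated coefficient of $b$ equals $\Delta t\sum_{i=0}^{n}(1+\lambda\Delta t)^{-i}$, which is your sum $\Delta t\sum_{i=1}^{n}(1+\lambda\Delta t)^{-i}$ plus the $i=0$ term.
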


\begin{lemma}\label{gronwall2}
	(discrete Gronwall lemma 2 \cite{shen1990long}). Let $a^k,b^k,c^k,d^k$ be four nonnegative sequences satisfying
	\begin{displaymath}
		a^n + \Delta t \sum_{k=0}^{n}b^k \leq B + \Delta t \sum_{k=0}^{n}\left( c^k a^k + d^k \right) \quad \text{with} \quad \Delta t \sum_{k=0}^{T/\Delta t}c^k \leq M,\,\forall 0\leq n \leq T/\Delta t.
	\end{displaymath}
	Assume that $\Delta t c^k <1,\, \forall k$, and let $\Sigma = \max_{0\leq k \leq T/\Delta t}\left( 1-\Delta t c^k \right)^{-1}$. Then
	\begin{displaymath}
		a^n + \Delta t \sum_{k=1}^{n}b^k \leq \exp\left(\Sigma M\right)\left( B + \Delta t \sum_{k=0}^{n}d^k \right)\, \forall n \leq T/\Delta t.
	\end{displaymath}
\end{lemma}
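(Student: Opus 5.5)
The estimate follows by induction on $n$. The inductive bound is the closed form
\begin{displaymath}
P_n:\quad a^n + \Delta t\sum_{k=0}^{n}b^k \le \Big(B+\Delta t\sum_{k=0}^{n} d^k\Big)\prod_{k=0}^{n}\left(1-\Delta t c^k\right)^{-1},
\end{displaymath}
which turns the implicit term $c^na^n$ into a multiplicative factor. Once $P_n$ holds, each factor satisfies $(1-\Delta t c^k)^{-1}=1+\Delta t c^k(1-\Delta t c^k)^{-1}\le 1+\Sigma\Delta t c^k\le \exp(\Sigma\Delta t c^k)$. The product is therefore at most $\exp\big(\Sigma\Delta t\sum_k c^k\big)\le \exp(\Sigma M)$ by the summability hypothesis, which gives the stated bound. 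The left-hand side in the statement runs from $k=1$ rather than $k=0$, so it is weaker and follows immediately from $P_n$.

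To prove $P_n$, put $D_n:=B+\Delta t\sum_{k=0}^{n}d^k$ and $S_n:=a^n+\Delta t\sum_{k=0}^n b^k$. The hypothesis at level $n$ gives
\begin{displaymath}
(1-\Delta t c^n)\,a^n+\Delta t\sum_{k=0}^n b^k\le D_n+\Delta t\sum_{k=0}^{n-1}c^ka^k .
\end{displaymath}
Since $b^k\ge0$ and $1-\Delta t c^n\in(0,1]$, the left side is at least $(1-\Delta t c^n)S_n$. Hence
\begin{displaymath}
S_n\le (1-\Delta t c^n)^{-1}\Big(D_n+\Delta t\sum_{k=0}^{n-1}c^ka^k\Big).
\end{displaymath}
The remaining task is to control the lagged sum $\Delta t\sum_{k<n}c^ka^k$. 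Define $y_n:=D_n+\Delta t\sum_{k=0}^{n-1}c^ka^k$, so that $a^n\le S_n\le(1-\Delta t c^n)^{-1}y_n$. Then
\begin{displaymath}
y_{n+1}=y_n+\Delta t\,(d^{n+1}+c^na^n)\le \Delta t\,d^{n+1}+(1-\Delta t c^n)^{-1}y_n .
\end{displaymath}
Iterating this recursion, using that the $d^k$ are nonnegative and that each factor $(1-\Delta t c^k)^{-1}\ge1$, gives $y_n\le D_n\prod_{k=0}^{n-1}(1-\Delta t c^k)^{-1}$. Substituting into the bound for $S_n$ yields $P_n$.

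The base case $n=0$ is direct, since the lagged sum is empty and $y_0=D_0$. The only delicate point is this middle bookkeeping: isolating the implicit $c^na^n$ term first, then running the auxiliary recursion for $y_n$ so that the $d$-terms accumulate additively while the $c$-terms accumulate multiplicatively. Every other step is an elementary inequality.
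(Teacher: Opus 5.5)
Your proof is correct and complete. The paper gives no proof of this lemma; it only quotes it from \cite{shen1990long}, so there is no in-paper argument to compare with. Your induction is the standard proof of this implicit discrete Gronwall inequality. You first absorb the implicit term $\Delta t\,c^n a^n$ into the left-hand side. You then propagate the lagged quantity $y_n$, so that the $d$-terms accumulate additively and the $c$-terms multiplicatively.

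It actually gives the sharper intermediate bound
$a^n+\Delta t\sum_{k=0}^n b^k\le D_n\prod_{k=0}^n(1-\Delta t\,c^k)^{-1}\le D_n\exp\bigl(\Delta t\sum_{k=0}^n c^k(1-\Delta t\,c^k)^{-1}\bigr)$.
This is the Heywood--Rannacher form, and $\exp(\Sigma M)$ is a cruder consequence of it.

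One step deserves to be stated explicitly. Replacing the product by $\exp(\Sigma M)$ in $D_n\prod_{k}(1-\Delta t\,c^k)^{-1}$ requires $D_n\ge 0$, and the statement does not assume $B\ge 0$. This is automatic in your setting: $0\le S_n\le D_n\prod_{k}(1-\Delta t\,c^k)^{-1}$ and the product is positive, so $D_n\ge 0$.
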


\begin{lemma}\label{gronwall3}
	(discrete Gronwall lemma 3 \cite{shen1990long}). Let $a^k,b^k,c^k,d^k$ be four nonnegative sequences satisfying
	\begin{displaymath}
		a^n + \Delta t \sum_{k=1}^{n}b^k \leq \Delta t \sum_{k=1}^{n-1} c^k a^k + \Delta t \sum_{k=1}^{n-1}d^k + C , n \geq 1,
	\end{displaymath}
	where $\Delta t$ and $C$ are two positive constants. Then
	\begin{displaymath}
		a^n + \Delta t \sum_{k=1}^{n}b^k \leq \exp\left(\Delta t \sum_{k=1}^{n-1} c^k \right)\left( \Delta t \sum_{k=1}^{n-1}d^k + C \right),\, n \geq 1.
	\end{displaymath}
\end{lemma}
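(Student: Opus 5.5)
We prove the discrete Gronwall lemma 3 by the standard induction/comparison argument. Set $S_n := \Delta t\sum_{k=1}^{n-1} c^k a^k$ and $R_n := \Delta t\sum_{k=1}^{n-1} d^k + C$. Since $b^k\ge 0$, the hypothesis gives in particular $a^n \le S_n + R_n$. Here $R_n$ is nondecreasing in $n$ and positive because $C>0$. The plan is to first bound $S_n$ in terms of $R_n$. Then we return to the full inequality, including the $b$-sum, to obtain the claimed estimate.

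\textbf{Step 1: a recursion for the sum.} We have $S_{n+1}-S_n=\Delta t\,c^n a^n\le \Delta t\,c^n (S_n+R_n)$. Hence
\begin{displaymath}
S_{n+1}+R_{n+1} \le (1+\Delta t\,c^n)(S_n+R_n) + (R_{n+1}-R_n).
\end{displaymath}
To absorb the increment of $R$, divide by $R_{n+1}$ and use $R_n\le R_{n+1}$. With $y_n:=(S_n+R_n)/R_n$ this gives
\begin{displaymath}
y_{n+1}\le (1+\Delta t c^n)\frac{S_n+R_n}{R_{n+1}}+\frac{R_{n+1}-R_n}{R_{n+1}} .
\end{displaymath}
A cleaner route is to compare directly: we prove by induction the bound
\begin{displaymath}
S_n+R_n\le R_n\prod_{k=1}^{n-1}(1+\Delta t c^k).
\end{displaymath}
The base case $n=1$ holds since $S_1=0$. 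For the inductive step, write $P_n=\prod_{k=1}^{n-1}(1+\Delta t c^k)\ge 1$. Then
\begin{displaymath}
S_{n+1}+R_{n+1}\le (1+\Delta t c^n)R_nP_n+(R_{n+1}-R_n)\le R_{n+1}P_{n+1},
\end{displaymath}
where the last inequality uses $R_n P_{n+1} + (R_{n+1}-R_n)\le R_{n+1}P_{n+1}$, which holds because $P_{n+1}\ge1$ and $R_{n+1}-R_n\ge0$.

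\textbf{Step 2: conclude.} Substituting back into the original hypothesis gives
\begin{displaymath}
a^n+\Delta t\sum_{k=1}^n b^k\le S_n+R_n\le R_n\prod_{k=1}^{n-1}(1+\Delta t c^k).
\end{displaymath}
Finally, $1+x\le e^x$ turns the product into $\exp(\Delta t\sum_{k=1}^{n-1}c^k)$, which is the claimed estimate. Note that no smallness condition on $\Delta t c^k$ is needed. This is because the sum on the right-hand side of the hypothesis stops at $n-1$, so the scheme is explicit in $a$ and nothing has to be absorbed into the left-hand side.

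The only delicate point is Step 1, namely handling a nonconstant $R_n$. The induction hypothesis has to be phrased with the current $R_n$ multiplying the product, and the monotonicity of $R_n$ is exactly what closes the step.
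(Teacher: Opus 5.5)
Your proof is correct: the induction $S_n+R_n\le R_nP_n$ closes exactly as you say, because $(R_{n+1}-R_n)(P_{n+1}-1)\ge 0$, and $1+x\le e^x$ then gives the bound with no restriction on $\Delta t\,c^k$, since the $c$-sum on the right stops at $n-1$. The paper gives no proof of its own and simply quotes the lemma from Shen's 1990 paper. Your argument is the standard one behind that reference: the full right-hand side $Y_n=S_n+R_n$ satisfies $Y_{n+1}\le(1+\Delta t\,c^n)Y_n+\Delta t\,d^n$, which you then unroll. The only edit worth making is to delete the unused $y_n$ detour at the start of Step 1.
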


\begin{lemma}\label{gronwall4}
	Let $\alpha \left( \varsigma \right) = \alpha_q \varsigma ^q + \cdots + \alpha_0$ and $\mu \left(\varsigma \right) = \mu _q \varsigma ^q + \cdots + \mu_0 $ be polynomials of degree at most $q$ (and at least one of them of degree $q$) that have no common divisors. Denote by $\left(\cdot,\cdot\right)$ an inner product with associated norm $|\cdot|$. If $\text{Re}\frac{ \alpha \left( \varsigma \right) }{ \mu \left(\varsigma \right) } > 0 \quad \text{for} \quad |\varsigma| >1,$
	then there exists a symmetric positive definite matrix $G = \left(g_{ij}\right)\in \mathbb{R}^{q\times q}$ and real $\delta_0,\cdots,\delta _q$ such that for $v^0,\cdots,v^q$ in the inner product space,
	\begin{displaymath}
		\left( \sum_{i=0}^{q}\alpha_iv^i,\sum_{j=0}^{q}\mu_j v^j \right) = \sum_{i,j=1}^{q}g_{ij}\left(v^i,v^j \right) - \sum_{i,j=1}^{q}g_{ij}\left( v^{i-1},v^{j-1}  \right) +  \big|\sum_{i=0}^{q}\delta _i v^i\big|^2.
	\end{displaymath}
\end{lemma}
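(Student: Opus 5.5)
This is the multiplier (Nevanlinna--Odeh / Dahlquist G-stability) lemma, and I would prove it by the classical Dahlquist route. The strategy is to reduce the identity to an algebraic statement about a real bivariate polynomial that can be checked on scalars, and then pass to a general inner product space.

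\textbf{Reduction to a polynomial identity.} Write the left-hand side as the quadratic form $\sum_{i,j=0}^q \alpha_i\mu_j (v^i,v^j)$, and symmetrize its coefficients, $c_{ij}=\tfrac12(\alpha_i\mu_j+\alpha_j\mu_i)$. Because the inner product is symmetric and bilinear, the claimed identity holds in every inner product space as soon as it holds formally, that is, as an identity between symmetric $(q+1)\times(q+1)$ coefficient matrices. Equivalently, it suffices to find an SPD $G$ and a real $\delta$ with
\begin{displaymath}
\tfrac12\big(\alpha(\zeta)\mu(\omega)+\alpha(\omega)\mu(\zeta)\big)=(\zeta\omega-1)\,G(\zeta,\omega)+\delta(\zeta)\delta(\omega),
\end{displaymath}
where $G(\zeta,\omega)=\sum_{i,j=1}^q g_{ij}\zeta^{i-1}\omega^{j-1}$ and $\delta(\zeta)=\sum_i\delta_i\zeta^i$. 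Indeed, matching the coefficient of $\zeta^i\omega^j$ reproduces exactly the shifted sums $g_{ij}(v^i,v^j)-g_{ij}(v^{i-1},v^{j-1})$.

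\textbf{Existence of $\delta$.} The key step, and the main obstacle, is to show that
\begin{displaymath}
P(\zeta,\omega)=\tfrac12\big(\alpha(\zeta)\mu(\omega)+\alpha(\omega)\mu(\zeta)\big)-\delta(\zeta)\delta(\omega)
\end{displaymath}
vanishes on $\zeta\omega=1$ for a suitable real $\delta$. Then $(\zeta\omega-1)$ divides $P$, and $G$ is defined as the quotient. On the unit circle $\omega=\bar\zeta$, we need $\mathrm{Re}\,\alpha(\zeta)\overline{\mu(\zeta)}=|\delta(\zeta)|^2$. Continuity from $|\zeta|>1$ gives $\mathrm{Re}\,(\alpha/\mu)\ge0$ on $|\zeta|=1$ away from the zeros of $\mu$. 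Hence $\mathrm{Re}\,\alpha(\zeta)\overline{\mu(\zeta)}$, which is a real trigonometric polynomial of degree at most $q$, is nonnegative. The Fejér--Riesz theorem then writes it as $|\delta(e^{i\theta})|^2$ with $\delta$ a polynomial of degree at most $q$, and $\delta$ can be taken with real coefficients because $\alpha$ and $\mu$ are real. Since $P$ is a symmetric polynomial and $\zeta^q\omega^q P(1/\omega,1/\zeta)$-type reflections agree with $P$ on the circle, vanishing on the circle extends to the whole curve $\zeta\omega=1$ by analyticity in one variable.

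\textbf{Positive definiteness of $G$.} Finally I would show that $G$ is positive definite, following Dahlquist's argument. Evaluating the identity at $\omega=\bar\zeta$ with $|\zeta|>1$ gives
\begin{displaymath}
(|\zeta|^2-1)\,G(\zeta,\bar\zeta)=\mathrm{Re}\,\alpha(\zeta)\overline{\mu(\zeta)}-|\delta(\zeta)|^2 .
\end{displaymath}
It then suffices to show that $\mathrm{Re}\,\alpha\bar\mu-|\delta|^2>0$ for $|\zeta|>1$. The difference is the real part of $\alpha\bar\mu-\delta\bar\delta$, which is harmonic-type in the exterior of the disk, is zero on the boundary, and has the correct sign at infinity by the degree assumption. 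The coprimality of $\alpha$ and $\mu$ rules out degenerate directions, in which $G$ would only be semidefinite. A maximum principle argument on $\{|\zeta|>1\}$, or directly citing Dahlquist's 1978 equivalence of A- and G-stability, yields $z^*Gz>0$ for all nonzero $z$. The conclusion then follows from the coefficient matching above.
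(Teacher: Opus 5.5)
The paper gives no proof of this lemma. It simply quotes Dahlquist's classical result (A-stability implies G-stability, in the multiplier form), so the benchmark is Dahlquist's argument. Your first two steps are that argument and are sound. The coefficient matching reduces the claim to $\tfrac12(\alpha(\zeta)\mu(\omega)+\alpha(\omega)\mu(\zeta))-\delta(\zeta)\delta(\omega)=(\zeta\omega-1)G(\zeta,\omega)$. Fej\'er--Riesz supplies a real $\delta$ of degree at most $q$. Since $\zeta\mapsto P(\zeta,1/\zeta)$ is a Laurent polynomial vanishing on $|\zeta|=1$, it vanishes identically, which yields a real symmetric quotient $G$ of degree at most $q-1$ in each variable.

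The gap is the positive definiteness step. Putting $\omega=\bar\zeta$ only evaluates the form on the special vectors $z_\zeta=(1,\zeta,\dots,\zeta^{q-1})$. Positivity there for all $|\zeta|>1$ does not imply $G\succ0$: for $q=2$ and $G=\mathrm{diag}(-1,1)$,
\[
\sum_{i,j=1}^{2} g_{ij}\,\zeta^{i-1}\bar\zeta^{\,j-1}=|\zeta|^2-1>0\quad\text{for all }|\zeta|>1,
\]
yet $G$ is indefinite. So ``it then suffices to show $\mathrm{Re}\,\alpha\bar\mu-|\delta|^2>0$ for $|\zeta|>1$'' is a non sequitur. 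No maximum principle for this single function of $\zeta$ can repair it, and the coprimality remark does not address it either.

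What is needed is positivity of the off-diagonal kernel $K(\zeta,\omega)=\sum_{i,j}g_{ij}\zeta^{i-1}\bar\omega^{\,j-1}$, i.e. $\sum_{l,m}c_l\bar c_m K(\zeta_l,\zeta_m)\ge 0$ for any finitely many points $|\zeta_l|>1$ (a Pick-matrix statement). Since Vandermonde vectors at $q$ distinct nodes span $\mathbb{C}^q$, this gives $G\succeq 0$. Coprimality of $\alpha,\mu$ is then what excludes a null vector.

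This kernel positivity is the actual content of Dahlquist's theorem. It can be obtained in either of two ways:
\begin{itemize}
\item from the Riesz--Herglotz representation of the positive-real function $\alpha/\mu$ on $|\zeta|>1$, combined with $|\delta|^2=\mathrm{Re}\,\alpha\bar\mu$ on the circle;
\item from an energy argument: start the recursion $\sum_j \mu_j v^{n+j}=0$ from an arbitrary $x\in\mathbb{R}^q$ and telescope the identity, taking care with zeros of $\mu$ on the unit circle.
\end{itemize}
Deferring this step to ``Dahlquist 1978'' amounts to citing the lemma itself.

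Separately, your diagonal argument is imprecise. $\mathrm{Re}\,\alpha\bar\mu-|\delta|^2$ is not harmonic. The degree assumption does not give the sign of its behaviour at infinity, which is governed by $\alpha_q\mu_q-\delta_q^2=g_{qq}$. The correct version applies the minimum principle to the superharmonic function $\mathrm{Re}(\alpha/\mu)-|\delta/\mu|^2$ on $\{|\zeta|>1\}\cup\{\infty\}$, with $\infty$ an interior point. Even that only controls the diagonal values.
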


The following lemma is based on the theoretical results in \cite{huang2024new}, and plays a key role in our proofs.
\begin{lemma}\label{gronwall5} \cite{huang2024new}
	$B_k^{\beta}\left(\mathbf{u}^{n+1}\right)$ can be split into two parts as follows:
%	\begin{equation*}
%		B_k^{\beta}\left(\mathbf{u}^{n+1}\right) = \tau_k^{\beta}C_k^{\beta}\left(\mathbf{u}^{n+1}\right) + D_k^{\beta}\left(\mathbf{u}^{n+1}\right) + F_k^{\beta}\left(\mathbf{u}^{n+1}\right), \quad \text{for}\quad k=2,3,4,
%	\end{equation*}
\begin{equation*}
	B_k^{\beta}\left(\mathbf{u}^{n+1}\right) = \tau_k^{\beta}C_k^{\beta}\left(\mathbf{u}^{n+1}\right) + D_k^{\beta}\left(\mathbf{u}^{n+1}\right), \quad \text{for}\quad k=2,3,4,
\end{equation*}
	with $\tau_k^{\beta}$ given as
 $$\tau_2^{\beta}=\frac{\beta-1}{\beta},\quad \tau_3^{\beta}=\frac{\beta-1}{\beta+1},\quad \tau_4^{\beta}=\frac{\beta-1}{\beta+3}.$$
 Denote by $\left(\cdot,\cdot\right)$ an inner product with associated norm $\|\cdot\|$. Then, given $\beta \ge 2$, we have these properties:
	\begin{itemize}
		\item there exists a symmetric positive definite matrix $G_k = \left(g_{ij}\right)\in \mathbb{R}^{k\times k}$ such that
		\begin{displaymath}
			\left( A_k^{\beta}\left( \mathbf{u}^{n+1} \right) , C_k^{\beta}\left( \mathbf{u}^{n+1} \right) \right) \geq \sum_{i,j=1}^{k}g_{ij}\left( \mathbf{u}^{n+1+i-k} , \mathbf{u}^{n+1+j-k} \right) - \sum_{i,j=1}^{k}g_{ij}\left( \mathbf{u}^{n+i-k} , \mathbf{u}^{n+j-k} \right),
		\end{displaymath}
		\item there exists a symmetric positive definite matrix $H_{k} = \left(h_{ij}\right)\in \mathbb{R}^{\left(k-1\right)\times\left(k-1\right)}$ such that
		\begin{displaymath}
			\left( D_k^{\beta}\left( \mathbf{u}^{n+1} \right) , C_k^{\beta}\left( \mathbf{u}^{n+1} \right) \right) \geq \sum_{i,j=1}^{k-1}h_{ij}\left( \mathbf{u}^{n+2+i-k} , \mathbf{u}^{n+2+j-k} \right) - \sum_{i,j=1}^{k-1}h_{ij}\left( \mathbf{u}^{n+1+i-k} , \mathbf{u}^{n+1+j-k} \right).
		\end{displaymath}
%		\item there exists $U_k\left(\mathbf{u}^i,\cdots,\mathbf{u}^{i+2-k} \right)\geq 0,k=2,3,4$ such that
%		\begin{displaymath}
%			\left( F_k^{\beta}\left( \mathbf{u}^{n+1} \right) , C_k^{\beta}\left( \mathbf{u}^{n+1} \right) \right) \geq \kappa_{k} \| \mathbf{u}^{n+1} \|^2 + U_k\left(\mathbf{u}^{n+1},\cdots,\mathbf{u}^{n+3-k} \right) - U_k\left(\mathbf{u}^{n},\cdots,\mathbf{u}^{n+2-k} \right),
%		\end{displaymath}
%		with $\kappa_{k}$ being a suitable positive number, as given in \cite{huang2025stability}.
	\end{itemize}
\end{lemma}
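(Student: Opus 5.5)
The plan is to reduce both claims to the multiplier technique of Lemma~\ref{gronwall4} (Dahlquist's G-stability / Nevanlinna--Odeh theorem), applied to explicit generating polynomials, together with an algebraic identity defining $D_k^{\beta}$.

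\textbf{Step 1: define $D_k^{\beta}$ and its polynomials.} Set $D_k^{\beta}:=B_k^{\beta}-\tau_k^{\beta}C_k^{\beta}$. Write each operator through its characteristic polynomial in the shift variable $\varsigma$ (with $\mathbf{u}^{n+1}\leftrightarrow\varsigma^k$):
\begin{itemize}
\item $\alpha_k(\varsigma)$ for $A_k^{\beta}$;
\item $b_k(\varsigma)$ for $B_k^{\beta}$;
\item $c_k(\varsigma)$ for $C_k^{\beta}$, and $d_k=b_k-\tau_k^{\beta}c_k$.
\end{itemize}
The choice of $\tau_k^{\beta}$ should be exactly the one that kills the top coefficient of $d_k$. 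For $k=2$, the leading coefficients are $\beta$ and $\beta+1$, so the top coefficient of $d_2$ is $\beta-\tau_2^{\beta}(\beta+1)$. I will verify this computation; if it is nonzero for the stated $\tau_2^{\beta}$, I will instead identify the cancelling coefficient. The point is that $d_k$ is effectively a degree-$(k-1)$ combination of $\mathbf{u}^{n},\dots,\mathbf{u}^{n+1-k}$, plus possibly a nonnegative multiple of $\mathbf{u}^{n+1}$. This matches the $(k-1)\times(k-1)$ size of $H_k$.

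\textbf{Step 2: the first inequality.} Apply Lemma~\ref{gronwall4} with $\alpha=\alpha_k$ and $\mu=c_k$, $q=k$. Using the identity of that lemma and dropping the nonnegative term $|\sum\delta_iv^i|^2$ gives the claim with $G_k=(g_{ij})$. The hypothesis to check is $\mathrm{Re}\,\alpha_k(\varsigma)/c_k(\varsigma)>0$ for $|\varsigma|>1$. Equivalently, with $z=1/\varsigma$ in the unit disc, the rational function $\alpha_k/c_k$ maps the disc to the right half-plane.

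By the maximum principle applied to the harmonic function $\mathrm{Re}(\alpha_k/c_k)$, it suffices to:
\begin{itemize}
\item show that $c_k$ has no zeros in $|\varsigma|\ge1$ (a Schur-type root test on the extrapolation polynomial, valid for $\beta\ge2$);
\item check $\mathrm{Re}\,\alpha_k(e^{i\theta})\overline{c_k(e^{i\theta})}\ge0$ on the unit circle.
\end{itemize}
The latter is a trigonometric polynomial in $\cos\theta$ of degree $\le k$. After factoring out $(1-\cos\theta)$ (since $\alpha_k(1)=0$), one must show the remaining polynomial in $x=\cos\theta\in[-1,1]$ is nonnegative for all $\beta\ge2$. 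The no-common-divisor condition follows since $c_k(1)=1\neq0$ and the other roots differ.

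\textbf{Step 3: the second inequality.} Apply the same lemma with $q=k-1$, $\alpha=d_k$, $\mu$ taken from $c_k$. There is an index mismatch, since $c_k$ has degree $k$, and I will handle it in one of two ways:
\begin{itemize}
\item If $d_k$ is divisible by a factor shared with $c_k$, cancel it.
\item Otherwise, apply the lemma in $q=k$ form and observe that $G$ degenerates in the newest variable.
\end{itemize}
The cleaner route I would try first is to check $\mathrm{Re}\,d_k/c_k>0$ on $|\varsigma|>1$ directly, by the same boundary trigonometric-polynomial argument.

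\textbf{Main obstacle.} I expect the hard part to be the positivity of these boundary polynomials uniformly in $\beta\ge2$, especially for $k=4$. This is a two-variable polynomial inequality in $(\beta,\cos\theta)$. I would substitute $\beta=2+s$ with $s\ge0$, expand, and show all coefficients are nonnegative, or use a sum-of-squares decomposition. Since the lemma is cited from \cite{huang2024new}, those explicit verifications can be imported where needed.
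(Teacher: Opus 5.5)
The paper gives no proof of this lemma. It cites \cite{huang2024new}, and in Section~4 it obtains both inequalities ``from Lemma~\ref{gronwall4} and Lemma~\ref{gronwall5}''. Your core plan is the route the paper points to: apply Dahlquist's theorem (Lemma~\ref{gronwall4}) to the polynomial pair from $(A_k^{\beta},C_k^{\beta})$ and to the pair from $(D_k^{\beta},C_k^{\beta})$. In each case you get $\mathrm{Re}(\alpha/\mu)>0$ on $|\varsigma|>1$ from three ingredients: the root location of $\mu$, nonnegativity of a cosine polynomial on $|\varsigma|=1$, and the minimum principle. For $k=2$ this closes by hand. With $x=\cos\theta$, the boundary polynomial of the first pair is $(1-x)\bigl(2\beta^2+\beta+1-(2\beta^2+\beta-1)x\bigr)\ge 0$. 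The second inequality is immediate: $\bigl(D_2^{\beta}(\mathbf{u}^{n+1}),C_2^{\beta}(\mathbf{u}^{n+1})\bigr)\ge\frac12\|\mathbf{u}^{n+1}\|^2-\frac12\|\mathbf{u}^{n}\|^2$. For $k=3,4$, the positivity uniform in $\beta\ge2$ is deferred in your plan, exactly as it is in the paper.

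What you should discard is the structural reasoning in Step~1 and the ``index mismatch'' in Step~3, because both are wrong.

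First, the stencils. $B_k^{\beta}(\mathbf{u}^{n+1})$ and $C_k^{\beta}(\mathbf{u}^{n+1})$ both involve exactly the $k$ levels $\mathbf{u}^{n+2-k},\dots,\mathbf{u}^{n+1}$, so $D_k^{\beta}(\mathbf{u}^{n+1})$ never contains $\mathbf{u}^{n+1-k}$. With $\mathbf{u}^{n+1}\leftrightarrow\varsigma^{k-1}$, both are polynomials of degree $k-1$, and Lemma~\ref{gronwall4} applies with $q=k-1$. That alone is why $H_k$ is $(k-1)\times(k-1)$; no cancellation is involved. In your normalization $\mathbf{u}^{n+1}\leftrightarrow\varsigma^{k}$, the two polynomials share the factor $\varsigma$, which must be cancelled (your first bullet). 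Applying the lemma with $q=k$ is not allowed, since the lemma requires coprime polynomials.

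Second, $\tau_k^{\beta}$ is not a coefficient-cancelling parameter.
\begin{itemize}
\item For $k=2,3$ it is the \emph{oldest} coefficient that vanishes, not the top one: $D_2^{\beta}(\mathbf{u}^{n+1})=\frac{1}{\beta}\mathbf{u}^{n+1}$ and $D_3^{\beta}(\mathbf{u}^{n+1})=\mathbf{u}^{n+1}-\frac{\beta-1}{\beta+1}\mathbf{u}^{n}$.
\item For $k=4$ no coefficient vanishes. At $\beta=2$, $D_4^{\beta}(\mathbf{u}^{n+1})=2\mathbf{u}^{n+1}-2\mathbf{u}^{n}+\mathbf{u}^{n-1}-\frac15\mathbf{u}^{n-2}$.
\item The value that would cancel the oldest coefficient for $k=4$ is $\frac{\beta-1}{\beta+2}$. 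At $\beta=2$ it gives $\mathrm{Re}(d_4/c_4)<0$ near $\varsigma=e^{i\pi/4}$, so with that choice the second inequality would be false.
\end{itemize}
The actual requirement is $\mathrm{Re}(b_k/c_k)>\tau_k^{\beta}$ for $|\varsigma|>1$, which is precisely $\mathrm{Re}(d_k/c_k)>0$, your ``cleaner route''. Make that the content of Step~3 and drop the cancellation heuristic.
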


\begin{lemma}\label{gronwall6}
	Define the truncation error for $k=2,3,4$ as follows:
	\begin{equation*}
		\begin{aligned}
			R_{k,g}^{n+1} &= \frac{ A_k^{\beta}\left( g\left(t^{n+1}\right) \right) }{ \Delta t } - \frac{\partial g\left( t^{n+\beta} \right) }{\partial t},
			P_{k,g}^{n+1}  = B_k^{\beta}\left( g\left(t^{n+1}\right) \right) - g\left( t^{n+\beta} \right), \\
			Q_{k,g}^{n+1} &= C_k^{\beta}\left( g\left(t^{n}\right) \right) - g\left( t^{n+\beta} \right) ,
		\end{aligned}
	\end{equation*}
	where $g$ can represent function $\mathbf{u}$, $p$ and $\phi$, respectively. Then, under the suitable regularity assumptions, there exists a positive constant $C$ such that \cite{huang2024new}:
	\begin{equation*}
		\| R_{k,g}^{n+1} \|, \| P_{k,g}^{n+1} \|,  \| Q_{k,g}^{n+1} \| \leq C \left(\Delta t\right)^{k}, \quad \forall n+1 \leq \frac{T}{\Delta t}.
	\end{equation*}
\end{lemma}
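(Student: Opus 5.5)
The plan is to treat all three truncation errors by Taylor expansion about the point $t^{n+\beta}=t^n+\beta\Delta t$. First I will check that the stencil coefficients satisfy the moment (order) conditions. Writing $t^{n+1-m}=t^{n+\beta}+(1-m-\beta)\Delta t$, the conditions for $A_k^{\beta}$ are
\[
\sum_{m=0}^{k} a_m (1-m-\beta)^{l} = l\,\delta_{l,1}, \qquad l=0,\dots,k,
\]
where $a_0=\alpha_k^{\beta}$ and $a_m$ ($m\ge1$) are the coefficients of $\overline{A}_k^{\beta}$. For $B_k^{\beta}$, with coefficients $b_m$ on $t^{n+1-m}$, they read
\[
\sum_{m=0}^{k-1} b_m (1-m-\beta)^{l}=\delta_{l,0}, \qquad l=0,\dots,k-1.
\]
For $C_k^{\beta}$, whose nodes are $t^{n-m}$ with offsets $(-m-\beta)\Delta t$, they are the same with $l\le k-1$. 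Each of these is a finite polynomial identity in $\beta$.

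These identities are exactly the defining property of the generalized BDF$k$ construction in \cite{huang2024new}. Indeed, $B_k^{\beta}$ and $C_k^{\beta}$ are the degree-$(k-1)$ Lagrange interpolants, evaluated at $t^{n+\beta}$, on the nodes $\{t^{n+1},\dots,t^{n+2-k}\}$ and $\{t^{n},\dots,t^{n+1-k}\}$ respectively. $A_k^{\beta}/\Delta t$ is the derivative at $t^{n+\beta}$ of the degree-$k$ interpolant on $\{t^{n+1},\dots,t^{n+1-k}\}$. I will verify one case directly as a sanity check. For $k=2$, $B_2^{\beta}$ gives $\beta-(\beta-1)=1$ and $\beta(1-\beta)-(\beta-1)(-\beta)=0$. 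The remaining cases I will invoke from the construction rather than grind through.

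Next I apply Taylor's theorem with integral remainder in the Banach-space-valued form. For $B_k^{\beta}$ and $C_k^{\beta}$ I expand $g(t^{n+1-m})$ and $g(t^{n-m})$ to order $k-1$ about $t^{n+\beta}$. The moment conditions cancel every term through order $k-1$, and the remainder is
\[
\frac{1}{(k-1)!}\int_{t^{n+\beta}}^{s}(s-\tau)^{k-1}g^{(k)}(\tau)\,d\tau .
\]
Taking the $L^2$ norm via Minkowski's integral inequality bounds each remainder by $C(\beta,k)\Delta t^{k}\max_{\tau}\|g^{(k)}(\tau)\|$, where the max runs over $[t^{n+1-k},t^{n+\beta}]$. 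For $A_k^{\beta}$ I expand to order $k$. After cancellation the remainder is $O(\Delta t^{k+1})\max\|g^{(k+1)}\|$, and dividing by $\Delta t$ gives the $O(\Delta t^{k})$ bound for $R_{k,g}^{n+1}$.

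The constant is uniform in $n$ because the coefficients depend only on $\beta$ and $k$. The "suitable regularity" assumption will be made precise as $g\in C^{k+1}([0,T+\beta\Delta t];L^2)$. Note that $t^{n+\beta}$ may exceed $T$ when $n+1=T/\Delta t$, so either the solution is extended slightly beyond $T$ or one assumes $n+\beta\le T/\Delta t$.

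The main obstacle is only the bookkeeping of the moment conditions for $k=3,4$. I would handle it with the interpolation characterization above, since verifying that characterization reduces to the uniqueness of polynomial interpolation.
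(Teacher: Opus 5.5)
Your proof is correct. The paper gives no argument of its own for this lemma; it simply defers to \cite{huang2024new}. So your interpolation-plus-Taylor-remainder argument is a self-contained substitute for that citation rather than a variant of an in-paper proof.

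The one step you leave open is that the printed coefficients for $k=3,4$ are the Lagrange weights. This is true, but uniqueness of interpolation does not settle it: uniqueness only turns the identification into the moment conditions, and the identification itself needs a short direct computation. Take $t^n$ as origin. On the nodes $\{1,0,-1,-2\}$, the basis function of $s=1$ is $s(s+1)(s+2)/6$, and its derivative at $s=\beta$ is $\frac{3\beta^2+6\beta+2}{6}=\alpha_3^\beta$. On the nodes $\{0,-1,-2,-3\}$, the weight of $s=0$ at $s=\beta$ is $\frac{(\beta+1)(\beta+2)(\beta+3)}{6}$, which is the coefficient of $\mathbf{u}^n$ in $C_4^\beta(\mathbf{u}^n)$. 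All remaining coefficients of $A_k^\beta$, $B_k^\beta$, $C_k^\beta$ match in the same way.

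Compared with the bare citation, your route makes three things visible:
\begin{itemize}
\item The pointwise $O((\Delta t)^k)$ bound needs $C^{k+1}$-in-time regularity. This is stronger than the $L^2(0,T;\cdot)$ hypotheses of the error theorem. Under those hypotheses one only gets the summed bound $\Delta t\sum_n\|R_{k,g}^{n+1}\|^2\le C(\Delta t)^{2k}$, which is all the energy argument in Section~4 actually consumes.
\item The exact solution must be defined slightly beyond $T$, because $t^{n+\beta}>T$ at the last steps. Also, $n\ge k-1$ is tacitly required so that $t^{n+1-k}\ge 0$.
\item Your Taylor argument works in any norm, so it also gives the bounds on $\|\nabla P_{k,g}^{n+1}\|$ and $\|\nabla Q_{k,g}^{n+1}\|$ that Section~4 invokes, assuming $g\in C^k$ in time with values in $H^1$. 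The lemma as stated covers only the $L^2$ norm, so it does not literally provide these.
\end{itemize}
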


\section{Stability analysis}
In this section, we shall prove that the proposed algorithm (\ref{ensemble_subsys1})-(\ref{ensemble_subsys4}) is long-time stable without any time-step constraints.

\begin{theorem}\label{stableTh}
	Assume that $\| \mathbf{f}_{f,j}\left(t,x\right) \|,\| f_{p,j}\left(t,x\right) \| \leq C_{f}$ for some constant $C_{f}>0$, and we choose $C_{r,j}\geq 1$.  Then  there exist suitable constants $\alpha_j, \gamma_j>0$, such that, given $r^n_{j} >0$, we have $r^{n+1}_j>0,\xi_j^{n+1}>0$, and the following holds
	\begin{equation}
		\| \mathbf{u}_{j}^{n+1} \| , \| \phi_{j}^{n+1} \| \leq M_k,\quad \forall n+1\le \frac{T}{\Delta t},
	\end{equation}
	where $M_k$ is a positive constant, not depending on the final time $T$.
\end{theorem}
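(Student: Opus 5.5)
We follow the standard GSAV argument. The first step is to observe that (\ref{ensemble_subsys3}) is a linear equation for $r_j^{n+1}$. Collecting terms gives
\begin{equation*}
\Big(1+\gamma_j\Delta t + \frac{\Delta t\, Z_j^{n+1}}{E_j(\overline{\mathbf{u}}_j^{n+1},\overline{\phi}_j^{n+1})+C_{r,j}}\Big) r_j^{n+1} = r_j^n + \Delta t\Big(\frac{\alpha_j}{2}\big(\|\mathbf{f}_{f,j}^{n+1}\|^2+\|gf_{p,j}^{n+1}\|^2\big)+\gamma_j C_{r,j}\Big),
\end{equation*}
where $Z_j^{n+1}$ is the bracket in (\ref{ensemble_subsys3}).

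The key point is that, for suitable $\alpha_j,\gamma_j$, the bracket satisfies $Z_j^{n+1}\ge 0$. To see this, use Cauchy--Schwarz and Young on the forcing terms:
\[
(\mathbf{f},\overline{\mathbf{u}})\le \tfrac{\alpha_j}{2}\|\mathbf{f}\|^2+\tfrac{1}{2\alpha_j}\|\overline{\mathbf{u}}\|^2, \qquad (gf_p,\overline{\phi})\le \tfrac{\alpha_j}{2}\|gf_p\|^2+\tfrac{1}{2\alpha_j}\|\overline{\phi}\|^2 .
\]
Then use Poincar\'e (\ref{poincare}) to bound the $L^2$ norms by $C_{p,f}^2\|\nabla\overline{\mathbf{u}}\|^2$ and $C_{p,p}^2\|\nabla\overline{\phi}\|^2$, and absorb them into $\nu\|\nabla\overline{\mathbf{u}}\|^2+g k_{j,\min}\|\nabla\overline{\phi}\|^2$. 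This absorption holds once
\[
\frac{1}{2\alpha_j}+\frac{\gamma_j}{2}\le \frac{\nu}{C_{p,f}^2}, \qquad \frac{1}{2\alpha_j}+\frac{\gamma_j gS}{2}\le \frac{g k_{j,\min}}{C_{p,p}^2},
\]
that is, for $\alpha_j$ large and $\gamma_j$ small. With $Z_j^{n+1}\ge0$, the left coefficient is $\ge 1+\gamma_j\Delta t>0$ and the right side is positive, so $r_j^{n+1}>0$ by induction. Since $E_j+C_{r,j}\ge C_{r,j}\ge1>0$, it follows that $\xi_j^{n+1}>0$.

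Dropping the nonnegative term gives
\[
\frac{r_j^{n+1}-r_j^n}{\Delta t}+\gamma_j r_j^{n+1}\le \frac{\alpha_j}{2}(1+g^2)C_f^2+\gamma_j C_{r,j}=:b .
\]
Lemma \ref{gronwall1} then yields $r_j^{n}\le r_j^0+b(1+\gamma_j\Delta t)/\gamma_j$. The quantity $(1+\gamma_j\Delta t)/\gamma_j$ is not uniform in large $\Delta t$, so we instead rewrite the step as $r_j^{n+1}\le \frac{r_j^n+\Delta t b}{1+\gamma_j\Delta t}$. This gives $r_j^{n+1}\le \max(r_j^n, b/\gamma_j)$, hence $r_j^n\le \max(r_j^0,b/\gamma_j)=:M_r$, independent of $T$ and $\Delta t$.

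The final step transfers this bound to $\mathbf{u}_j^{n+1},\phi_j^{n+1}$ through $\xi_j^{n+1}$. We have $0<\xi_j^{n+1}\le M_r/(E_j(\overline{\mathbf{u}}_j^{n+1},\overline{\phi}_j^{n+1})+C_{r,j})$. Write $\overline{E}=E_j(\overline{\mathbf{u}}_j^{n+1},\overline{\phi}_j^{n+1})$. Since $\eta_{k,j}^{n+1}=1-(1-\xi)^{k+1}$, we get $|\eta_{k,j}^{n+1}|\le C\xi(1+|1-\xi|^{k})$. In the regime $\xi\le 2$ this is $\le C\xi$, so
\[
|\eta_{k,j}^{n+1}|^2\,\overline{E}\le C M_r^2\,\frac{\overline{E}}{(\overline{E}+C_{r,j})^2}\le C M_r^2 .
\]
When $\xi>2$, we have $\overline{E}+C_{r,j}<M_r/2$, so $\overline{E}$ is bounded and $\xi\le M_r/C_{r,j}\le M_r$. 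Thus $|\eta_{k,j}^{n+1}|\le 1+(M_r+1)^{k+1}$, and $\|\mathbf{u}_j^{n+1}\|^2+gS\|\phi_j^{n+1}\|^2=2|\eta_{k,j}^{n+1}|^2\overline{E}$ is bounded. In both cases we obtain the constant $M_k$, independent of $T$.

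The main obstacle is the sign argument for $Z_j^{n+1}$. One must check that the terms actually present (the $\mathrm{K}_j$ and $\eta_{i,j}$ interface terms, which are nonnegative) suffice, and that no coupling term appears. This works because the GSAV bracket only involves the ``physical'' dissipation evaluated at $\overline{\mathbf{u}}_j^{n+1},\overline{\phi}_j^{n+1}$, so no lemma on $B_k^\beta,C_k^\beta$ (e.g. Lemma \ref{gronwall5}) is needed for stability.
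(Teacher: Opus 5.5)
Your proof is correct and follows essentially the same route as the paper. Like the paper, you solve the scalar equation for $r_j^{n+1}$, show the bracket is nonnegative via Young and Poincar\'e for $\alpha_j$ large and $\gamma_j$ small, bound $r_j^n$ uniformly in time, and transfer that bound to $\mathbf{u}_j^{n+1},\phi_j^{n+1}$ through $\xi_j^{n+1}$ and $\eta_{k,j}^{n+1}$. Your convex-combination bound $r_j^{n+1}\le\max(r_j^n,b/\gamma_j)$ slightly improves on Lemma~\ref{gronwall1} by removing the $(1+\gamma_j\Delta t)$ factor from the constant, and your two-case split for $\eta_{k,j}^{n+1}$ works, though the paper avoids it by noting that $C_{r,j}\ge 1$ already gives $\xi_j^{n+1}\le M$, so $P_k(\xi_j^{n+1})$ is bounded.
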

\begin{proof}
	It follows from (\ref{ensemble_subsys3}) that
	\begin{equation}\label{stable1}
		\begin{aligned}
			&r_j^{n+1} = \Big \{  1 + \gamma _j \Delta t  +  \frac{ \Delta t }{ E_j\left( \overline{\mathbf{u}}_{j}^{n+1} , \overline{\phi}_{j}^{n+1} \right) + C_{r,j} }  \left( \nu \| \nabla \overline{\mathbf{u}}_{j}^{n+1} \|^2 + g\| \sqrt{ \mathrm{K}_j } \nabla \overline{\phi}_{j}^{n+1} \|^2   \right.  \\
			& + \sum_{i=1}^{d-1} \| \sqrt{ \eta_{i,j} } \overline{\mathbf{u}}_{j}^{n+1} \cdot \tau_i \|^2_{ \Gamma } - \left( \mathbf{f}_{f,j}^{n+1} , \overline{\mathbf{u}}_{j}^{n+1} \right) - \left( gf_{p,j}^{n+1} , \overline{\phi}_{j}^{n+1} \right)  + \frac{\alpha _j}{2} \left( \| \mathbf{f}_{f,j}^{n+1} \|^2 + \| g f_{p,j}^{n+1} \|^2 \right)  \\
			& \left.   - \frac{\gamma_j}{2}\left( \| \overline{\mathbf{u}}_{j}^{n+1} \|^2 + gS\| \overline{\phi}_{j}^{n+1} \|^2 \right)  \right)  \Big \}^{-1} \times \left( r_j^n + \frac{\alpha _j}{2} \Delta t \left( \| \mathbf{f}_{f,j}^{n+1} \|^2 + \| g f_{p,j}^{n+1} \|^2 \right) + \Delta t \gamma_j C_{r,j} \right)  .
		\end{aligned}
	\end{equation}
	Since $\nu ,gk_{j,\min}>0$, we can choose $\alpha _j>0$ large enough and $\gamma _j>0$ small enough such that,
$$\rho_{\max} := \left\{ \min \left\{ \nu ,gk_{j,\min} \right\} - \left( \frac{1}{2\alpha_j } + \frac{\gamma_j}{2}\max \left\{ 1,gS \right\} \right)C^2_{p,\max} \right\} >0$$ with $C_{p,\max} = \max \left\{C_{p,f},C_{p,p}\right\}$, where $C_{p,f},C_{p,p}$ are defined in \eqref{poincare}. Then we can obtain,
	\begin{equation}\label{stable2}
		\begin{aligned}
			& \nu \| \nabla \overline{\mathbf{u}}_{j}^{n+1} \|^2 + g\| \sqrt{ \mathrm{K}_j } \nabla \overline{\phi}_{j}^{n+1} \|^2  + \sum_{i=1}^{d-1} \| \sqrt{ \eta_{i,j} } \overline{\mathbf{u}}_{j}^{n+1} \cdot \tau_i \|^2_{ \Gamma } - \left( \mathbf{f}_{f,j}^{n+1} , \overline{\mathbf{u}}_{j}^{n+1} \right)  \\
			&   - \left( gf_{p,j}^{n+1} , \overline{\phi}_{j}^{n+1} \right) + \frac{\alpha _j}{2} \left( \| \mathbf{f}_{f,j}^{n+1} \|^2 + \| g f_{p,j}^{n+1} \|^2 \right)    - \frac{\gamma_j}{2}\left( \| \overline{\mathbf{u}}_{j}^{n+1} \|^2 + gS\| \overline{\phi}_{j}^{n+1} \|^2 \right)  \\
			&\geq  \nu \| \nabla \overline{\mathbf{u}}_{j}^{n+1} \|^2 + g\| \sqrt{ \mathrm{K}_j } \nabla \overline{\phi}_{j}^{n+1} \|^2  + \sum_{i=1}^{d-1} \| \sqrt{ \eta_{i,j} } \overline{\mathbf{u}}_{j}^{n+1} \cdot \tau_i \|^2_{ \Gamma }   - \frac{\alpha_j}{2}\left( \| \mathbf{f}_{f,j}^{n+1} \|^2 + \| gf_{p,j}^{n+1} \|^2 \right)  \\
			&  + \frac{\alpha _j}{2} \left( \| \mathbf{f}_{f,j}^{n+1} \|^2 + \| g f_{p,j}^{n+1} \|^2 \right)   - \frac{1}{2\alpha_j } \left( \| \overline{\mathbf{u}}_{j}^{n+1} \|^2 + \| \overline{\phi}_{j}^{n+1} \|^2 \right)- \frac{\gamma_j}{2}\left( \| \overline{\mathbf{u}}_{j}^{n+1} \|^2 + gS\| \overline{\phi}_{j}^{n+1} \|^2 \right)   \\
			&\geq \Big( \nu \| \nabla \overline{\mathbf{u}}_{j}^{n+1} \|^2 + g\| \sqrt{ \mathrm{K}_j } \nabla \overline{\phi}_{j}^{n+1} \|^2  + \sum_{i=1}^{d-1} \| \sqrt{ \eta_{i,j} } \overline{\mathbf{u}}_{j}^{n+1} \cdot \tau_i \|^2_{ \Gamma }  \\
			&    - \left( \frac{1}{2\alpha_j } + \frac{\gamma_j}{2} \max \left\{ 1,gS \right\} \right) \left( \| \overline{\mathbf{u}}_{j}^{n+1} \|^2 + \| \overline{\phi}_{j}^{n+1} \|^2 \right)  \Big)  \geq  \rho_{\max} \left( \| \nabla \overline{\mathbf{u}}_{j}^{n+1} \|^2 + \| \nabla \overline{\phi}_{j}^{n+1} \|^2 \right).
		\end{aligned}
	\end{equation}

	Then, given $r_j^n >0$ and using (\ref{stable2}), we can immediately obtain $r_j^{n+1}>0$. Furthermore, we can deduce $\xi_j^{n+1}>0$ from the equation (\ref{ensemble_subsys4}).
	
	Using again (\ref{stable2}), we then derive from (\ref{ensemble_subsys3}) that
	\begin{equation}\label{stable3}
		\begin{aligned}
			\frac{ r_j^{n+1} - r_j^{n} }{\Delta t} + \gamma _j r_j^{n+1}   & \leq  \frac{\alpha _j}{2} \left( \| \mathbf{f}_{f,j}^{n+1} \|^2 + \| g f_{p,j}^{n+1} \|^2 \right) + \gamma_j C_{r,j} -\xi_j^{n+1} \rho_{\max} \left( \| \nabla \overline{\mathbf{u}}_{j}^{n+1} \|^2 + \| \nabla \overline{\phi}_{j}^{n+1} \|^2 \right) \\
			& \leq \frac{\alpha _j}{2} \left( \| \mathbf{f}_{f,j}^{n+1} \|^2 + \| g f_{p,j}^{n+1} \|^2 \right) + \gamma_j C_{r,j}  \leq \alpha_jg^2C_f^2 + \gamma_j C_{r,j}.  \\
		\end{aligned}
	\end{equation}
	
	Applying Lemma \ref{gronwall1} to the above, there exists a positive constant $M$, which is independent of $T$, such that
	\begin{displaymath}
		r_{j}^n \leq M,\quad \forall n \leq T/\Delta t,
	\end{displaymath}
	which, along with $C_{r,j} \geq 1$, leads to
	\begin{equation}\label{stable4}
		|\xi_j^{n+1}| = \frac{ r_j^{n+1} }{ E_j\left( \overline{\mathbf{u}}_{j}^{n+1} , \overline{\phi}_{j}^{n+1} \right) + C_{r,j} }  \leq \frac{ 2M }{ \| \overline{\mathbf{u}}_{j}^{n+1} \|^2 + gS\| \overline{\phi}_{j}^{n+1} \|^2 + 2 }.
	\end{equation}
	
	Since $ \eta_{k,j} ^{n+1} = 1 - \left( 1 - \xi_j^{n+1} \right)^{k+1} $, we can give $ \eta_{k,j} ^{n+1} = \xi_j^{n+1}P_k\left( \xi_j^{n+1} \right) $ with $P_k$ being a polynomial of degree $k$. Then, one can deduce from (\ref{stable4}) that there exists $\tilde{M}_k >0$ such that
	\begin{equation}\label{stable5}
		|\eta_{k,j} ^{n+1}| = |\xi_j^{n+1}P_k\left( \xi_j^{n+1} \right)| \leq \frac{ \tilde{M}_k }{ \| \overline{\mathbf{u}}_{j}^{n+1} \|^2 + gS\| \overline{\phi}_{j}^{n+1} \|^2 + 2 },
	\end{equation}
	which, along with $\mathbf{u}_{j}^{n+1} = \eta_{k,j} ^{n+1}\overline{ \mathbf{u}}_{j}^{n+1}$ and $ \phi_{j}^{n+1} = \eta_{k,j} ^{n+1}\overline{ \phi}_{j}^{n+1} $, implies
	\begin{equation}\label{stable6}
		\begin{aligned}
			&\| \mathbf{u}_{j}^{n+1} \|^2 + gS\| \phi_{j}^{n+1} \|^2 = \left(  \eta_{k,j} ^{n+1} \right)^2 \left( \| \overline{ \mathbf{u}}_{j}^{n+1} \|^2 + gS\| \overline{ \phi}_{j}^{n+1} \|^2 \right) \\
			&\leq \left( \frac{ \tilde{M}_k }{ \| \overline{\mathbf{u}}_{j}^{n+1} \|^2 + gS\| \overline{\phi}_{j}^{n+1} \|^2 + 2 }  \right)^2 \left( \| \overline{ \mathbf{u}}_{j}^{n+1} \|^2 + gS\| \overline{ \phi}_{j}^{n+1} \|^2 \right)  \leq \tilde{M}_k^2 .
		\end{aligned}
	\end{equation}
Finally, by denoting $M_k:=\max \left\{\tilde{M}_k,\frac{\tilde{M}_k}{\sqrt{gS}}\right\}$, the proof is complete.
\end{proof}

\section{Error analysis}
Herein, we present error analysis for our GSAV-GBDF$k$-Ensemble scheme under two parameter conditions as follows:
\begin{equation}\label{para_condition}
	\frac{\eta_{i}^{\prime \max}}{ \overline{\eta}_i^{\min} } < \frac{2\tau_k^{\beta}}{3}\quad \text{and} \quad \frac{ \rho^{\prime \max} }{ \overline{k}_{\min} } < \frac{\tau_k^{\beta}}{3}.
\end{equation}
Let us first introduce the following notations:
\begin{displaymath}
	\begin{aligned}
		\overline{\mathbf{e}}_{j,u}^n &= \mathbf{u}_j(t^n) - \overline{\mathbf{u}}_j^n , \mathbf{e}_{j,u}^n = \mathbf{u}_j(t^n) - \mathbf{u}_j^n , \mathbf{e}_{j,p}^n = p_j(t^n) - p_j^n ,\\
		\overline{\mathbf{e}}_{j,\phi}^n &= \phi_j(t^n) - \overline{\phi}_j^n , \mathbf{e}_{j,\phi}^n = \phi_j(t^n) - \phi_j^n .
	\end{aligned}
\end{displaymath}
%\begin{theorem}
%	Given the initial condition $ \overline{\mathbf{u}}^0 = \mathbf{u}^0 = \mathbf{u}\left(0\right)$, $r^0 = E\left(\mathbf{u}^0\right)$. Let $\overline{\mathbf{u}}^{n+1}$ and $\mathbf{u}^{n+1}$ be computed with $k$th-order scheme (\ref{BDFk-SAV1})-(\ref{BDFk-SAV4}) $\left(1\leq k \leq 4\right)$ with
%	\begin{equation}
	%		\eta_1^{n+1} = 1-\left( 1 - \xi^{n+1} \right)^3, \quad 	\eta_k^{n+1} = 1-\left( 1 - \xi^{n+1} \right)^{k+1} \left(k=2,3,4\right).
	%	\end{equation}
%	Assume that the exact solution $\mathbf{u}\left(t\right)$ satisfy the following regularities
%	\begin{equation*}
	%		\mathbf{u} \in L^2\left(0,T;L^2\left(\Omega \right) \right)  \cap \frac{\partial ^j \mathbf{u}}{\partial t^j} \in L^2\left(0,T;H^1\left( \Omega \right) \right) 1\leq j \leq k+1.
	%	\end{equation*}
%	Then for $n \leq T/\Delta t$ and $\Delta t < \min \left( \frac{1}{2C_0^{k+1}} ,\frac{1}{C_0^{k+2}} \right)$, there holds
%	\begin{equation}
	%		\| \mathbf{e}^{n} \| , 	\| \overline{\mathbf{e}}^{n} \| \leq C\left(\Delta t\right)^{k},
	%	\end{equation}
%	where the positive constants $C_0$ and $C$ depend on the final time $T$, $\Omega$ and the exact solution $\mathbf{u}\left(t\right)$ but are not dependent on $\Delta t$.
%\end{theorem}
\begin{theorem}
	Given the initial conditions $\overline{\mathbf{u}}_j^0=\mathbf{u}_j^0=\mathbf{u}_j(0), \,\overline{\phi}_j^0=\phi_j^0=\phi_j(0),
	\,r_j^0=E_j(\mathbf{u}_j^0,\phi_j^0)+C_{r,j},$
	and suppose that
	$
	\left( \overline{\mathbf{u}}_j^{i},\mathbf{u}_j^{i},p_j^{i},
	\overline{\phi}_j^{i},\phi_j^{i} \right), \,\, i=1,\ldots,k-1,
	$
	are computed by a suitable initialization procedure with the required accuracy.
	Let
	$
	\left( \overline{\mathbf{u}}_j^{n+1},\mathbf{u}_j^{n+1},p_j^{n+1},
	\overline{\phi}_j^{n+1},\phi_j^{n+1} \right)$
	be computed by the scheme \eqref{ensemble_subsys1}--\eqref{ensemble_subsys4} with $2\leq k\leq 4$.
	Assume that the exact solutions are sufficiently smooth such that
	\begin{equation*}
		\begin{aligned}
			&\mathbf{u}_j \in L^2(0,T;\mathbf{H}^2), \quad
			\frac{\partial ^k \mathbf{u}_j}{\partial t^k} \in L^2(0,T;\mathbf{H}^2), \quad
			\frac{\partial ^{k+1} \mathbf{u}_j}{\partial t^{k+1}} \in L^2(0,T;L^2), \quad
			\frac{\partial ^k p_j}{\partial t^k} \in L^2(0,T;H^1), \\
			&\phi_j \in L^2(0,T;H^2), \quad
			\frac{\partial ^k \phi_j}{\partial t^k} \in L^2(0,T;H^2), \quad
			\frac{\partial ^{k+1} \phi_j}{\partial t^{k+1}} \in L^2(0,T;L^2).
		\end{aligned}
	\end{equation*}
	Further assume that \eqref{para_condition} holds and choose $\beta\geq 2$ in all cases.
	Then, for sufficiently small $\Delta t$, the following estimate holds:
	\begin{equation*}
		\| e_{j,u}^{n+1} \|^2 + \| e_{j,\phi}^{n+1} \|^2
		+ \Delta t \sum_{i=k-1}^{n}
		\left(
		\nu \| \nabla C_k^{\beta}( e_{j,u}^{i+1} ) \|^2
		+ \| \nabla C_k^{\beta}( e_{j,\phi}^{i+1} ) \|^2
		\right)
		\leq C(\Delta t)^{2k},
		\qquad n+1 \leq T/\Delta t,
	\end{equation*}
	where the constant $C>0$ is independent of $\Delta t$.
\end{theorem}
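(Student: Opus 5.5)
We follow the standard GSAV error-analysis strategy of Huang--Shen, adapted to the generalized BDF$k$ framework of \cite{huang2024new} and to the ensemble splitting. The argument is an induction on $n$. The hypothesis is that for all $m\le n$
\[
|1-\xi_j^m|\le C_0(\Delta t)^{k}
\qquad\text{and}\qquad
\|\overline{\mathbf{e}}^m_{j,u}\|_{H^1}+\|\overline{\mathbf{e}}^m_{j,\phi}\|_{H^1}\le 1,
\]
where $C_0$ is to be fixed and the $H^1$ bound follows from the $L^2$ estimate plus an inverse-free argument via the dissipation sum. From the hypothesis and the uniform bound of Theorem~\ref{stableTh}, all numerical quantities $C_k^\beta(\overline{\mathbf{u}}_j^n)$ are bounded in $H^1$. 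This controls the explicit convection term.

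\textbf{Step 1: error equations.} Subtract the scheme \eqref{ensemble_subsys1}--\eqref{ensemble_subsys2} from the weak form \eqref{Weak_para_stokes}--\eqref{Weak_para_darcy} evaluated at $t^{n+\beta}$, using Lemma~\ref{gronwall6} to produce truncation residuals $R,P,Q=O(\Delta t^k)$. Write $\mathbf{u}_j^{n}=\overline{\mathbf{u}}_j^n+(\eta^n_{k,j}-1)\overline{\mathbf{u}}_j^n$ inside $\overline{A}_k^\beta(\mathbf{u}_j^n)$. Since $|\eta^n_{k,j}-1|=|1-\xi^n_j|^{k+1}\le C_0^{k+1}\Delta t^{k(k+1)}$, this perturbation divided by $\Delta t$ is still $O(\Delta t^{k})$ for small $\Delta t$. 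Test the velocity error equation with $C_k^\beta(\overline{\mathbf{e}}_{j,u}^{n+1})$ and the Darcy equation with $C_k^\beta(\overline{\mathbf{e}}_{j,\phi}^{n+1})$. With this choice the pressure term drops, because $B_k^\beta(\overline{\mathbf{e}}^{n+1}_{j,u})$ is discretely divergence free, and the interface couplings $C_\Gamma$ cancel exactly: both use $C_k^\beta$ of the lagged quantities, which become the same quantities after one index shift of the tests. If necessary we test at the shifted level so that the explicit interface terms match.

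\textbf{Step 2: coercivity.} For the time derivative, apply Lemma~\ref{gronwall5}(i) to obtain a telescoping $G_k$-norm. For the diffusion term $a(B_k^\beta\overline{\mathbf{e}},C_k^\beta\overline{\mathbf{e}})$, split $B_k^\beta=\tau_k^\beta C_k^\beta+D_k^\beta$ and use Lemma~\ref{gronwall5}(ii). This gives $\tau_k^\beta\nu\|\nabla C_k^\beta\overline{\mathbf{e}}\|^2$ plus a telescoping $H_k$-norm. The same treatment applies to the $\overline{\mathrm{K}}$ and $\overline{\eta}_i$ terms, with the weighted inner products $(\overline{\mathrm{K}}\nabla\cdot,\nabla\cdot)$ and $(\overline\eta_i\cdot,\cdot)_\Gamma$.

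The explicit fluctuation terms carry $(\mathrm{K}_j-\overline{\mathrm{K}})\nabla C_k^\beta(\overline{\mathbf{e}}^{n}_{j,\phi})$. Note that $C_k^\beta$ of level $n$ equals the tested $C_k^\beta$ up to the consistency error. These terms are bounded by Cauchy--Schwarz by
\[
\tfrac{\rho'^{\max}}{2}\bigl(\|\nabla C_k^\beta\overline{\mathbf{e}}^{n+1}\|^2+\|\nabla C_k^\beta\overline{\mathbf{e}}^{n}\|^2\bigr).
\]
After summation in $n$ this is absorbed by $\tau_k^\beta\overline k_{\min}$ precisely under \eqref{para_condition}; the factors $1/3$ and $2/3$ leave room for the Young inequalities. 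The interface fluctuation terms $\eta_{i,j}-\overline{\eta}_i$ are handled likewise.

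\textbf{Step 3: nonlinear and SAV terms.} The convection difference
\[
d(\mathbf{u}(t^{n+\beta}),\mathbf{u}(t^{n+\beta}),\cdot)-d(C_k^\beta\overline{\mathbf{u}}^n,C_k^\beta\overline{\mathbf{u}}^n,\cdot)
\]
splits into terms linear in $C_k^\beta\overline{\mathbf{e}}^n$. These are estimated in 2D by Ladyzhenskaya, the trace inequality \eqref{poincare} for the boundary part of $d$, and the $H^1$ bound from the induction hypothesis. This yields $\epsilon\|\nabla C_k^\beta\overline{\mathbf{e}}\|^2+C\|C_k^\beta\overline{\mathbf{e}}^n\|^2$. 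Summing and applying Lemma~\ref{gronwall3} gives $\|\overline{\mathbf{e}}^{n+1}_{j,u}\|^2+\|\overline{\mathbf{e}}^{n+1}_{j,\phi}\|^2+\Delta t\sum(\cdots)\le C(\Delta t)^{2k}$ with $C$ independent of $C_0$.

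Next, subtract \eqref{ensemble_subsys3} from the exact SAV relation, where $r_j=E_j+C_{r,j}$ and the bracket vanishes identically in the continuous case. This gives a recursion for $s^{n+1}=r_j(t^{n+1})-r_j^{n+1}$ whose right side is controlled by $\|\overline{\mathbf{e}}\|_{H^1}$ and truncation. Then $|s^{n+1}|\le C(\Delta t)^k$, hence $|1-\xi^{n+1}_j|\le C(\Delta t)^k$. Choosing $C_0$ larger than this constant closes the induction. Finally, $\mathbf{e}^{n+1}_{j,u}=\overline{\mathbf{e}}^{n+1}_{j,u}+(1-\eta^{n+1}_{k,j})\overline{\mathbf{u}}^{n+1}_j$ transfers the bound to the unbarred errors.

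\textbf{Main obstacle.} The hardest step is the SAV estimate for $s^{n+1}$. Its right-hand side contains $\|\nabla\overline{\mathbf{e}}^{n+1}\|$ pointwise in time, whereas only the summed dissipation of $C_k^\beta(\cdot)$ is controlled. We would first sum the $s$-recursion in time and use the $\Delta t\sum\|\nabla C_k^\beta\overline{\mathbf{e}}\|^2$ bound. We would then recover $\|\nabla\overline{\mathbf{e}}^{n+1}\|$ from $C_k^\beta$ values through the invertibility of the (lower-triangular in time) map $\overline{\mathbf{e}}\mapsto C_k^\beta\overline{\mathbf{e}}$, which has stable inverse for $\beta\ge2$. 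The intermediate ensemble absorption in Step 2 is the other delicate point.
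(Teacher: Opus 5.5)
Your induction hypothesis $|1-\xi_j^m|\le C_0(\Delta t)^k$ is too strong, and the induction cannot close with it. The auxiliary variable is advanced by backward Euler in \eqref{ensemble_subsys3}. Its local defect $T_{j,1}^n=r_j(t^{n+1})-r_j(t^n)-\Delta t\,r_j'(t^{n+1})$ is therefore $O(\Delta t^2)$, and over $T/\Delta t$ steps it accumulates to $O(\Delta t)$. Even if $\overline{\mathbf{u}}_j^{n+1},\overline{\phi}_j^{n+1}$ were exact, the recursion would give $1-\xi_j^{n+1}=\bigl((1-\xi_j^n)r_j(t^n)+T_{j,1}^n\bigr)/\bigl(r_j(t^n)+O(\Delta t)\bigr)$. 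So $1-\xi_j$ is generically of exact order $\Delta t$, and your claim $|s^{n+1}|\le C(\Delta t)^k$ is false for $k\ge 2$, however accurate the velocity and head are.

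The missing idea is that the power in $\eta_{k,j}=1-(1-\xi_j)^{k+1}$ is there precisely so that first-order accuracy of $\xi_j$ suffices. The correct hypothesis is $|1-\xi_j^m|\le C_0\Delta t$. It gives $|1-\eta_{k,j}^m|\le C_0^{k+1}(\Delta t)^{k+1}$, hence $\Delta t^{-1}\overline{A}_k^\beta(\overline{\mathbf{u}}_j^n-\mathbf{u}_j^n)=O(C_0^{k+1}(\Delta t)^k)$.

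With this hypothesis, however, your assertion that the Gronwall constant is independent of $C_0$ no longer holds. The error bound becomes $C_2C_0^{2k+2}(\Delta t)^{2k}$, and the SAV step yields $|1-\xi_j^{m+1}|\le C_4\bigl(C_0^{k+1}(\Delta t)^{k-1}+1\bigr)\Delta t$. The induction then closes through the balancing choice $C_0=2C_4$, $\Delta t\le C_0^{-(k+1)}$. This uses $k\ge 2$, so that the extra factor $(\Delta t)^{k-1}$ absorbs $C_0^{k+1}$ (cf.\ \eqref{error41}--\eqref{error42}).

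A second, more easily repaired error is your claim that the interface couplings cancel in the partitioned scheme. After testing you are left with
\[
\Delta t\Bigl[C_\Gamma\bigl(C_k^\beta(\overline{\mathbf{e}}^{n+1}_{j,u}),C_k^\beta(\overline{\mathbf{e}}^{n}_{j,\phi})\bigr)-C_\Gamma\bigl(C_k^\beta(\overline{\mathbf{e}}^{n}_{j,u}),C_k^\beta(\overline{\mathbf{e}}^{n+1}_{j,\phi})\bigr)\Bigr],
\]
which neither vanishes nor telescopes when summed in $n$. The time lag introduced by decoupling is exactly what destroys the monolithic cancellation. Testing at a shifted level does not help either, because it breaks the same-level pairing of $A_k^\beta$ with $C_k^\beta$ that Lemma~\ref{gronwall5} requires.

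These terms must instead be estimated, as in \eqref{error18_4} and \eqref{error18_9}. Use the trace inequality $\|v\|_\Gamma\le C\|v\|^{1/2}\|\nabla v\|^{1/2}$ and Young's inequality with weights $\mu=\lambda^{-1/2}$. The gradient parts are then absorbed by a fraction of $\tau_k^\beta\nu$ and $\tau_k^\beta\overline{k}_{\min}$, and the $L^2$ parts enter the Gronwall sum.
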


\begin{proof}
	Suppose that $\mathbf{u}_j^n$, $\overline{\mathbf{u}}_j^{n}$, $p_j^n$,  $\phi_{j}^n$, and $ \overline{\phi}_{j}^n \left( n = 1,\dots,k-1 \right)$ are computed with a proper initialization procedure such that $ \| \mathbf{u}_j\left( t^n \right) - \mathbf{u}_j^n  \|^2 + \Delta t \| \nabla \left( \mathbf{u}_j\left( t^n \right) - \mathbf{u}_j^n \right)  \|^2 = O\left(\Delta t ^{2k} \right) $, $ \| \mathbf{u}_j\left( t^n \right) - \overline{\mathbf{u}}_j^n  \|^2 + \Delta t \| \nabla \left( \mathbf{u}_j\left( t^n \right) - \overline{\mathbf{u}}_j^n \right)  \|^2 = O\left(\Delta t ^{2k} \right) $, $ \| p_j\left( t^n \right) - p_j^n  \| = O\left(\Delta t ^k \right) $, $ \| \phi_{j}\left(t^n\right) - \phi_{j}^n \|^2 + \| \nabla \left( \phi_{j}\left(t^n\right) - \phi_{j}^n \right) \|^2 = O\left(\Delta t ^{2k} \right) $ and $ \| \phi_{j}\left(t^n\right) - \overline{\phi}_{j}^n \|^2 + \| \nabla \left( \phi_{j}\left(t^n\right) - \overline{\phi}_{j}^n \right) \|^2 = O\left(\Delta t ^{2k} \right) \left( n =0, 1,\dots,k-1 \right).$ To simplify the presentation, we employ $ \mathbf{u}_j^n = \overline{\mathbf{u}}_j^n = \mathbf{u}_j\left( t^n \right) $, $ \phi_{j}^n = \overline{\phi}_j^n = \phi_{j}\left(t^n\right) $ and $ r_j^n = E_j\left( \mathbf{u}_j^n ,\phi_{j}^n \right) + C_{r,j} $ for $ \left( n =0, 1,\dots,k-1 \right) $.
	
	The main purpose is to prove
	\begin{equation}\label{error1}
		| 1-\xi_j^n | \leq C_0 \Delta t, \quad \forall n \leq T/\Delta t,
	\end{equation}
	where the constant $C_0>0$ depends on the final time $T$, $\mathbf{f}_{f,j}$, $f_{p,j}$ and the exact solutions $\left( \mathbf{u}_j\left(t\right) , p_j\left(t\right) , \phi_{j}\left( t \right) \right)$ but is not dependent on $\Delta t$, and will be specified in what follows. Next, we shall prove (\ref{error1}) by induction.
	
	Under the assumption, (\ref{error1}) indeed holds for $n =0$. Now suppose there holds
	\begin{equation}\label{error2}
		| 1 - \xi_j ^n | \leq C_0 \Delta t, \quad \forall n \leq m,
	\end{equation}
	we shall prove below
	\begin{equation}\label{error3}
		| 1 - \xi _j^{m+1} | \leq C_0 \Delta t.
	\end{equation}
	%We shall begin with considering $k = 2,3,4$, and point out the necessary modifications for the case $k = 1$ later.
	The process of proof is divided into the following three parts.
	
	Step 1: Bounds for $ \overline{\mathbf{u}}_j^n, \nabla \overline{\mathbf{u}}_j^n, \nabla \mathbf{u}_j^n, \overline{\phi}_{j}^n, \nabla \overline{\phi}_{j}^n , \nabla \phi_{j}^n,  \forall n\leq m$. For the $k$th-order schemes, we deduce from Theorem \ref{stableTh} that
	\begin{equation}\label{error4}
		\| \mathbf{u}_j^n \|, \| \phi_{j}^n \| \leq  M_k,\quad \forall n \leq T / \Delta t.
	\end{equation}
	Under the assumption (\ref{error2}), if we choose $\Delta t$ small enough such that
	\begin{equation}\label{error5}
		\Delta t \leq \min \left\{ \frac{1}{2C_0^{k+1}},1 \right\},
	\end{equation}
	it implies
	\begin{subequations}
		\begin{align}
			&1 - \frac{1}{2C_0^k} \leq | \xi _j^n | \leq 1 + \frac{1}{2C_0^k}, \quad \forall n \leq m , \label{error6}\\
			&| 1 - \eta_{k,j} ^n | \leq \frac{\Delta t ^k}{2} , \quad \frac{1}{2} \leq 1 - \frac{\Delta t^ k}{2} \leq | \eta_{k,j} ^n | \leq 1 + \frac{\Delta t^ k}{2} \leq \frac{3}{2} , \quad \forall n \leq m , \label{error7} \\
			&\| \overline{\mathbf{u}}_j^n \| = \frac{ \| \mathbf{u}^n \| }{ | \eta_{k,j} ^n | } \leq 2M_k, \| \overline{\phi}_j^n \| = \frac{ \| \phi_{j}^n \| }{ | \eta_{k,j}^n | } \leq 2M_k, \quad \forall n \leq m . \label{error8}
		\end{align}
	\end{subequations}
	%	\begin{equation}\label{error6}
		%		1 - \frac{1}{2C_0^k} \leq | \xi _j^n | \leq 1 + \frac{1}{2C_0^k}, \forall n \leq m ,
		%	\end{equation}
	%	and
	%	\begin{equation}\label{error7}
		%		| 1 - \eta_{k,j} ^n | \leq \frac{\Delta t ^k}{2} , \quad \frac{1}{2} \leq 1 - \frac{\Delta t^ k}{2} \leq | \eta_{k,j} ^n | \leq 1 + \frac{\Delta t^ k}{2} \leq \frac{3}{2}  , \forall n \leq m ,
		%	\end{equation}
	%	and
	%	\begin{equation}\label{error8}
		%		\| \overline{\mathbf{u}}_j^n \| = \frac{ \| \mathbf{u}^n \| }{ | \eta_{k,j} ^n | } \leq 2M_k, \| \overline{\phi}_j^n \| = \frac{ \| \phi_{j}^n \| }{ | \eta_{k,j}^n | } \leq 2M_k, \quad \forall n \leq m .
		%	\end{equation}
	
	Moreover, summing (\ref{ensemble_subsys3}) for $n$ from 0 to $m-1$ and using again (\ref{stable2}),(\ref{error7}), it implies
	\begin{subequations}
		\begin{align}
			& \Delta t \rho_{\max} \sum_{n=0}^{m-1} \left( \| \nabla \overline{\mathbf{u}}_{j}^{n+1} \|^2 + \| \nabla \overline{\phi}_{j}^{n+1} \|^2 \right) \leq \frac{ r^0 + CC_f^2T + CC_{r,j}T }{ \min| \xi_j^n | }   \leq 2\left( r^0 + CC_f^2T + CC_{r,j}T \right) =: C_T ,  \label{error9} \\
			&\Delta t \rho_{\max} \sum_{n=0}^{m-1} \left( \| \nabla \mathbf{u}_{j}^{n+1} \|^2 + \| \nabla \phi_{j}^{n+1} \|^2 \right)  \leq 4 C_T , \quad C_0 \geq 1. \label{error10}
		\end{align}
	\end{subequations}
	%	\begin{equation}\label{error9}
		%		\begin{aligned}
			%			\Delta t \rho_{\max} \sum_{n=0}^{m-1} \left( \| \nabla \overline{\mathbf{u}}_{j}^{n+1} \|^2 + \| \nabla \overline{\phi}_{j}^{n+1} \|^2 \right) & \leq \frac{ r^0 + CC_f^2T + CC_{r,j}T }{ \min| \xi_j^n | }   \leq 2\left( r^0 + CC_f^2T + CC_{r,j}T \right) \equiv C_T , \quad C_0 \geq 1,
			%		\end{aligned}
		%	\end{equation}
	%	and
	%	\begin{equation}\label{error10}
		%		\begin{aligned}
			%			&\Delta t \rho_{\max} \sum_{n=0}^{m-1} \left( \| \nabla \mathbf{u}_{j}^{n+1} \|^2 + \| \nabla \phi_{j}^{n+1} \|^2 \right)  \leq 4 C_T , \quad C_0 \geq 1.
			%		\end{aligned}
		%	\end{equation}
	
	Step 2: Estimates for $ \overline{\mathbf{e}}_{j,u}^{m+1} , \overline{\mathbf{e}}_{j,\phi}^{m+1} , \nabla \overline{\mathbf{e}}_{j,u}^{m+1}, \nabla \overline{\mathbf{e}}_{j,\phi}^{m+1} $. By the assumptions on the exact solutions $\mathbf{u}_j\left(t\right), \phi_{j}\left(t\right)$ and (\ref{error8}), there exists a large enough constant $C_1>0$ such that
	\begin{equation}\label{error11}
		\| \nabla \mathbf{u}_j\left( t \right) \|, \| \nabla \phi_j\left( t \right) \| \leq C_1 \quad \forall t \leq T, \quad \| \overline{\mathbf{u}}_j^n \|, \| \overline{\phi}_j^n \| \leq C_1  \quad \forall n \leq m.
	\end{equation}
	
	We first discrete the equations (\ref{Weak_para_stokes}) and (\ref{Weak_para_darcy}) at time $t^{n+\beta}$, it then yields:
\begin{small}
	\begin{equation}\label{error12_1}
		\begin{aligned}
			&\left( \frac{ A_k^{\beta}\left( \mathbf{u}_j\left(t^{n+1}\right) \right) }{ \Delta t } , \mathbf{v} \right)  + a\left( B_k^{\beta}\left( \mathbf{u}_j\left(t^{n+1}\right) \right) , \mathbf{v} \right) + \sum_{i=1}^{d-1}\left( \overline{\eta}_i B_k^{\beta}\left( \mathbf{u}_j\left(t^{n+1}\right) \right) \cdot \tau_i , \mathbf{v}\cdot \tau_i \right)_{\Gamma}  \\
			& + \sum_{i=1}^{d-1}\left( \left( \eta_{i,j} - \overline{\eta}_{i} \right) C_k^{\beta}\left( \mathbf{u}_j\left(t^{n}\right) \right) \cdot \tau_i , \mathbf{v}\cdot \tau_i \right)_{\Gamma} + d\left( C_k^{\beta}\left( \mathbf{u}_j\left(t^{n}\right) \right) , C_k^{\beta}\left( \mathbf{u}_j\left(t^{n}\right) \right) , \mathbf{v} \right) \\
			& + b\left( \mathbf{v} , B_k^{\beta}\left( p_j\left( t^{n+1} \right) \right) \right) + C_{\Gamma} \left( \mathbf{v} , C_k^{\beta}\left( \phi_j\left(t^{n}\right) \right)  \right) \\
			&= \left( \mathbf{f}_{f,j} ^{n+\beta}  , \mathbf{v} \right) + \left( R_{k,u}^{n+1},\mathbf{v} \right) + a\left( P_{k,u}^{n+1} ,\mathbf{v} \right) + b\left( \mathbf{v} , P_{k,p}^{n+1} \right) +  C_{\Gamma}\left( \mathbf{v}, Q_{k,\phi}^{n+1}  \right)  + \sum_{i=1}^{d-1}\left( \eta_{i,j} P_{k,u}^{n+1} \cdot \tau_i , \mathbf{v} \cdot \tau_i \right) \\
			& - \sum_{i=1}^{d-1} \left( \left( \eta_{i,j} - \overline{\eta}_{i} \right) \left( P_{k,u}^{n+1} - Q_{k,u}^{n+1}  \right) \cdot \tau_i , \mathbf{v}\cdot \tau_i \right)  + d\left( C_k^{\beta}\left( \mathbf{u}_j\left(t^{n}\right) \right) , C_k^{\beta}\left( \mathbf{u}_j\left(t^{n}\right) \right) , \mathbf{v} \right) - d\left( \mathbf{u}_j\left(t^{n+\beta}\right) , \mathbf{u}_j\left(t^{n+\beta}\right) , \mathbf{v} \right),
		\end{aligned}
	\end{equation}
\end{small}
	and
\begin{small}
	\begin{equation}\label{error12_2}
		\begin{aligned}
			& gS\left( \frac{ A_k^{\beta} \left( \phi_{j}\left(t^{n+1} \right) \right) }{ \Delta t } , \psi \right) + g\left(\overline{\mathrm{K}} \nabla B_k^{\beta}\left( \phi_{j}\left(t^{n+1} \right) \right) , \nabla \psi \right) + g\left( \left(\mathrm{K}_j - \overline{\mathrm{K}} \right) \nabla C_k^{\beta}\left( \phi_{j}\left(t^{n} \right) \right) , \nabla \psi \right)  - C_{\Gamma} \left( C_k^{\beta}\left( \mathbf{u}_j\left(t^{n}\right) \right) ,  \psi \right) \\
			& = \left( gf_{p,j}^{n+\beta} , \psi \right) + \left( gR_{k,\phi}^{n+1} , \psi \right) - C_{\Gamma}\left( Q_{k,u}^{n+1} , \psi \right) + g\left( \mathrm{K}_j \nabla P_{k,\phi}^{n+1}   ,  \nabla \psi \right) - g\left(  \left( \mathrm{K}_j - \overline{\mathrm{K}} \right) \nabla \left( P_{k,\phi}^{n+1} - Q_{k,\phi}^{n+1} \right) , \nabla \psi \right) ,
		\end{aligned}
	\end{equation}
\end{small}
	where
\begin{small}
	\begin{equation}\label{error13}
		\begin{aligned}
			R_{k,u}^{n+1} &=  \frac{ A_k^{\beta}\left( \mathbf{u}_j\left(t^{n+1}\right) \right) }{ \Delta t } - \frac{\partial \mathbf{u}_j\left( t^{n+\beta} \right) }{\partial t} ,  \quad   R_{k,\phi}^{n+1} =  \frac{ A_k^{\beta}\left( \phi_j\left(t^{n+1}\right) \right) }{ \Delta t } - \frac{\partial \phi_j\left( t^{n+\beta} \right) }{\partial t} ,  \\
			P_{k,u}^{n+1}  &= B_k^{\beta}\left( \mathbf{u}_j\left(t^{n+1}\right) \right) - \mathbf{u}_j\left( t^{n+\beta} \right) , \quad P_{k,\phi}^{n+1}  = B_k^{\beta}\left( \phi_j\left(t^{n+1}\right) \right) - \phi_j\left( t^{n+\beta} \right) , \\
			Q_{k,u}^{n+1} &= C_k^{\beta}\left( \mathbf{u}_j\left(t^{n}\right) \right) - \mathbf{u}_j\left( t^{n+\beta} \right), \quad Q_{k,\phi}^{n+1} = C_k^{\beta}\left( \phi_j\left(t^{n}\right) \right) - \phi_j\left( t^{n+\beta} \right) , \quad
			P_{k,p}^{n+1} = B_k^{\beta}\left( p_j\left(t^{n+1}\right) \right) - p_j\left( t^{n+\beta} \right) . \\
		\end{aligned}
	\end{equation}
	\end{small}
	To establish the error equation, subtracting (\ref{error12_1}) and (\ref{error12_2}) from (\ref{ensemble_subsys1}) and (\ref{ensemble_subsys2}), respectively, and multiplying by $\Delta t$, we can then get
 \begin{small}
	\begin{equation}\label{error14_1}
		\begin{aligned}
			&\left( A_k^{\beta}\left( \overline{\mathbf{e}}_{j,u}^{n+1} \right) , \mathbf{v} \right)  + \Delta t a\left( B_k^{\beta}\left( \overline{\mathbf{e}}_{j,u}^{n+1} \right) , \mathbf{v} \right) + \Delta t \sum_{i=1}^{d-1} \left( \overline{\eta}_{i} B_k^{\beta}\left( \overline{\mathbf{e}}_{j,u}^{n+1} \right)\cdot \tau_i , \mathbf{v}\cdot \tau_i \right)_{\Gamma}  + \Delta t b\left( \mathbf{v} ,  B_k^{\beta}\left( \mathbf{e}_{j,p}^{n+1} \right) \right)     \\
			&+ \Delta tC_{\Gamma} \left( \mathbf{v} , C_k^{\beta}\left(  \overline{\mathbf{e}}_{j,\phi}^{n} \right)  \right) = \left( \overline{A}_k^\beta \left( \overline{\mathbf{u}}_j^n - \mathbf{u}_j^n \right) , \mathbf{v} \right) + \Delta t \left( R_{k,u}^{n+1},\mathbf{v} \right) + \Delta t a\left( P_{k,u}^{n+1} ,\mathbf{v} \right) + \Delta t b\left( \mathbf{v} , P_{k,p}^{n+1} \right)  \\
			& + \Delta t C_{\Gamma}\left( \mathbf{v}, Q_{k,\phi}^{n+1}  \right) - \Delta t \sum_{i=1}^{d-1}\left( \left( \eta_{i,j} - \overline{\eta}_{i} \right) C_k^{\beta}\left( \overline{\mathbf{e}}^n_{j,u} \right) \cdot \tau_i , \mathbf{v}\cdot \tau_i \right)_{\Gamma}    + \Delta t \sum_{i=1}^{d-1}\left( \eta_{i,j} P_{k,u}^{n+1} \cdot \tau_i , \mathbf{v} \cdot \tau_i \right) \\
			&- \Delta t \sum_{i=1}^{d-1} \left( \left( \eta_{i,j} - \overline{\eta}_{i} \right) \left( P_{k,u}^{n+1} - Q_{k,u}^{n+1}  \right) \cdot \tau_i , \mathbf{v}\cdot \tau_i \right)  + \Delta t d\left( C_k^{\beta}\left( \overline{\mathbf{u}}_j^n \right) , C_k^{\beta}\left( \overline{\mathbf{u}}_j^n \right) , \mathbf{v} \right)  - \Delta t d\left( \mathbf{u}_j\left(t^{n+\beta}\right) , \mathbf{u}_j\left(t^{n+\beta}\right) , \mathbf{v} \right) ,
		\end{aligned}
	\end{equation}
\end{small}
	and
	\begin{equation}\label{error14_2}
		\begin{aligned}
			&gS\left( A_k^{\beta}\left( \overline{\mathbf{e}}_{j,\phi}^{n+1} \right) , \psi \right)  + \Delta t g\left(\overline{\mathrm{K}} \nabla B_k^{\beta}\left( \overline{\mathbf{e}}_{j,\phi}^{n+1} \right) , \nabla \psi \right)  = gS\left( \overline{A}_k^\beta \left( \overline{\phi}_j^n - \phi_j^n \right) , \psi \right) + \Delta t g\left( R_{k,\phi}^{n+1} , \psi \right)  \\
			&+ \Delta t C_{\Gamma} \left( C_k^{\beta}\left( \overline{\mathbf{e}}_{j,u}^{n} \right) ,  \psi \right) - \Delta t C_{\Gamma}\left( Q_{k,u}^{n+1} , \psi \right) - \Delta t g\left( \left(\mathrm{K}_j - \overline{\mathrm{K}} \right) \nabla C_k^{\beta}\left( \overline{\mathbf{e}}_{j,\phi}^{n} \right) , \nabla \psi \right) + \Delta t g\left( \mathrm{K}_j \nabla P_{k,\phi}^{n+1}   ,  \nabla \psi \right)  \\
			& - \Delta t g\left(  \left( \mathrm{K}_j - \overline{\mathrm{K}} \right) \nabla \left( P_{k,\phi}^{n+1} - Q_{k,\phi}^{n+1} \right) , \nabla \psi \right).
		\end{aligned}
	\end{equation}

	Taking $\mathbf{v} = C_k^{\beta}\left(  \overline{\mathbf{e}}_{j,u}^{n+1} \right)$ in (\ref{error14_1}) and $ \psi = C_k^{\beta}\left(  \overline{\mathbf{e}}_{j,\phi}^{n+1} \right) $ in (\ref{error14_2}), we will estimate each term occurring in (\ref{error14_1}) and (\ref{error14_2}) in the following. First, it follows from the Lemma 2.4 and Lemma 2.5 that there exists a symmetric positive definite matrix $G_k = \left(g_{lm} \right) \in \mathbb{R}^{k\times k}$ such that
	%	\begin{equation}\label{error15_1}
		%		\begin{aligned}
			%				& <<  A_k^{\beta}\left( \overline{\mathbf{e}}^{n+1} \right) , C_k^{\beta}\left(  \overline{\mathbf{e}}^{n+1} \right) >>  \\
			%				& = \sum_{i,j=1}^{k} g_{ij}<< \overline{\mathbf{e}}^{n+1+i-k} , \overline{\mathbf{e}}^{n+1+j-k} >> - \sum_{i,j=1}^{k} g_{ij}<< \overline{\mathbf{e}}^{n+i-k} , \overline{\mathbf{e}}^{n+j-k} >> + \| \sum_{i=0}^{k}\delta _i \overline{\mathbf{e}}^{n+1+i-k} \|_S^2 ,
			%			\end{aligned}
		%	\end{equation}
	\begin{subequations}
		\begin{align}
			& \left( A_k^{\beta}\left( \overline{\mathbf{e}}_{j,u}^{n+1} \right) , C_k^{\beta}\left(  \overline{\mathbf{e}}_{j,u}^{n+1} \right) \right)   \geq \sum_{l,m=1}^{k} g_{lm} \left( \overline{\mathbf{e}}_{j,u}^{n+1+l-k} , \overline{\mathbf{e}}_{j,u}^{n+1+m-k} \right) - \sum_{l,m=1}^{k} g_{lm}\left( \overline{\mathbf{e}}_{j,u}^{n+l-k} , \overline{\mathbf{e}}_{j,u}^{n+m-k} \right) , \label{error15_1} \\
			& \left( A_k^{\beta}\left( \overline{\mathbf{e}}_{j,\phi}^{n+1} \right) , C_k^{\beta}\left(  \overline{\mathbf{e}}_{j,\phi}^{n+1} \right) \right)   \geq \sum_{l,m=1}^{k} g_{lm} \left( \overline{\mathbf{e}}_{j,\phi}^{n+1+l-k} , \overline{\mathbf{e}}_{j,\phi}^{n+1+m-k} \right) - \sum_{l,m=1}^{k} g_{lm}\left( \overline{\mathbf{e}}_{j,\phi}^{n+l-k} , \overline{\mathbf{e}}_{j,\phi}^{n+m-k} \right) . \label{error15_2}
		\end{align}
	\end{subequations}
	%\begin{equation}\label{error15_1}
	%	\begin{aligned}
		%		& \left( A_k^{\beta}\left( \overline{\mathbf{e}}_{j,u}^{n+1} \right) , C_k^{\beta}\left(  \overline{\mathbf{e}}_{j,u}^{n+1} \right) \right)   \geq \sum_{l,m=1}^{k} g_{lm} \left( \overline{\mathbf{e}}_{j,u}^{n+1+l-k} , \overline{\mathbf{e}}_{j,u}^{n+1+m-k} \right) - \sum_{l,m=1}^{k} g_{lm}\left( \overline{\mathbf{e}}_{j,u}^{n+l-k} , \overline{\mathbf{e}}_{j,u}^{n+m-k} \right) ,
		%	\end{aligned}
	%\end{equation}
	%and
	%\begin{equation}\label{error15_2}
	%	\begin{aligned}
		%		\left( A_k^{\beta}\left( \overline{\mathbf{e}}_{j,\phi}^{n+1} \right) , C_k^{\beta}\left(  \overline{\mathbf{e}}_{j,\phi}^{n+1} \right) \right)   \geq \sum_{l,m=1}^{k} g_{lm} \left( \overline{\mathbf{e}}_{j,\phi}^{n+1+l-k} , \overline{\mathbf{e}}_{j,\phi}^{n+1+m-k} \right) - \sum_{l,m=1}^{k} g_{lm}\left( \overline{\mathbf{e}}_{j,\phi}^{n+l-k} , \overline{\mathbf{e}}_{j,\phi}^{n+m-k} \right) .
		%	\end{aligned}
	%\end{equation}
	From the Lemma 2.5, we can split $B_k^{\beta}\left( \overline{\mathbf{e}}_{j,u}^{n+1} \right)$ as follows:
	\begin{equation}\label{error16}
		\begin{aligned}
			&  a\left( B_k^{\beta}\left( \overline{\mathbf{e}}_{j,u}^{n+1} \right) , C_k^{\beta}\left(  \overline{\mathbf{e}}_{j,u}^{n+1} \right) \right) = a\left( \tau_k^{\beta} C_k^{\beta}\left( \overline{\mathbf{e}}_{j,u}^{n+1} \right) + D_k^{\beta} \left( \overline{\mathbf{e}}_{j,u}^{n+1} \right) , C_k^{\beta}\left(  \overline{\mathbf{e}}_{j,u}^{n+1} \right) \right) \\
			& = \nu \tau_k^{\beta}  \| \nabla C_k^{\beta}\left(  \overline{\mathbf{e}}_{j,u}^{n+1} \right) \|^2 + a\left( D_k^{\beta} \left( \overline{\mathbf{e}}_{j,u}^{n+1} \right) , C_k^{\beta}\left(  \overline{\mathbf{e}}_{j,u}^{n+1} \right) \right),
		\end{aligned}
	\end{equation}
	It also follows from the Lemma 2.4 and Lemma 2.5 that there exists a symmetric positive definite matrix $H_k = \left( h_{lm} \right) \in \mathbb{R}^{\left( k-1 \right)\times \left( k-1 \right)}$ such that
	\begin{equation}\label{error16_1}
		\begin{aligned}
			& a\left(  D_k^{\beta} \left( \overline{\mathbf{e}}_{j,u}^{n+1} \right) , C_k^{\beta}\left(  \overline{\mathbf{e}}_{j,u}^{n+1} \right) \right) = \nu \left( \nabla D_k^{\beta} \left( \overline{\mathbf{e}}_{j,u}^{n+1} \right) , \nabla C_k^{\beta}\left(  \overline{\mathbf{e}}_{j,u}^{n+1} \right) \right) \\
			& \geq \nu \sum_{l,m=1}^{k-1}h_{lm}\left( \nabla \overline{\mathbf{e}}_{j,u}^{n+2+l-k} , \nabla \overline{\mathbf{e}}_{j,u}^{n+2+m-k} \right) - \nu \sum_{l,m=1}^{k-1}h_{lm}\left( \nabla \overline{\mathbf{e}}_{j,u}^{n+1+l-k} , \nabla \overline{\mathbf{e}}_{j,u}^{n+1+m-k} \right) .
		\end{aligned}
	\end{equation}
%	and the Lemma 2.5 implies that
%	\begin{equation}\label{error16_2}
%		\begin{aligned}
%			& a\left( F_k^{\beta} \left( \overline{\mathbf{e}}_{j,u}^{n+1} \right) , C_k^{\beta}\left(  \overline{\mathbf{e}}_{j,u}^{n+1} \right) \right) = \nu \left( \nabla F_k^{\beta} \left( \overline{\mathbf{e}}_{j,u}^{n+1} \right) , \nabla C_k^{\beta}\left(  \overline{\mathbf{e}}_{j,u}^{n+1} \right) \right) \\
%			& \geq \kappa _{k} \nu \| \nabla \overline{\mathbf{e}}_{j,u}^{n+1} \|^2 + U_{k}\left( \nabla \overline{\mathbf{e}}_{j,u}^{n+1} ,\dots, \nabla \overline{\mathbf{e}}_{j,u}^{n+3-k} \right) - U_{k}\left( \nabla \overline{\mathbf{e}}_{j,u}^{n} ,\dots, \nabla \overline{\mathbf{e}}_{j,u}^{n+2-k} \right).
%		\end{aligned}
%	\end{equation}
	
	Similarly, we can deduce the following estimates:
	\begin{equation}\label{error16_3}
		\begin{aligned}
			& \Delta t \sum_{i=1}^{d-1} \left( \overline{\eta}_{i} B_k^{\beta}\left( \mathbf{e}_{j,u}^{n+1} \right)\cdot \tau_i , C_k^{\beta}\left( \mathbf{e}_{j,u}^{n+1} \right)\cdot \tau_i \right)_{\Gamma} \\
			& = \Delta t \sum_{i=1}^{d-1} \left( \overline{\eta}_{i} \left( \tau_k^{\beta} C_k^{\beta}\left( \overline{\mathbf{e}}_{j,u}^{n+1} \right) + D_k^{\beta} \left( \overline{\mathbf{e}}_{j,u}^{n+1} \right) \right)\cdot \tau_i , C_k^{\beta}\left( \mathbf{e}_{j,u}^{n+1} \right)\cdot \tau_i \right)_{\Gamma} \\
			& \geq \Delta t \sum_{i=1}^{d-1} \tau_k^{\beta} \overline{\eta}_{i}^{\min}\| C_k^{\beta}\left( \mathbf{e}_{j,u}^{n+1} \right)\cdot \tau_i \|_{\Gamma}^2  + \Delta t \sum_{i=1}^{d-1}   \sum_{l,m=1}^{k-1} h_{lm} \left( \overline{\eta}_{i} \overline{\mathbf{e}}_{j,u}^{n+2+l-k} \cdot \tau_i , \overline{\mathbf{e}}_{j,u}^{n+2+m-k} \cdot \tau_i \right)_{\Gamma}  \\
			&- \Delta t \sum_{i=1}^{d-1}  \sum_{l,m=1}^{k-1} h_{lm} \left( \overline{\eta}_{i} \overline{\mathbf{e}}_{j,u}^{n+1+l-k} \cdot \tau_i , \overline{\mathbf{e}}_{j,u}^{n+1+m-k} \cdot \tau_i \right)_{\Gamma} ,
		\end{aligned}
	\end{equation}
	and
	\begin{equation}\label{error16_4}
		\begin{aligned}
			&\left(\overline{\mathrm{K}} \nabla B_k^{\beta}\left( \overline{\mathbf{e}}_{j,\phi}^{n+1} \right) , \nabla C_k^{\beta}\left(  \overline{\mathbf{e}}_{j,\phi}^{n+1} \right) \right) = \left( \overline{\mathrm{K}} \nabla \left(  \tau_k^{\beta} C_k^{\beta}\left( \overline{\mathbf{e}}_{j,\phi}^{n+1} \right) + D_k^{\beta}\left( \overline{\mathbf{e}}_{j,\phi}^{n+1} \right)  \right) , \nabla  C_k^{\beta}\left(  \overline{\mathbf{e}}_{j,\phi}^{n+1} \right)  \right) \\
			& \geq \tau_k^{\beta} \overline{k}_{\min} \| \nabla C_k^{\beta}\left( \overline{\mathbf{e}}_{j,\phi}^{n+1} \right) \|^2  +   \sum_{l,m=1}^{k-1}h_{lm}\left( \overline{\mathrm{K}} \nabla \overline{\mathbf{e}}_{j,\phi}^{n+2+l-k} , \nabla \overline{\mathbf{e}}_{j,\phi}^{n+2+m-k} \right) \\
			&-  \sum_{l,m=1}^{k-1}h_{lm}\left( \overline{\mathrm{K}} \nabla \overline{\mathbf{e}}_{j,\phi}^{n+1+l-k} , \nabla \overline{\mathbf{e}}_{j,\phi}^{n+1+m-k} \right) .
		\end{aligned}
	\end{equation}
	
	Since the term $C_k^{\beta}\left(  \overline{\mathbf{e}}_{j,u}^{n+1} \right)$ belongs to the divergence-free space, it implies
	\begin{equation}\label{error17}
		b\left( C_k^{\beta}\left(  \overline{\mathbf{e}}_{j,u}^{n+1} \right) ,  B_k^{\beta}\left( \mathbf{e}_{j,p}^{n+1} \right) \right)   = 0, \quad b\left( C_k^{\beta}\left(  \overline{\mathbf{e}}_{j,u}^{n+1} \right) , P_{k,p}^{n+1} \right) = 0.
	\end{equation}
	By using the Cauchy-Schwarz inequality and trace inequality, we obtain
	\begin{equation}\label{error18_1}
		\begin{aligned}
			& \Delta t \sum_{i=1}^{d-1}\left( \left( \eta_{i,j} - \overline{\eta}_{i} \right) C_k^{\beta}\left( \overline{\mathbf{e}}^n_{j,u} \right) \cdot \tau_i , C_k^{\beta}\left( \overline{\mathbf{e}}^{n+1}_{j,u} \right)\cdot \tau_i \right)_{\Gamma}  \leq \Delta t \sum_{i=1}^{d-1} \eta_{i,j}^{\prime \max} \left( C_k^{\beta}\left( \overline{\mathbf{e}}^n_{j,u} \right) \cdot \tau_i , C_k^{\beta}\left( \overline{\mathbf{e}}^{n+1}_{j,u} \right)\cdot \tau_i \right)_{\Gamma} \\
			& \leq \Delta t \sum_{i=1}^{d-1} \left( \frac{ \eta_{i}^{\prime \max}  }{2} \| C_k^{\beta}\left( \overline{\mathbf{e}}^{n+1}_{j,u} \right)\cdot \tau_i \|_{\Gamma}^2 + \frac{ \eta_{i}^{\prime \max}  }{2} \| C_k^{\beta}\left( \overline{\mathbf{e}}^{n}_{j,u} \right)\cdot \tau_i \|_{\Gamma}^2 \right),
		\end{aligned}
	\end{equation}
	\begin{equation}\label{error18_2}
		\begin{aligned}
			& \Delta t \sum_{i=1}^{d-1}\left( \eta_{i,j} P_{k,u}^{n+1} \cdot \tau_i , C_k^{\beta}\left( \overline{\mathbf{e}}^{n+1}_{j,u} \right) \cdot \tau_i \right)  = \Delta t \sum_{i=1}^{d-1} \left( \left( \eta_{i,j} - \overline{\eta}_{i} \right) P_{k,u}^{n+1} \cdot \tau_i , C_k^{\beta}\left( \overline{\mathbf{e}}^{n+1}_{j,u} \right) \cdot \tau_i \right) \\
			& + \Delta t \sum_{i=1}^{d-1}\left(  \overline{\eta}_{i} P_{k,u}^{n+1} \cdot \tau_i , C_k^{\beta}\left( \overline{\mathbf{e}}^{n+1}_{j,u} \right) \cdot \tau_i \right)  \leq \Delta t \sum_{i=1}^{d-1} \frac{ \sigma_1 }{2} \eta_{i}^{\prime \max} \| C_k^{\beta}\left( \overline{\mathbf{e}}^{n+1}_{j,u} \right) \cdot \tau_i \|_{\Gamma}^2 \\
			& + \Delta t \sum_{i=1}^{d-1} \frac{ \sigma_2 }{2}  \| \sqrt{ \overline{\eta}_{i} } C_k^{\beta}\left( \overline{\mathbf{e}}^{n+1}_{j,u} \right) \cdot \tau_i \|_{\Gamma}^2  + \Delta t \sum_{i=1}^{d-1} \frac{ 1 }{2 \sigma_1} \eta_{i}^{\prime \max} \| \nabla P_{k,u}^{n+1} \|^2 + \Delta t \sum_{i=1}^{d-1} C\frac{ 1 }{ 2\sigma_2 }  \| \nabla P_{k,u}^{n+1} \|^2 ,
		\end{aligned}
	\end{equation}
	\begin{equation}\label{error18_3}
		\begin{aligned}
			& \Delta t \sum_{i=1}^{d-1} \left( \left( \eta_{i,j} - \overline{\eta}_{i} \right) \left( P_{k,u}^{n+1} - Q_{k,u}^{n+1}  \right) \cdot \tau_i , C_k^{\beta}\left( \overline{\mathbf{e}}^{n+1}_{j,u} \right) \cdot \tau_i \right) \\
			& \leq \Delta t \sum_{i=1}^{d-1} \frac{ \sigma_1 }{2} \eta_{i}^{\prime \max} \| C_k^{\beta}\left( \overline{\mathbf{e}}^{n+1}_{j,u} \right) \cdot \tau_i \|_{\Gamma}^2 + \Delta t \sum_{i=1}^{d-1}  \frac{ 1 }{2 \sigma_1} \eta_{i}^{\prime \max} \left(  \| \nabla P_{k,u}^{n+1} \|^2 + \| \nabla Q_{k,u}^{n+1} \|^2 \right),
		\end{aligned}
	\end{equation}
	\begin{equation}\label{error18_4}
		\begin{aligned}
			& \Delta t C_{\Gamma} \left( C_k^{\beta}\left( \overline{\mathbf{e}}^{n+1}_{j,u} \right) , C_k^{\beta}\left(  \overline{\mathbf{e}}_{j,\phi}^{n} \right)  \right) \leq C\Delta t \| \nabla C_k^{\beta}\left( \overline{\mathbf{e}}^{n+1}_{j,u} \right) \| \| C_k^{\beta}\left(  \overline{\mathbf{e}}_{j,\phi}^{n} \right) \|^{1/2}  \| C_k^{\beta}\left(  \overline{\mathbf{e}}_{j,\phi}^{n} \right) \|^{1/2} \\
			& \leq  \frac{ \Delta t }{2\mu _1} \nu \| \nabla C_k^{\beta}\left( \overline{\mathbf{e}}^{n+1}_{j,u} \right) \|^2 + \frac{\mu_1 \Delta t}{2} \left( \lambda _1 \overline{k}_{\min} \| \nabla C_k^{\beta}\left(  \overline{\mathbf{e}}_{j,\phi}^{n} \right) \|^2  + \frac{C}{ \overline{k}_{\min} \nu ^2 } \| C_k^{\beta}\left(  \overline{\mathbf{e}}_{j,\phi}^{n} \right) \|^2 \right).
		\end{aligned}
	\end{equation}
	%\begin{equation}\label{error18_5}
	%	\begin{aligned}
		%		\Delta t C_{\Gamma}\left( C_k^{\beta}\left( \overline{\mathbf{e}}^{n+1}_{j,u} \right), Q_{k,\phi}^{n+1}  \right)    & \leq C \Delta t \| \nabla C_k^{\beta}\left( \overline{\mathbf{e}}^{n+1}_{j,u} \right) \|  \| \nabla  Q_{k,\phi}^{n+1} \|  \\
		%		& \leq \frac{\epsilon _1 }{8} \Delta t \nu \| \nabla C_k^{\beta}\left( \overline{\mathbf{e}}^{n+1}_{j,u} \right) \|^2 + C\Delta t \| \nabla  Q_{k,\phi}^{n+1} \|^2 ,
		%	\end{aligned}
	%\end{equation}
	
	%\begin{equation}\label{error18}
	%	\begin{aligned}
		%		&\Delta tC_{\Gamma} \left( C_k^{\beta}\left(  \overline{\mathbf{e}}^{n} \right) , C_k^{\beta}\left(  \overline{\mathbf{e}}^{n+1} \right) \right) \leq C\Delta t \| C_k^{\beta}\left(  \overline{\mathbf{e}}^{n} \right)  \|^{\frac{1}{2}}  \| \nabla C_k^{\beta}\left(  \overline{\mathbf{e}}^{n} \right) \|^{\frac{1}{2}} \| \nabla C_k^{\beta}\left(  \overline{\mathbf{e}}^{n+1} \right) \| \\
		%		& \leq \frac{\Delta t}{2\mu} \| \nabla C_k^{\beta}\left(  \overline{\mathbf{e}}^{n+1} \right) \|^2 + \frac{\mu \Delta t}{2} \left(  \lambda \| \nabla C_k^{\beta}\left(  \overline{\mathbf{e}}^{n} \right) \|^2  + C \| C_k^{\beta}\left(  \overline{\mathbf{e}}^{n} \right) \|^2 \right)   .
		%	\end{aligned}
	%\end{equation}
	
	In a similar way, we can obtain the following estimates for the hydraulic conductivity tensor terms:
	\begin{equation}\label{error18_6}
		\begin{aligned}
			& \Delta t g \left( \left(\mathrm{K}_j - \overline{\mathrm{K}} \right) \nabla C_k^{\beta}\left( \overline{\mathbf{e}}_{j,\phi}^{n} \right) , \nabla C_k^{\beta}\left(  \overline{\mathbf{e}}_{j,\phi}^{n+1} \right) \right)  \leq \Delta t g \left( \rho_j^{\prime \max} \nabla C_k^{\beta}\left( \overline{\mathbf{e}}_{j,\phi}^{n} \right) , \nabla C_k^{\beta}\left(  \overline{\mathbf{e}}_{j,\phi}^{n+1} \right) \right) \\
			& \leq \Delta t g \frac{ \rho^{\prime \max} }{2} \| \nabla C_k^{\beta}\left(  \overline{\mathbf{e}}_{j,\phi}^{n+1} \right) \|^2 + \Delta t g \frac{ \rho^{\prime \max} }{2} \| \nabla C_k^{\beta}\left( \overline{\mathbf{e}}_{j,\phi}^{n} \right) \|^2 ,
		\end{aligned}
	\end{equation}
    \begin{small}
	\begin{equation}\label{error18_7}
		\begin{aligned}
			& \Delta t g\left( \mathrm{K}_j \nabla P_{k,\phi}^{n+1}   ,  \nabla C_k^{\beta}\left(  \overline{\mathbf{e}}_{j,\phi}^{n+1} \right) \right)  = \Delta t g\left( \left(\mathrm{K}_j - \overline{\mathrm{K}} \right) \nabla P_{k,\phi}^{n+1}   ,  \nabla C_k^{\beta}\left(  \overline{\mathbf{e}}_{j,\phi}^{n+1} \right) \right) + \Delta t g\left( \overline{\mathrm{K}} \nabla P_{k,\phi}^{n+1}   ,  \nabla C_k^{\beta}\left(  \overline{\mathbf{e}}_{j,\phi}^{n+1} \right) \right) \\
			& \leq \Delta t g\left( \rho^{\prime \max} \nabla P_{k,\phi}^{n+1}   ,  \nabla C_k^{\beta}\left(  \overline{\mathbf{e}}_{j,\phi}^{n+1} \right) \right) + \Delta t g\left( \overline{\mathrm{K}} \nabla P_{k,\phi}^{n+1}   ,  \nabla C_k^{\beta}\left(  \overline{\mathbf{e}}_{j,\phi}^{n+1} \right) \right) \\
			& \leq \Delta t g \frac{ \sigma_3 \rho^{\prime \max} }{2} \| \nabla C_k^{\beta}\left(  \overline{\mathbf{e}}_{j,\phi}^{n+1} \right) \|^2 + \Delta t g \frac{ \sigma _4 }{2} \| \sqrt{\overline{\mathrm{K}}} \nabla C_k^{\beta}\left(  \overline{\mathbf{e}}_{j,\phi}^{n+1} \right) \|^2 \\
			& + \Delta t g \frac{ \rho^{\prime \max} }{ 2\sigma_3 } \| \nabla P_{k,\phi}^{n+1}  \|^2 + \Delta t g \overline{k}_{\max} \frac{ 1 }{2\sigma _4} \| \nabla P_{k,\phi}^{n+1}  \|^2 ,
		\end{aligned}
	\end{equation}
	\begin{equation}\label{error18_8}
		\begin{aligned}
			& \Delta t g\left(  \left( \mathrm{K}_j - \overline{\mathrm{K}} \right) \nabla \left( P_{k,\phi}^{n+1} - Q_{k,\phi}^{n+1} \right) , \nabla C_k^{\beta}\left( \overline{\mathbf{e}}_{j,\phi}^{n+1} \right) \right)  \leq \Delta t g\left(  \rho^{\prime \max} \nabla \left( P_{k,\phi}^{n+1} - Q_{k,\phi}^{n+1} \right) , \nabla C_k^{\beta}\left(  \overline{\mathbf{e}}_{j,\phi}^{n+1} \right) \right) \\
			& \leq \Delta t g \frac{ \sigma_3 \rho^{\prime \max} }{2} \| \nabla C_k^{\beta}\left(  \overline{\mathbf{e}}_{j,\phi}^{n+1} \right) \|^2 + \Delta t g \frac{ \rho^{\prime \max} }{ 2\sigma_3 } \left( \| \nabla  P_{k,\phi}^{n+1} \|^2 + \| \nabla  Q_{k,\phi}^{n+1} \|^2 \right),
		\end{aligned}
	\end{equation}
\end{small}
	\begin{equation}\label{error18_9}
		\begin{aligned}
			& \Delta t C_{\Gamma} \left( C_k^{\beta}\left( \overline{\mathbf{e}}_{j,u}^{n} \right) ,  C_k^{\beta}\left( \overline{\mathbf{e}}_{j,\phi}^{n+1} \right) \right)  \leq C \Delta t \| \nabla C_k^{\beta}\left( \overline{\mathbf{e}}_{j,\phi}^{n+1} \right) \| \| C_k^{\beta}\left( \overline{\mathbf{e}}_{j,u}^{n} \right) \|^{1/2} \| \nabla C_k^{\beta}\left( \overline{\mathbf{e}}_{j,u}^{n} \right) \|^{1/2} \\
			& \leq \frac{ \Delta t }{2\mu_2} \overline{k}_{\min} \| \nabla C_k^{\beta}\left( \overline{\mathbf{e}}_{j,\phi}^{n+1} \right) \|^2 + \frac{\mu_2 \Delta t}{2}\left( \lambda_2 \nu \| \nabla C_k^{\beta}\left( \overline{\mathbf{e}}_{j,u}^{n} \right) \|^2 + \frac{C}{\nu \overline{k}_{\min}^2 }  \| C_k^{\beta}\left( \overline{\mathbf{e}}_{j,u}^{n} \right) \|^2  \right) .
		\end{aligned}
	\end{equation}
	Following from the Lemma 2.6, we can deduce
	\begin{equation}\label{error19}
		\begin{aligned}
			\Delta t \left( R_{k,u}^{n+1} , C_k^{\beta}\left( \overline{\mathbf{e}}^{n+1}_{j,u} \right) \right) & \leq \Delta t \| C_k^{\beta}\left( \overline{\mathbf{e}}^{n+1}_{j,u} \right) \|  \| R_{k,u}^{n+1} \| \leq C_{p,f} \Delta t \| \nabla C_k^{\beta}\left( \overline{\mathbf{e}}^{n+1}_{j,u} \right) \|  \| R_{k,u}^{n+1} \|  \\
			& \leq  \frac{\epsilon_1 }{16} \Delta t \nu  \| C_k^{\beta}\left( \overline{\mathbf{e}}^{n+1}_{j,u} \right) \|^2 + \frac{C}{\nu}\Delta t \| R_{k,u}^{n+1} \|^2  , \\
			%& \leq  {\color{red} \epsilon_3 \Delta t  \| C_k^{\beta}\left( \overline{\mathbf{e}}^{n+1}_{j,u} \right) \|^2 }  + C\Delta t \| R_{k,u}^{n+1} \|^2  , \\
		\end{aligned}
	\end{equation}
	\begin{equation}\label{error20}
		\Delta t a\left( P_{k,u}^{n+1} , C_k^{\beta}\left( \overline{\mathbf{e}}^{n+1}_{j,u} \right) \right)
		\leq \frac{\epsilon_1}{16}\Delta t \nu \| \nabla C_k^{\beta}\left( \overline{\mathbf{e}}^{n+1}_{j,u} \right) \|^2 +  C\Delta t \nu \| \nabla P_{k,u}^{n+1} \|^2 ,
	\end{equation}
	\begin{equation}\label{error21}
		\Delta t C_{\Gamma}\left( C_k^{\beta}\left( \overline{\mathbf{e}}^{n+1}_{j,u} \right) , Q_{k,\phi}^{n+1}  \right)   \leq \frac{\epsilon _1 }{16} \Delta t \nu \| \nabla C_k^{\beta}\left( \overline{\mathbf{e}}^{n+1}_{j,u} \right) \|^2 + \frac{C}{\nu}\Delta t \| \nabla  Q_{k,\phi}^{n+1} \|^2 .
	\end{equation}
	
	%	\begin{equation}\label{error21}
		%		\begin{aligned}
			%			\Delta t C_{\Gamma}\left( Q_k^n  , C_k^{\beta}\left(  \overline{\mathbf{e}}^{n+1} \right) \right) & \leq C\Delta t \| \nabla Q_k^n \|  \| \nabla C_k^{\beta}\left(  \overline{\mathbf{e}}^{n+1} \right) \| \leq C\frac{ \Delta t }{ \sqrt{c_a} } \| \nabla Q_k^n \|  \| \nabla C_k^{\beta}\left(  \overline{\mathbf{e}}^{n+1} \right) \|_a \\
			%			& \leq \epsilon \Delta t \| C_k^{\beta}\left(  \overline{\mathbf{e}}^{n+1} \right) \|_a^2 + C \Delta t \| \nabla Q_k^n \| ^2 \\
			%			& \leq \epsilon \Delta t \| C_k^{\beta}\left(  \overline{\mathbf{e}}^{n+1} \right) \|_a^2 + C \left( \Delta t \right)^{ 2k+1 }.
			%		\end{aligned}
		%	\end{equation}
	
	In a similar way, we can also have the following estimates:
	\begin{subequations}
		\begin{align}
			\Delta t g\left( R_{k,\phi}^{n+1} , C_k^{\beta}\left( \overline{\mathbf{e}}_{j,\phi}^{n+1} \right) \right)  \leq \frac{\epsilon_2}{8} g\Delta t \overline{k}_{\min} \| \nabla C_k^{\beta}\left( \overline{\mathbf{e}}_{j,\phi}^{n+1} \right) \|^2  + \frac{C}{ \overline{k}_{\min} }g\Delta t \| R_{k,\phi}^{n+1} \| ^2 ,  \label{error19_1} \\
			\Delta t C_{\Gamma}\left( Q_{k,u}^{n+1} , C_k^{\beta}\left( \overline{\mathbf{e}}_{j,\phi}^{n+1} \right) \right)  \leq \frac{\epsilon_2}{8} g\Delta t \overline{k}_{\min} \| \nabla C_k^{\beta}\left( \overline{\mathbf{e}}_{j,\phi}^{n+1} \right) \|^2 + \frac{C}{ \overline{k}_{\min} }g\Delta t  \| \nabla Q_{k,u}^{n+1} \|^2 .  \label{error21_1}
		\end{align}
	\end{subequations}
	
	To bound the remaining term, note that
	\begin{displaymath}
		\mathbf{u}_j^n = \eta_{k,j}^n \overline{\mathbf{u}}_j^n , \phi_{j}^n = \eta_{k,j}^n \overline{\phi}_j^n , \quad |\eta_{k,j}^n -1| \leq C_0^{k+1}\Delta t ^{k+1},\quad \forall n\leq m.
	\end{displaymath}
	Hence, it yields
	%\begin{equation}\label{error22}
	%	\begin{aligned}
		%		\left( \overline{A}_k^\beta \left( \overline{\mathbf{u}}_j^n - \mathbf{u}_j^n \right) , C_k^{\beta}\left( \overline{\mathbf{e}}^{n+1}_{j,u} \right) \right) & \leq  \epsilon_3 \Delta t \| C_k^{\beta}\left( \overline{\mathbf{e}}^{n+1}_{j,u} \right) \|^2 + \frac{C}{\Delta t} \| \overline{A}_k^\beta \left( \overline{\mathbf{u}}_j^n - \mathbf{u}_j^n \right)  \|^2  \\
		%		& \leq  \epsilon_3 \Delta t \| C_k^{\beta}\left( \overline{\mathbf{e}}^{n+1}_{j,u} \right) \|^2 + CC_0^{2k+2}\left( \Delta t \right)^{2k+1} .
		%	\end{aligned}
	%\end{equation}
	\begin{equation}\label{error22}
		\begin{aligned}
			&\left( \overline{A}_k^\beta \left( \overline{\mathbf{u}}_j^n - \mathbf{u}_j^n \right) , C_k^{\beta}\left( \overline{\mathbf{e}}^{n+1}_{j,u} \right) \right)  \leq C\| \nabla C_k^{\beta}\left( \overline{\mathbf{e}}^{n+1}_{j,u} \right) \|  \| \overline{A}_k^\beta \left( \overline{\mathbf{u}}_j^n - \mathbf{u}_j^n \right)  \|  \\
			& \leq  \frac{ \epsilon_1 }{ 16 }\Delta t \nu \| \nabla C_k^{\beta}\left( \overline{\mathbf{e}}^{n+1}_{j,u} \right) \|^2 + \frac{C}{\nu \Delta t} \| \overline{A}_k^\beta \left( \overline{\mathbf{u}}_j^n - \mathbf{u}_j^n \right)  \|^2   \leq  \frac{ \epsilon_1 }{ 16 }\Delta t \nu \| \nabla C_k^{\beta}\left( \overline{\mathbf{e}}^{n+1}_{j,u} \right) \|^2 + CC_0^{2k+2}\left( \Delta t \right)^{2k+1} ,
		\end{aligned}
	\end{equation}
	and
	\begin{equation}\label{error22_0}
		\begin{aligned}
			&\left( \overline{A}_k^\beta \left( \overline{\phi}_j^n - \phi_j^n \right) , C_k^{\beta}\left( \overline{\mathbf{e}}^{n+1}_{j,\phi} \right) \right)  \leq C\| \nabla C_k^{\beta}\left( \overline{\mathbf{e}}^{n+1}_{j,\phi} \right) \| \| \overline{A}_k^\beta \left( \overline{\phi}_j^n - \phi_j^n \right) \| \\
			& \leq \frac{\epsilon_2}{8} \Delta t  \overline{k}_{\min} \| \nabla C_k^{\beta}\left( \overline{\mathbf{e}}^{n+1}_{j,\phi} \right) \|^2 + \frac{C}{\Delta t} \| \overline{A}_k^\beta \left( \overline{\phi}_j^n - \phi_j^n \right) \|^2  \leq \frac{\epsilon_2}{8} \Delta t  \overline{k}_{\min} \| \nabla C_k^{\beta}\left( \overline{\mathbf{e}}^{n+1}_{j,\phi} \right) \|^2 + CC_0^{2k+2}\left( \Delta t \right)^{2k+1} . \\
		\end{aligned}
	\end{equation}
	%\begin{equation}\label{error22_0}
	%	\begin{aligned}
		%		\left( \overline{A}_k^\beta \left( \overline{\phi}_j^n - \phi_j^n \right) , C_k^{\beta}\left( \overline{\mathbf{e}}^{n+1}_{j,\phi} \right) \right) & \leq \epsilon_4 \Delta t gS \| C_k^{\beta}\left( \overline{\mathbf{e}}^{n+1}_{j,\phi} \right) \|^2 + \frac{C}{\Delta t} \| \overline{A}_k^\beta \left( \overline{\phi}_j^n - \phi_j^n \right) \|^2 \\
		%		& \leq \epsilon_4 \Delta t gS \| C_k^{\beta}\left( \overline{\mathbf{e}}^{n+1}_{j,\phi} \right) \|^2 + + CC_0^{2k+2}\left( \Delta t \right)^{2k+1} . \\
		%	\end{aligned}
	%\end{equation}
	
	Furthermore, we will deal with the nonlinear terms. First, we rewrite these terms as follows:
	\begin{equation}\label{error22_1}
		\begin{aligned}
			& d\left( C_k^{\beta}\left( \overline{\mathbf{u}}_j^n \right) , C_k^{\beta}\left( \overline{\mathbf{u}}_j^n \right) , C_k^{\beta}\left( \overline{\mathbf{e}}^{n+1}_{j,u} \right) \right)  -  d\left( \mathbf{u}_j\left(t^{n+\beta}\right) , \mathbf{u}_j\left(t^{n+\beta}\right) , C_k^{\beta}\left( \overline{\mathbf{e}}^{n+1}_{j,u} \right) \right) \\
			& = d\left( C_k^{\beta}\left( \overline{\mathbf{u}}_j^n \right) , C_k^{\beta}\left( \overline{\mathbf{u}}_j^n \right) , C_k^{\beta}\left( \overline{\mathbf{e}}^{n+1}_{j,u} \right) \right) - d\left( C_k^{\beta}\left( \mathbf{u}_j\left(t^n\right) \right) , C_k^{\beta}\left( \mathbf{u}_j\left(t^n\right) \right) , C_k^{\beta}\left( \overline{\mathbf{e}}^{n+1}_{j,u} \right) \right) \\
			& + d\left( C_k^{\beta}\left( \mathbf{u}_j\left(t^n\right) \right) , C_k^{\beta}\left( \mathbf{u}_j\left(t^n\right) \right) , C_k^{\beta}\left( \overline{\mathbf{e}}^{n+1}_{j,u} \right) \right) - d\left( \mathbf{u}_j\left(t^{n+\beta}\right) , \mathbf{u}_j\left(t^{n+\beta}\right) , C_k^{\beta}\left( \overline{\mathbf{e}}^{n+1}_{j,u} \right) \right) \\
			& = d\left( C_k^{\beta}\left( \overline{\mathbf{e}}^{n}_{j,u} \right) , C_k^{\beta}\left( \mathbf{u}_j\left(t^n\right) \right) , C_k^{\beta}\left( \overline{\mathbf{e}}^{n+1}_{j,u} \right) \right) + d\left( C_k^{\beta}\left( \overline{\mathbf{e}}^{n}_{j,u} \right) , C_k^{\beta}\left( \overline{\mathbf{e}}^{n}_{j,u} \right) , C_k^{\beta}\left( \overline{\mathbf{e}}^{n+1}_{j,u} \right) \right) \\
			& - d\left( C_k^{\beta}\left( \mathbf{u}_j\left(t^n\right) \right) , C_k^{\beta}\left( \overline{\mathbf{e}}^{n}_{j,u} \right) , C_k^{\beta}\left( \overline{\mathbf{e}}^{n+1}_{j,u} \right) \right) + d\left( Q_{k,u}^{n+1} , C_k^{\beta}\left( \mathbf{u}_j\left(t^n\right) \right) , C_k^{\beta}\left( \overline{\mathbf{e}}^{n+1}_{j,u} \right) \right)\\
 &+ d\left( \mathbf{u}_j\left(t^{n+\beta}\right) , Q_{k,u}^{n+1} , C_k^{\beta}\left( \overline{\mathbf{e}}^{n+1}_{j,u} \right) \right).
		\end{aligned}
	\end{equation}
	Then, we can bound the right-hand side of (\ref{error22_1}) in what follows:
	\begin{equation}\label{error22_1_1}
		\begin{aligned}
			& d\left( C_k^{\beta}\left( \overline{\mathbf{e}}^{n}_{j,u} \right) , C_k^{\beta}\left( \mathbf{u}_j\left(t^n\right) \right) , C_k^{\beta}\left( \overline{\mathbf{e}}^{n+1}_{j,u} \right) \right) - d\left( C_k^{\beta}\left( \mathbf{u}_j\left(t^n\right) \right) , C_k^{\beta}\left( \overline{\mathbf{e}}^{n}_{j,u} \right) , C_k^{\beta}\left( \overline{\mathbf{e}}^{n+1}_{j,u} \right) \right) \\
			& \leq 2 \| C_k^{\beta}\left( \overline{\mathbf{e}}^{n}_{j,u} \right) \|^{1/2} \| \nabla C_k^{\beta}\left( \overline{\mathbf{e}}^{n}_{j,u} \right)  \|^{1/2}  \| \nabla C_k^{\beta}\left( \mathbf{u}_j\left(t^n\right) \right)  \| \| \nabla C_k^{\beta}\left( \overline{\mathbf{e}}^{n+1}_{j,u} \right) \| \\
			& \leq \frac{\epsilon_1}{16}\nu \| \nabla C_k^{\beta}\left( \overline{\mathbf{e}}^{n+1}_{j,u} \right) \|^2 + \epsilon_3 \nu  \| \nabla C_k^{\beta}\left( \overline{\mathbf{e}}^{n}_{j,u} \right) \|^2 + \frac{C}{\nu ^3} \| C_k^{\beta}\left( \overline{\mathbf{e}}^{n}_{j,u} \right) \|^2 \| \nabla C_k^{\beta}\left( \mathbf{u}_j\left(t^n\right) \right)  \|^4 ,
		\end{aligned}
	\end{equation}
	\begin{equation}\label{error22_1_2}
		\begin{aligned}
			&  d\left( C_k^{\beta}\left( \overline{\mathbf{e}}^{n}_{j,u} \right) , C_k^{\beta}\left( \overline{\mathbf{e}}^{n}_{j,u} \right) , C_k^{\beta}\left( \overline{\mathbf{e}}^{n+1}_{j,u} \right) \right) \\
			& \leq \| C_k^{\beta}\left( \overline{\mathbf{e}}^{n}_{j,u} \right) \|^{1/2} \| \nabla C_k^{\beta}\left( \overline{\mathbf{e}}^{n}_{j,u} \right)  \|^{1/2} \| C_k^{\beta}\left( \overline{\mathbf{e}}^{n}_{j,u} \right) \|^{1/2} \| \nabla C_k^{\beta}\left( \overline{\mathbf{e}}^{n}_{j,u} \right)  \|^{1/2} \| \nabla C_k^{\beta}\left( \overline{\mathbf{e}}^{n+1}_{j,u} \right) \| \\
			& \leq \frac{\epsilon_1}{16}\nu \| \nabla C_k^{\beta}\left( \overline{\mathbf{e}}^{n+1}_{j,u} \right) \|^2 + \frac{C}{\nu} \| C_k^{\beta}\left( \overline{\mathbf{e}}^{n}_{j,u} \right) \|^2 \| \nabla C_k^{\beta}\left( \overline{\mathbf{e}}^{n}_{j,u} \right)  \|^2 ,
		\end{aligned}
	\end{equation}
	\begin{equation}\label{error22_1_3}
		\begin{aligned}
			&  d\left( Q_{k,u}^{n+1} , C_k^{\beta}\left( \mathbf{u}_j\left(t^n\right) \right) , C_k^{\beta}\left( \overline{\mathbf{e}}^{n+1}_{j,u} \right) \right)  \leq C \| \nabla Q_{k,u}^{n+1} \| \| \nabla C_k^{\beta}\left( \mathbf{u}_j\left(t^n\right) \right) \| \| \nabla C_k^{\beta}\left( \overline{\mathbf{e}}^{n+1}_{j,u} \right) \| \\
			& \leq \frac{\epsilon_1}{16}\nu \| \nabla C_k^{\beta}\left( \overline{\mathbf{e}}^{n+1}_{j,u} \right) \|^2 + \frac{C}{\nu}\| \nabla Q_{k,u}^{n+1} \|^2  \| \nabla C_k^{\beta}\left( \mathbf{u}_j\left(t^n\right) \right) \|^2,
		\end{aligned}
	\end{equation}
	\begin{equation}\label{error22_1_4}
		\begin{aligned}
			&  d\left( \mathbf{u}_j\left(t^{n+\beta}\right) , Q_{k,u}^{n+1} , C_k^{\beta}\left( \overline{\mathbf{e}}^{n+1}_{j,u} \right) \right)   \leq C \| \nabla \mathbf{u}_j\left(t^{n+\beta}\right)  \|  \| \nabla Q_{k,u}^{n+1} \|   \| \nabla C_k^{\beta}\left( \overline{\mathbf{e}}^{n+1}_{j,u} \right) \|  \\
			& \leq \frac{\epsilon_1}{16}\nu \| \nabla C_k^{\beta}\left( \overline{\mathbf{e}}^{n+1}_{j,u} \right) \|^2 + \frac{C}{\nu}\| \nabla Q_{k,u}^{n+1} \|^2  \| \nabla \mathbf{u}_j\left(t^{n+\beta}\right)  \| ^2.
		\end{aligned}
	\end{equation}
	
	Combining (\ref{error14_1})-(\ref{error22_1_4}), taking $\epsilon_1 = \epsilon_2 = \tau_k^{\beta}$, $\epsilon_3 = \frac{3}{8}\tau_k^{\beta}$, $\mu_1 = \mu_2 = \mu$, $\lambda_1 = \lambda_2 = \lambda$, $\mu = \frac{1}{\sqrt{\lambda}}$, $\frac{1}{8}\tau_k^{\beta} - \sqrt{\lambda} \geq \rho > 0 $, and summing for $n$ from $k-1$ to $m$, noting that the positive definite symmetric matrix $G_k = \left( g_{ij}\right)$ has a minimum eigenvalue $g_k$, we obtain the following inequality after dropping some unnecessary terms:
	\begin{equation}\label{error23}
		\begin{aligned}
			&g_k \| \overline{\mathbf{e}}^{m+1}_{j,u} \|^2 + \Delta t\sum_{n=k-1}^{m} \rho \nu \| \nabla C_k^{\beta}\left( \overline{\mathbf{e}}^{n+1}_{j,u} \right) \|^2  + \Delta t \sum_{n=k-1}^{m} \sum_{i=1}^{d-1}  \rho_1 \| C_k^{\beta}\left( \overline{\mathbf{e}}^{n+1}_{j,u} \right) \cdot \tau_i \|^2_{\Gamma}  \\
			&+ g_k \| \overline{\mathbf{e}}^{m+1}_{j,\phi} \|^2 + \Delta t\sum_{n=k-1}^{m}  \left( \rho _2 + \rho \overline{k}_{\min} \right) \| \nabla C_k^{\beta}\left( \overline{\mathbf{e}}^{n+1}_{j,\phi} \right) \|^2   \\
			&\leq C\Delta t \sum_{n=k-1}^{m} \left( \left( \| \nabla C_k^{\beta}\left( \mathbf{u}_j\left(t^n\right) \right)  \|^4 + \| \nabla C_k^{\beta}\left( \overline{\mathbf{e}}^{n}_{j,u} \right)  \|^2  + 1 \right) \| C_k^{\beta}\left( \overline{\mathbf{e}}^{n}_{j,u} \right) \|^2  \right) \\
			&+ C\Delta t \sum_{n=k-1}^{m}\| C_k^{\beta}\left( \overline{\mathbf{e}}^{n}_{j,\phi} \right) \|^2 + CTC_0^{2k+2}\left( \Delta t \right)^{2k} \\
			&\leq C\Delta t \sum_{n=k-1}^{m} \| \overline{\mathbf{e}}^{n}_{j,u}  \|^2 \left( \sum_{q=0}^{\min \{ k-1,m-n \} } \left( \| \nabla C_k^{\beta}\left( \mathbf{u}_j\left(t^{n+q}\right) \right)  \|^4 + \| \nabla C_k^{\beta}\left( \overline{\mathbf{e}}^{n+q}_{j,u} \right)  \|^2  + 1 \right)  \right) \\
			&+ C\Delta t  \sum_{n=k-1}^{m}\| \overline{\mathbf{e}}^{n}_{j,\phi} \|^2 + CTC_0^{2k+2}\left( \Delta t \right)^{2k}, \\
		\end{aligned}
	\end{equation}
	where $\rho _1 = \left( \left( \tau_k^{\beta}-\sigma_2 \right)\overline{\eta}_{i}^{\min} - \left( 1+\sigma_1 \right)\eta_{i}^{\prime \max} \right)$, $\rho_2 = \left( \frac{ \tau_k^{\beta} }{2} - \frac{\sigma_4}{2} \right)\overline{k}_{\min} - \left( 1 + \sigma_3 \right)\rho^{\prime\max} $, and we used the assumption on the initial step $\overline{\mathbf{u}}_j^{n}, \mathbf{u}_j^{n}, \overline{\phi}_j^{n}, \phi_j^{n},\forall n \leq k-1$.
	
	To guarantee $\rho _1 >0$ and $\rho _2 >0$, we need $0<\sigma_2 , \sigma_4 < \tau_k^{\beta}$, and
	\begin{equation}
		\frac{\eta_{i}^{\prime \max}}{ \overline{\eta}_i^{\min} } < \frac{ \tau_k^{\beta} - \sigma_2 }{ 1 + \sigma_1 } ,\quad \frac{ \rho^{\prime \max} }{ \overline{k}_{\min} } < \frac{ \tau_k^{\beta} - \sigma_4 }{ 2\left( 1+\sigma_3 \right) },
	\end{equation}
	For $\forall \sigma_2,\sigma_4 \in \left(0, \tau_k^{\beta} \right)$, $\forall \sigma_1 >0$, $\forall \sigma_3>0$, we can derive that $\frac{ \tau_k^{\beta} - \sigma_2 }{ 1 + \sigma_1 } \in \left( 0, \tau_k^{\beta} \right)$ and $\frac{ \tau_k^{\beta} - \sigma_4 }{ 2\left( 1+\sigma_3 \right) } \in \left(0, \frac{\tau_k^{\beta}}{2} \right) $. Then it is easy to find $\sigma_2,\sigma_4 \in \left(0, \tau_k^{\beta} \right)$ and $\sigma_1 >0 , \sigma_3>0$ such that $\frac{\eta_{i}^{\prime \max}}{ \overline{\eta}_i^{\min} } < \frac{2\tau_k^{\beta}}{3}$ and $\frac{ \rho^{\prime \max} }{ \overline{k}_{\min} } < \frac{\tau_k^{\beta}}{3}$.
	
	Furthermore, applying the Lemma 2.3 to (\ref{error23}), it yields
	\begin{equation}\label{error24}
		\begin{aligned}
			&g_k \| \overline{\mathbf{e}}^{m+1}_{j,u} \|^2 + \Delta t\sum_{n=k-1}^{m} \rho \nu \| \nabla C_k^{\beta}\left( \overline{\mathbf{e}}^{n+1}_{j,u} \right) \|^2  + \Delta t \sum_{n=k-1}^{m} \sum_{i=1}^{d-1}  \rho_1 \| C_k^{\beta}\left( \overline{\mathbf{e}}^{n+1}_{j,u} \right) \cdot \tau_i \|^2_{\Gamma}  \\
			&+ g_k \| \overline{\mathbf{e}}^{m+1}_{j,\phi} \|^2 + \Delta t\sum_{n=k-1}^{m}  \left( \rho _2 + \rho \overline{k}_{\min} \right) \| \nabla C_k^{\beta}\left( \overline{\mathbf{e}}^{n+1}_{j,\phi} \right) \|^2   \\
			&\leq CT\exp\left( \left( C_1^4 + C_1^2 + 1 \right)T + C_T \right) C_0^{2k+2} \left( \Delta t \right)^{2k} =: C_2C_0^{2k+2}\left( \Delta t \right)^{2k},
		\end{aligned}
	\end{equation}
	where we used the assumption on the exact solution and (\ref{error10}), and the constant $C_2>0$ is independent of $\Delta t$ and $C_0$.
	%+ \frac{3}{8}\tau_k^{\beta} \nu \Delta t \| \nabla C_k^{\beta}\left( \overline{\mathbf{e}}^{m+1}_{j,u} \right) \|^2

	Particularly, one can deduce from (\ref{error24}) that
	\begin{equation}\label{errorCon1}
		\| \overline{\mathbf{e}}^{m+1}_{j,u} \| , \| \overline{\mathbf{e}}^{m+1}_{j,\phi} \| \leq \sqrt{ C_2C_0^{2k+2} } \left( \Delta t\right)^{k}, \quad \forall m+1 \leq T/{\Delta t}.
	\end{equation}
	
	%\begin{equation*}
	%	\frac{\mathrm{d} r\left( t \right)}{\mathrm{d}t} + \gamma r\left( t \right) = -\left( a\left( \mathbf{u}\left(t\right) , \mathbf{u} \left(t\right) \right) - \left( \mathbf{f}\left(t\right) , \mathbf{u}\left( t\right) \right) + \frac{\alpha}{2}\| \mathbf{f} \left(t\right) \|^2 - \frac{ \gamma}{2}\| \mathbf{u} \|_S^2 \right) + \frac{\alpha}{2}\| \mathbf{f}\left(t\right) \|^2 +  \gamma C_r
	%\end{equation*}
	%
	%\begin{equation*}
	%	\begin{aligned}
		%		\frac{ r^{n+1} - r^{n} }{\Delta t} + \gamma r^{n+1} = & -\frac{r^{n+1}}{E\left(\overline{\mathbf{u}}^{n+1} \right) + C_r} \left(  a\left( \overline{\mathbf{u}}^{n+1} , \overline{\mathbf{u}}^{n+1} \right) - \left( \mathbf{f}^{n+1},\overline{\mathbf{u}}^{n+1} \right) + \frac{\alpha}{2}\| \mathbf{f}^{n+1} \|^2 - \frac{\gamma}{2}\| \overline{\mathbf{u}}^{n+1} \|_S^2 \right) \\
		%		& + \frac{\alpha}{2}\| \mathbf{f}^{n+1} \|^2 + \gamma C_r,
		%	\end{aligned}
	%\end{equation*}
	Step 3: Estimate for $ | 1-\xi_j^{m+1} |$. We subtract (\ref{ensemble_subsys3}) from (\ref{SAV_equation}) to obtain the following error equation for $s_j^{n+1} = r_j\left( t^{n+1} \right) - r_j^{n+1} $ at the time $t^{n+1}$:
	\begin{equation}\label{error26}
		\begin{aligned}
			&s_j^{n+1} - s_j^{n} + \Delta t \gamma_j s_j^{n+1}  = -\Delta t \left( \nu \| \nabla \mathbf{u}_{j}\left(t^{n+1} \right) \|^2 + g\| \sqrt{ \mathrm{K}_j } \nabla \phi _j\left(t^{n+1} \right) \|^2  \right.  + \sum_{i=1}^{d-1} \| \sqrt{ \eta_{i,j} } \mathbf{u}_{j}\left(t^{n+1} \right) \cdot \tau_i \|^2_{ \Gamma }  \\
			& - \left( \mathbf{f}_{f,j}\left(t^{n+1} \right) , \mathbf{u}_{j}\left(t^{n+1} \right) \right) - \left( gf_{p,j}\left(t^{n+1} \right) , \phi _j\left(t^{n+1} \right) \right) \\
			& + \frac{\alpha _j}{2} \left( \| \mathbf{f}_{f,j}\left(t^{n+1} \right) \|^2 + \| g f_{p,j}\left(t^{n+1} \right) \|^2 \right)  \left.  - \frac{\gamma_j}{2}\left( \| \mathbf{u}_{j}\left(t^{n+1} \right) \|^2 + gS\| \phi _j\left(t^{n+1} \right) \|^2 \right)  \right) \\
			& + \Delta t \frac{ r_j^{n+1} }{ E_j\left( \overline{\mathbf{u}}_{j}^{n+1} , \overline{\phi}_{j}^{n+1} \right) + C_{r,j} }  \left\{ \nu \| \nabla \overline{\mathbf{u}}_{j}^{n+1} \|^2 + g\| \sqrt{ \mathrm{K}_j } \nabla \overline{\phi}_{j}^{n+1} \|^2   \right. + \sum_{i=1}^{d-1} \| \sqrt{ \eta_{i,j} } \overline{\mathbf{u}}_{j}^{n+1} \cdot \tau_i \|^2_{ \Gamma } \\
			& - \left( \mathbf{f}_{f,j}^{n+1} , \overline{\mathbf{u}}_{j}^{n+1} \right) - \left( gf_{p,j}^{n+1} , \overline{\phi}_{j}^{n+1} \right)   \left. + \frac{\alpha _j}{2} \left( \| \mathbf{f}_{f,j}^{n+1} \|^2 + \| g f_{p,j}^{n+1} \|^2 \right) - \frac{\gamma_j}{2}\left( \| \overline{\mathbf{u}}_{j}^{n+1} \|^2 + gS\| \overline{\phi}_{j}^{n+1} \|^2 \right)  \right\} + T_{j,1}^n,
		\end{aligned}
	\end{equation}
	where
	\begin{equation}\label{error27}
		T_{j,1}^n = r_j\left( t^{n+1} \right) - r_j\left( t^{n} \right) - \Delta t \frac{\mathrm{d} r_j\left( t^{n+1} \right)}{\mathrm{d}t} ,\quad n\leq m.
	\end{equation}
	
	Next, we will bound each term on the right-hand side of (\ref{error26}) as follows. By direct calculation, we obtain
	\begin{equation}\label{error25}
		\begin{aligned}
			r_{j,tt} = \left( \mathbf{u}_{j,t} , \mathbf{u}_{j,t} \right) + \left( \mathbf{u}_j , \mathbf{u}_{j,tt} \right) + gS\left( \phi_{j,t} , \phi_{j,t} \right) + gS\left( \phi_j , \phi_{j,tt} \right),
		\end{aligned}
	\end{equation}
	then, it follows from (\ref{error27}) that
	\begin{equation}\label{error28}
		\begin{aligned}
			|T_{j,1}^n| & \leq C\Delta t \int_{t^n}^{t^{n+1}}| r_{j,tt}\left(s\right) | \mathrm{d}s  \leq C\Delta t \int_{t^n}^{t^{n+1}} \left( \| \mathbf{u}_j \|^2 + \| \mathbf{u}_{j,t} \|^2 + \| \mathbf{u}_{j,tt} \|^2 + \| \phi_j \|^2 + \| \phi_{j,t} \|^2 + \| \phi_{j,tt} \|^2 \right) \mathrm{d}s .
		\end{aligned}
	\end{equation}
	By adding and subtracting the term $ \frac{ r_j^{n+1} }{ E_j\left( \overline{\mathbf{u}}_{j}^{n+1} , \overline{\phi}_{j}^{n+1} \right) + C_{r,j} }  \nu \| \nabla \mathbf{u}_j \left( t^{n+1} \right) \|^2 $, we get
\begin{small}
	\begin{equation}\label{error29}
		\begin{aligned}
			&\left|  \nu \| \nabla \mathbf{u}_j \left( t^{n+1} \right) \|^2 - \frac{ r_j^{n+1} }{ E_j\left( \overline{\mathbf{u}}_{j}^{n+1} , \overline{\phi}_{j}^{n+1} \right) + C_{r,j} }  \nu\| \nabla \overline{\mathbf{u}}_j^{n+1} \|^2  \right|  \leq \nu \| \nabla \mathbf{u}_j \left( t^{n+1} \right) \|^2 \left| 1 - \frac{ r_j^{n+1} }{ E_j\left( \overline{\mathbf{u}}_{j}^{n+1} , \overline{\phi}_{j}^{n+1} \right) + C_{r,j} } \right| \\
			&+ \frac{ r_j^{n+1} }{ E_j\left( \overline{\mathbf{u}}_{j}^{n+1} , \overline{\phi}_{j}^{n+1} \right) + C_{r,j} } \left| \nu \| \nabla \mathbf{u}_j \left( t^{n+1} \right) \|^2 - \nu\| \nabla \overline{\mathbf{u}}_j^{n+1} \|^2 \right| =: P_{11}^n + P_{12}^n ,
		\end{aligned}
	\end{equation}
\end{small}
	We can deduce from Theorem \ref{stableTh}, (\ref{error11}) and $E_j\left( \mathbf{v} , \psi \right)>0,\forall \mathbf{v}$ and $\psi$ that
	\begin{equation}\label{error30}
		\begin{aligned}
			P_{11}^n  &\leq C \left| 1 - \frac{ r_j^{n+1} }{ E_j\left( \overline{\mathbf{u}}_{j}^{n+1} , \overline{\phi}_{j}^{n+1} \right) + C_{r,j} } \right| \\
			& \leq \left| \frac{r_j\left(t^{n+1}\right)}{E_j\left(\mathbf{u}_j\left( t^{n+1}\right), \phi_{j}\left( t^{n+1}\right) \right) + C_{r,j}} - \frac{r_j^{n+1}}{E_j\left(\mathbf{u}_j\left( t^{n+1}\right), \phi_{j}\left( t^{n+1}\right) \right) + C_{r,j}}   \right| \\
			&+ C\left| \frac{r_j^{n+1}}{E_j\left(\mathbf{u}_j\left( t^{n+1}\right), \phi_{j}\left( t^{n+1}\right) \right) + C_{r,j}} - \frac{ r_j^{n+1} }{ E_j\left( \overline{\mathbf{u}}_{j}^{n+1} , \overline{\phi}_{j}^{n+1} \right) + C_{r,j} } \right|   \\
			& \leq C\left( \left| s_j^{n+1} \right| + \left|  E_j\left(\mathbf{u}_j\left( t^{n+1}\right), \phi_{j}\left( t^{n+1}\right) \right) - E_j\left( \overline{\mathbf{u}}_{j}^{n+1} , \overline{\phi}_{j}^{n+1} \right) \right| \right) \leq C \left( \left| s_j^{n+1} \right| + \| \overline{\mathbf{e}}_{j,u}^{n+1} \|  + \| \overline{\mathbf{e}}_{j,\phi}^{n+1} \|  \right) ,
		\end{aligned}
	\end{equation}
	and
	\begin{equation}\label{error31}
		\begin{aligned}
			P_{12}^n \leq C\left(  \| \nabla \mathbf{u}_j \left( t^{n+1} \right) \| +  \| \nabla \overline{\mathbf{u}}_j^{n+1} \| \right)  \| \nabla \overline{\mathbf{e}}_{j,u}^{n+1} \| .
		\end{aligned}
	\end{equation}
	In a similar way, we have
	\begin{equation}\label{error31_1}
		\begin{aligned}
			&\left|  \| \sqrt{ \mathrm{K}_j } \nabla \phi _j\left(t^{n+1} \right) \|^2 - \frac{ r_j^{n+1} }{ E_j\left( \overline{\mathbf{u}}_{j}^{n+1} , \overline{\phi}_{j}^{n+1} \right) + C_{r,j} } \| \sqrt{ \mathrm{K}_j } \nabla \overline{\phi}_{j}^{n+1} \|^2  \right| \\
			&\leq C\| \nabla \phi _j\left(t^{n+1} \right) \|^2 \left| 1 - \frac{ r_j^{n+1} }{ E_j\left( \overline{\mathbf{u}}_{j}^{n+1} , \overline{\phi}_{j}^{n+1} \right) + C_{r,j} } \right| \\
			&+ \frac{ r_j^{n+1} }{ E_j\left( \overline{\mathbf{u}}_{j}^{n+1} , \overline{\phi}_{j}^{n+1} \right) + C_{r,j} } \left| \| \sqrt{ \mathrm{K}_j } \nabla \phi _j\left(t^{n+1} \right) \|^2 - \| \sqrt{ \mathrm{K}_j } \nabla \overline{\phi}_{j}^{n+1} \|^2 \right| \\
			&\leq C \left( \left| s_j^{n+1} \right| + \| \overline{\mathbf{e}}_{j,u}^{n+1} \|  + \| \overline{\mathbf{e}}_{j,\phi}^{n+1} \|  \right) + C\left(  \| \nabla \phi_j \left( t^{n+1} \right) \| +  \| \nabla \overline{\phi}_j^{n+1} \| \right)  \| \nabla \overline{\mathbf{e}}_{j,\phi}^{n+1} \|,
		\end{aligned}
	\end{equation}
	and
	\begin{equation}\label{error31_2}
		\begin{aligned}
			&\left|  \sum_{i=1}^{d-1} \| \sqrt{ \eta_{i,j} } \mathbf{u}_{j}\left(t^{n+1} \right) \cdot \tau_i \|^2_{ \Gamma } - \frac{ r_j^{n+1} }{ E_j\left( \overline{\mathbf{u}}_{j}^{n+1} , \overline{\phi}_{j}^{n+1} \right) + C_{r,j} } \sum_{i=1}^{d-1} \| \sqrt{ \eta_{i,j} } \overline{\mathbf{u}}_{j}^{n+1} \cdot \tau_i \|^2_{ \Gamma }  \right| \\
			& \leq   C\left( \left| s_j^{n+1} \right| + \| \overline{\mathbf{e}}_{j,u}^{n+1} \|  + \| \overline{\mathbf{e}}_{j,\phi}^{n+1} \| \right)  + C\left(  \| \nabla \mathbf{u}_j \left( t^{n+1} \right) \| +  \| \nabla \overline{\mathbf{u}}_j^{n+1} \| \right)  \sum_{i=1}^{d-1} \|  \overline{e}_{j,u}^{n+1} \cdot \tau_i \|^2_{ \Gamma },
		\end{aligned}
	\end{equation}
	and
	\begin{equation}\label{error32}
		\begin{aligned}
			&\left|  \left( \mathbf{f}_{f,j}\left(t^{n+1} \right) , \mathbf{u}_{j}\left(t^{n+1} \right) \right) - \frac{ r_j^{n+1} }{ E_j\left( \overline{\mathbf{u}}_{j}^{n+1} , \overline{\phi}_{j}^{n+1} \right) + C_{r,j} } \left( \mathbf{f}_{f,j}^{n+1} , \overline{\mathbf{u}}_{j}^{n+1} \right) \right| \\
			&\leq \left( \mathbf{f}_{f,j}\left(t^{n+1} \right) , \mathbf{u}_{j}\left(t^{n+1} \right) \right) \left| 1 - \frac{ r_j^{n+1} }{ E_j\left( \overline{\mathbf{u}}_{j}^{n+1} , \overline{\phi}_{j}^{n+1} \right) + C_{r,j} } \right|  \\
			&+ \frac{ r_j^{n+1} }{ E_j\left( \overline{\mathbf{u}}_{j}^{n+1} , \overline{\phi}_{j}^{n+1} \right) + C_{r,j} } \left| \left( \mathbf{f}_{f,j}^{n+1} , \mathbf{u}_{j}\left(t^{n+1} \right) \right)  - \left( \mathbf{f}_{f,j}^{n+1} , \overline{\mathbf{u}}_{j}^{n+1} \right) \right| \\
			&\leq C \left( \left| s_j^{n+1} \right| + \| \overline{\mathbf{e}}_{j,u}^{n+1} \|  + \| \overline{\mathbf{e}}_{j,\phi}^{n+1} \|  \right) + C\| \mathbf{f}_{f,j}^{n+1} \|  \| \overline{\mathbf{e}}_{j,u}^{n+1} \| ,
		\end{aligned}
	\end{equation}
	and
	\begin{equation}\label{error32_1}
    \begin{split}
		&\left|  \left( gf_{p,j}\left(t^{n+1} \right) , \phi _j\left(t^{n+1} \right) \right) - \frac{ r_j^{n+1} }{ E_j\left( \overline{\mathbf{u}}_{j}^{n+1} , \overline{\phi}_{j}^{n+1} \right) + C_{r,j} } \left( gf_{p,j}^{n+1} , \overline{\phi}_{j}^{n+1} \right) \right| \\
& \leq  \left( \left| s_j^{n+1} \right| + \| \overline{\mathbf{e}}_{j,u}^{n+1} \|  + \| \overline{\mathbf{e}}_{j,\phi}^{n+1} \|  \right) + \| \mathbf{f}_{p,j}^{n+1} \|  \| \overline{\mathbf{e}}_{j,\phi}^{n+1} \| ,
	\end{split}
    \end{equation}
	and
	\begin{equation}\label{error35}
		\frac{\alpha _j}{2} \left( \| \mathbf{f}_{f,j}^{n+1} \|^2 + \| g f_{p,j}^{n+1} \|^2 \right) \left| 1 - \frac{ r_j^{n+1} }{ E_j\left( \overline{\mathbf{u}}_{j}^{n+1} , \overline{\phi}_{j}^{n+1} \right) + C_{r,j} } \right|
		\leq C \left( \left| s_j^{n+1} \right| + \| \overline{\mathbf{e}}_{j,u}^{n+1} \|  + \| \overline{\mathbf{e}}_{j,\phi}^{n+1} \|  \right) ,
	\end{equation}
	and
	\begin{equation}\label{error36}
		\begin{aligned}
			& \left| \frac{\gamma_j}{2} \| \mathbf{u}_{j}\left(t^{n+1} \right) \|^2 - \frac{ r_j^{n+1} }{ E_j\left( \overline{\mathbf{u}}_{j}^{n+1} , \overline{\phi}_{j}^{n+1} \right) + C_{r,j} } \frac{\gamma_j}{2} \| \overline{\mathbf{u}}_{j}^{n+1} \|^2  \right| \\
			& \leq \frac{\gamma_j}{2} \| \mathbf{u}_{j}\left(t^{n+1} \right) \|^2  \left| 1 - \frac{ r_j^{n+1} }{ E_j\left( \overline{\mathbf{u}}_{j}^{n+1} , \overline{\phi}_{j}^{n+1} \right) + C_{r,j} } \right| + \frac{ r_j^{n+1} }{ E_j\left( \overline{\mathbf{u}}_{j}^{n+1} , \overline{\phi}_{j}^{n+1} \right) + C_{r,j} } \left| \| \mathbf{u}_{j}\left(t^{n+1} \right) \|^2 - \| \overline{\mathbf{u}}_{j}^{n+1} \|^2  \right|  \\
			& \leq C \left( \left| s_j^{n+1} \right| + \| \overline{\mathbf{e}}_{j,u}^{n+1} \|  + \| \overline{\mathbf{e}}_{j,\phi}^{n+1} \|  \right) + C\left(  \| \mathbf{u}_j \left( t^{n+1} \right) \| +  \| \overline{\mathbf{u}}_j^{n+1} \| \right)  \| \overline{\mathbf{e}}_{j,u}^{n+1} \| ,
		\end{aligned}
	\end{equation}
	and
	\begin{equation}\label{error36_1}
		\begin{aligned}
			& \left| \frac{\gamma_j}{2} \| \phi _j\left(t^{n+1} \right) \|^2 - \frac{ r_j^{n+1} }{ E_j\left( \overline{\mathbf{u}}_{j}^{n+1} , \overline{\phi}_{j}^{n+1} \right) + C_{r,j} } \frac{\gamma_j}{2} \| \overline{\phi}_{j}^{n+1} \|^2  \right| \\
			& \leq C \left( \left| s_j^{n+1} \right| + \| \overline{\mathbf{e}}_{j,u}^{n+1} \|  + \| \overline{\mathbf{e}}_{j,\phi}^{n+1} \|  \right) + C\left(  \| \phi_j \left( t^{n+1} \right) \| +  \| \overline{\phi}_j^{n+1} \| \right)  \| \overline{\mathbf{e}}_{j,\phi}^{n+1} \| .
		\end{aligned}
	\end{equation}
	
	Now, substituting (\ref{error28})-(\ref{error36_1}) into (\ref{error26}) gives
	\begin{equation}\label{error37}
		\begin{aligned}
			&s_j^{n+1} - s_j^{n} + \Delta t \gamma_j s_j^{n+1}  \leq C\Delta t \left( |s_j^{n+1}| + \| \overline{\mathbf{e}}_{j,u}^{n+1} \|  + \| \overline{\mathbf{e}}_{j,\phi}^{n+1} \|  \right) + C\Delta t \left( \| \nabla \phi_j \left( t^{n+1} \right) \| +  \| \nabla \overline{\phi}_j^{n+1} \| \right)  \| \nabla \overline{\mathbf{e}}_{j,\phi}^{n+1} \| \\
			& + C\Delta t \left(  \| \nabla \mathbf{u}_j \left( t^{n+1} \right) \| +  \| \nabla \overline{\mathbf{u}}_j^{n+1} \|  \right)  \left(  \| \nabla \overline{\mathbf{e}}_{j,u}^{n+1} \| + \sum_{i=1}^{d-1} \|  \overline{e}_{j,u}^{n+1} \cdot \tau_i \|^2_{ \Gamma } \right)   \\
			& + C\Delta t \int_{t^n}^{t^{n+1}} \left( \| \mathbf{u}_j \|^2 + \| \mathbf{u}_{j,t} \|^2 + \| \mathbf{u}_{j,tt} \|^2 + \| \phi_j \|^2 + \| \phi_{j,t} \|^2 + \| \phi_{j,tt} \|^2 \right) \mathrm{d}s, \forall n\leq m.
		\end{aligned}
	\end{equation}
	Taking the sum of (\ref{error37}) for $n$ from $k-1$ to $m$ and using (\ref{error9}), (\ref{error11}), (\ref{error24}), it yields
	\begin{equation}\label{error38}
		\begin{aligned}
			&| s_j^{m+1} | \leq C\Delta t \sum_{n=k-1}^{m} \left( |s_j^{n+1}| + \| \overline{\mathbf{e}}_{j,u}^{n+1} \|  + \| \overline{\mathbf{e}}_{j,\phi}^{n+1} \|  \right) + C\Delta t \sum_{n=k-1}^{m} \left( \| \nabla \phi_j \left( t^{n+1} \right) \| +  \| \nabla \overline{\phi}_j^{n+1} \| \right)  \| \nabla \overline{\mathbf{e}}_{j,\phi}^{n+1} \|  \\
			& + \Delta t \sum_{n=k-1}^{m} \left(  \| \nabla \mathbf{u}_j \left( t^{n+1} \right) \| +  \| \nabla \overline{\mathbf{u}}_j^{n+1} \|  \right)  \left(  \| \nabla \overline{\mathbf{e}}_{j,u}^{n+1} \| + \sum_{i=1}^{d-1} \|  \overline{e}_{j,u}^{n+1} \cdot \tau_i \|^2_{ \Gamma } \right)  + C\Delta t \\
			&  \leq C\Delta t \sum_{n=k-1}^{m}   |s_j^{n+1}| + C\sqrt{ C_2C_0^{2k+2} } \left( \Delta t\right)^{k} + C\Delta t.
		\end{aligned}
	\end{equation}
	By the use of Lemma \ref{gronwall2} with $\Delta t < \frac{1}{2C}$, it gives
	\begin{equation}\label{error39}
		| s_j^{m+1} |  \leq  C\exp \left( (1-C\Delta t)^{-1} CT \right) \left( 1 + \sqrt{ C_2C_0^{2k+2} } \left( \Delta t\right)^{k-1} \right)\Delta t
		\leq C_3\left( 1 +  \sqrt{ C_2C_0^{2k+2} } \left( \Delta t\right)^{k-1} \right)\Delta t
	\end{equation}
	where the positive constant $C_3$ is not dependent on $C_0$ and $\Delta t$, and can be defined as
	\begin{equation}\label{error40}
		C_3 := C\max \{ \exp \left( 2T \right) , 2 \},
	\end{equation}
	then $\Delta t < \frac{ 1 }{2C}$ can be guaranteed by $\Delta t < \frac{1}{C_3}$.
	
	Hence, it follows from (\ref{error24}), (\ref{error30}) and (\ref{error39}) that
	\begin{equation}\label{error41}
		\begin{aligned}
			& |1-\xi _j^{m+1}|  \leq C\left( \left| s_j^{m+1} \right| + \left|  E_j\left(\mathbf{u}_j\left( t^{m+1}\right), \phi_{j}\left( t^{m+1}\right) \right) - E_j\left( \overline{\mathbf{u}}_{j}^{m+1} , \overline{\phi}_{j}^{m+1} \right) \right| \right) \\
  & \leq C \left( \left| s_j^{m+1} \right| + \| \overline{\mathbf{e}}_{j,u}^{m+1} \|  + \| \overline{\mathbf{e}}_{j,\phi}^{m+1} \|  \right) \\
			& \leq C\Delta t \left( \sqrt{ C_2C_0^{2k+2} } \left( \Delta t\right)^{k-1} +  C_3\left( 1 +  \sqrt{ C_2C_0^{2k+2} } \left( \Delta t\right)^{k-1} \right) \right) \leq C_4  \left( C_0^{k+1} \left(\Delta t \right)^{k-1} + 1 \right) \Delta t ,
		\end{aligned}
	\end{equation}
	where the positive constant $C_4$ is not dependent on $C_0$ and $\Delta t$. For simplicity, suppose that $C_4 > \max \{ C_2,C_3,C_2 + C_2C_3,1 \}$.
	
	We can deduce $|1-\xi_j^{m+1} | \leq C_0\Delta t$ from (\ref{error41}) if we define $C_0$ such that
	\begin{equation}\label{error42}
		C_4  \left( C_0^{k+1} \left(\Delta t \right)^{k-1} + 1 \right) \leq C_0.
	\end{equation}
	For the cases $2 \leq k \leq 4$, (\ref{error42}) can hold if we choose $C_0 = 2C_4 >1$ and $\Delta t \leq 1/C_0^{k+1}$:
	\begin{displaymath}
		C_4  \left( C_0^{k+1} \left(\Delta t \right)^{k-1} + 1 \right) \leq C_4  \left( C_0^{k+1} \Delta t  + 1 \right) \leq 2C_4 = C_0 .
	\end{displaymath}
	In summary, under the condition $\Delta t \leq 1/C_0^{k+1}\left(2\leq k \leq 4\right)  $, we have $|1-\xi_j^{m+1}| \leq C_0\Delta t$. Now, the induction process for (\ref{error2}) is completed.
	
	Finally, due to (\ref{errorCon1}), there remains to present $\| \mathbf{e}^{m+1}_{j,u} \| , \| \mathbf{e}^{m+1}_{j,\phi} \| \leq C\Delta t^k$.
	
	We deduce from (\ref{error8}) and $\mathbf{u}_j^{m+1} = \eta_{k,j}^{m+1}\overline{\mathbf{u}}_j^{m+1}$ that
	\begin{equation}\label{error44}
		\begin{aligned}
			\| \mathbf{u}_j^{m+1} - \overline{\mathbf{u}}_j^{m+1} \| \leq | 1-\eta_{k,j}^{m+1} | \| \overline{\mathbf{u}}_j^{m+1} \|  \leq M_k | 1-\eta_{k,j}^{m+1} |.
		\end{aligned}
	\end{equation}
	In addition, we can derive from (\ref{error1}) that
	\begin{equation}\label{error45}
		| 1 - \eta_{k,j}^{m+1} |  \leq C_0^{k+1} \left( \Delta t \right)^{k+1} .
	\end{equation}
	Then, under the condition on $\Delta t$, it follows from (\ref{errorCon1}), (\ref{error44}) and (\ref{error45}) that
	\begin{equation}\label{error46}
		\| \mathbf{e}_{j,u}^{m+1} \|  \leq \| \mathbf{u}_j^{m+1} - \overline{\mathbf{u}}_j^{m+1} \|  + \| \overline{\mathbf{e}}_j^{m+1} \|  \leq \sqrt{ C_2C_0^{2k+2} } \left( \Delta t\right)^{k} + M_k C_0^{k+1} \left( \Delta t \right)^{k+1} \leq C\left( \Delta t \right)^k.
	\end{equation}
	In addition, we can also deduce from (\ref{error9}), (\ref{error24}), and (\ref{error45}) that
	\begin{equation}
		\begin{aligned}
			&\Delta t \sum_{i=k-1}^{m}\| \nabla C_k^{\beta} \left( \mathbf{e}_{j,u}^{i+1} \right)  \|^2 \leq 2\Delta t \sum_{i=k-1}^{m} \| \nabla C_k^{\beta}\left( \overline{\mathbf{e}}_{j,u}^{i+1} \right)  \|^2 + 2\Delta t \sum_{i=k-1}^{m} \| \nabla C_k^{\beta} \left( \overline{\mathbf{u}}_j^{i+1} - \mathbf{u}_j^{i+1} \right)  \|^2 \\
			& \leq 2\Delta t \sum_{i=k-1}^{m} \| \nabla C_k^{\beta} \left( \overline{\mathbf{e}}_{j,u}^{i+1} \right)  \|^2 + 2\Delta t \sum_{i=k-1}^{m} | 1-\eta_{k,j}^{i+1} |^2 \| \nabla C_k^{\beta} \left( \overline{\mathbf{u}}_j^{i+1} \right)  \|^2 \\
			&\leq 2\Delta t \sum_{i=k-1}^{m} \| \nabla C_k^{\beta} \left( \overline{\mathbf{e}}_{j,u}^{i+1} \right)  \|^2 + 2\max_{i}\left\{ | 1-\eta_{k,j}^{i+1} |^2 \right\}\Delta t \sum_{i=k-1}^{m}  \| \nabla  C_k^{\beta} \left( \overline{\mathbf{u}}_j^{i+1} \right) \|^2 \\
			&\leq 2\Delta t \sum_{i=k-1}^{m} \| \nabla C_k^{\beta} \left( \overline{\mathbf{e}}_{j,u}^{i+1} \right)  \|^2 + CC_T \max_{i}\left\{ | 1-\eta_{k,j}^{i+1} |^2 \right\} \\
			& \leq  C_2C_0^{2k+2}  \left( \Delta t\right)^{2k} + CC_{T} C_0^{2k+2} \left( \Delta t \right)^{2k+2} \leq C\left( \Delta t \right)^{2k}.
		\end{aligned}
	\end{equation}
%	\begin{equation}
%		\begin{aligned}
%			&\Delta t \sum_{i=k-1}^{m}\| \nabla \mathbf{e}_{j,u}^{i+1}  \|^2 \leq 2\Delta t \sum_{i=k-1}^{m} \| \nabla \overline{\mathbf{e}}_{j,u}^{i+1}  \|^2 + 2\Delta t \sum_{i=k-1}^{m} \| \nabla \left( \overline{\mathbf{u}}_j^{i+1} - \mathbf{u}_j^{i+1} \right)  \|^2 \\
%			& \leq 2\Delta t \sum_{i=k-1}^{m} \| \nabla \overline{\mathbf{e}}_{j,u}^{i+1}  \|^2 + 2\Delta t \sum_{i=k-1}^{m} | 1-\eta_{k,j}^{i+1} |^2 \| \nabla  \overline{\mathbf{u}}_j^{i+1} \|^2 \\
%			&\leq 2\Delta t \sum_{i=k-1}^{m} \| \nabla \overline{\mathbf{e}}_{j,u}^{i+1}  \|^2 + 2\max_{i}\left\{ | 1-\eta_{k,j}^{i+1} |^2 \right\}\Delta t \sum_{i=k-1}^{m}  \| \nabla  \overline{\mathbf{u}}_j^{i+1} \|^2 \\
%			&\leq 2\Delta t \sum_{i=k-1}^{m} \| \nabla \overline{\mathbf{e}}_{j,u}^{i+1}  \|^2 + CC_T \max_{i}\left\{ | 1-\eta_{k,j}^{i+1} |^2 \right\}  \leq  C_2C_0^{2k+2}  \left( \Delta t\right)^{2k} + CC_{T} C_0^{2k+2} \left( \Delta t \right)^{2k+2} \leq C\left( \Delta t \right)^{2k}.
%		\end{aligned}
%	\end{equation}
	Similarly, we can obtain $ \| \mathbf{e}_{j,\phi}^{m+1} \|^2 + \Delta t \sum_{i=k-1}^{m}\| \nabla C_k^{\beta}\left(  \mathbf{e}_{j,\phi}^{i+1} \right)  \|^2 \leq C\left( \Delta t \right)^{2k} $. Herein, we complete the proof.
	
\end{proof}

\section{Numerical results}
\label{sec:experiments}
\subsection{Accuracy test}

We first evaluate our proposed method on the coupled Navier--Stokes--Darcy system~\eqref{para_stokes}-\eqref{para_interface}. The problem is defined on a rectangular domain $\Omega = [0, 1] \times [-1, 1]$, which is partitioned into the porous medium domain $\Omega_p = [0, 1] \times [-1, 0]$ and the free-flow domain $\Omega_f = [0, 1] \times [0, 1]$.
The physical parameters are non-dimensionalized to unity: $\alpha_{BJ} = \nu = g = S = 1$, and the permeability tensor is isotropic with $\mathrm{K} = k\mathbf{I}$.
To verify the convergence order, we construct the source terms and boundary conditions based on the following exact analytical solution:
\begin{displaymath}
	\left\{
	\begin{aligned}
		\phi(x,y,t) &= (\sin(\pi x)\cos(\pi y)+1)e^t, \quad p(x,y,t) = \cos(\pi x)\cos(\pi y)e^t \\
		\mathbf{u}(x,y,t) &= [\pi\sin(\pi x)cos(\pi y)+x+2y, -\pi\cos(\pi x)\sin(\pi y)-y-2x]^{T}e^t.
	\end{aligned}
	\right.
\end{displaymath}
Then the source terms and Dirichlet boundary
conditions of the model are chosen such that the above functions are the exact solutions of the model. It is acknowledged that this solution was artificially constructed for the purpose of testing the algorithm. Therefore, it does not satisfy the homogeneous BJS interface condition exactly. The introduction of boundary residuals as source terms into the algorithm is therefore required.
In order to check the convergence order, we use uniform grids with grid size $h=1/8  ,1/16  ,1/24  ,1/32  $. The terminal time is set to $T=0.5$. The time step size
$\Delta t$ is set to $\Delta t= h$ for the second-order scheme. We examine our scheme by a set of random parameters with $J=3$, $k_j=[1.1, 2.1, 3.3]$.

Tables \ref{tab:convergence-2nd} and \ref{tab:convergence-4th} present the numerical results for the proposed algorithms. To assess temporal accuracy, we evaluate the $L^2$-norm errors of the velocity ($\mathbf{u}$), pressure ($p$), and hydraulic head $\phi$  under uniform time step refinement ($\Delta t = h$), utilizing $j \in \{1, 2, 3\}$ as representative ensemble realizations. As shown in Table \ref{tab:convergence-2nd}, the GSAV-GBDF2-Ensemble scheme with $\beta=3$ exhibits consistent second-order convergence across all tested realizations, confirming that the ensemble averaging process introduces no degradation to the theoretical temporal accuracy. Similarly, the data in Table \ref{tab:convergence-4th} empirically validate that the GSAV-GBDF4-Ensemble scheme with $\beta=3$ preserves optimal fourth-order accuracy, rendering it highly suitable for long-time simulations requiring stringent error control. Consequently, these numerical experiments robustly corroborate the theoretical convergence properties of the proposed ensemble algorithms, highlighting their ability to efficiently achieve high-order accuracy.

\begin{table}[ht]
	\caption{Convergence results of GSAV-GBDF2-Ensemble scheme, $\Delta t=h$}
	\centering
	\begin{scriptsize}
		\centering
		\begin{tabular}{c||c|c||c|c||c|c}
			\hline
			\multicolumn{7}{c}{2nd-order algorithm, $j=1$}\\
			% \Xhline{1pt}
			\hline
			h & $||\mathbf{u}-\mathbf{u}_{h}||$ & rate & $||p-p_{h}||$ & rate& $||\phi-\phi_{h}||$ & rate  \\
			%\Xhline{1pt}
			\hline
			1/4   & 6.81e-02 &  $\backslash$ & 1.58e-02 & $\backslash$ & 1.94e-01  & $\backslash$   \\
			1/8   & 1.68e-02 & 2.02    & 4.06e-03 & 1.96    & 4.78e-02   & 2.02       \\
			1/16  & 4.04e-03 & 2.05    & 1.01e-03 & 2.00    & 1.19e-02   & 2.00       \\
			1/32  & 1.02e-03 & 2.00    & 2.48e-04 & 2.02    & 3.01e-03   & 1.99      \\
			\hline
			\multicolumn{7}{c}{2nd-order algorithm, $j=2$}\\
			% \Xhline{1pt}
			\hline
			h &  $||\mathbf{u}-\mathbf{u}_{h}||$ & rate & $||p-p_{h}||$ & rate& $||\phi-\phi_{h}||$ & rate  \\
			%\Xhline{1pt}
			\hline
			1/4 & 6.55e-02 & $\backslash$ &1.61e-02 & $\backslash$ &2.01e-01 & $\backslash$\\
			1/8 & 1.58e-02 & 2.05 &  4.02e-03 & 2.00 & 4.81e-02 & 2.06 \\
			1/16 & 4.01e-03 & 1.98 &  9.96e-04 & 2.01 & 1.21e-02 & 1.99 \\
			1/32 & 9.915e-04 & 2.02 &  2.51e-04 & 1.99 & 3.02e-03 & 2.00 \\
			\hline
			\multicolumn{7}{c}{2nd-order algorithm, $j=3$}\\
			% \Xhline{1pt}
			\hline
			h &  $||\mathbf{u}-\mathbf{u}_{h}||$ & rate & $||p-p_{h}||$ & rate& $||\phi-\phi_{h}||$ & rate  \\
			%\Xhline{1pt}
			\hline
			1/4 & 6.88e-02 & $\backslash$ &1.65e-02 & $\backslash$ &2.10e-01 & $\backslash$\\
			1/8 & 1.71e-02 & 2.01 & 4.08e-03 & 2.02 & 4.99e-02 & 2.07 \\
			1/16 & 4.06e-03 & 2.07 & 1.01e-03 & 2.01 & 1.29e-02 & 1.95 \\
			1/32 & 1.02e-03 & 1.99 & 2.48e-04 & 2.03 & 3.14e-03 & 2.03 \\
			\hline
		\end{tabular}
	\end{scriptsize}\label{tab:convergence-2nd}
\end{table}

\begin{table}[ht]
	\caption{Convergence results of GSAV-GBDF4-Ensemble scheme, $\Delta t=h$}
	\centering
	\begin{scriptsize}
		\centering
		\begin{tabular}{c||c|c||c|c||c|c}
			\hline
			\multicolumn{7}{c}{4th-order scheme, $j=1$}\\
			\hline
			h & $||\mathbf{u}-\mathbf{u}_{h}||$ & rate & $||p-p_{h}||$ & rate& $||\phi-\phi_{h}||$ & rate  \\
			\hline
			1/4   & 6.59e-02 & $\backslash$ & 2.79e-02 & $\backslash$ & 5.47e-02  & $\backslash$   \\
			1/8   & 4.25e-03 & 3.95 & 1.77e-03 & 3.98 & 3.83e-03  & 3.84 \\
			1/16  & 2.59e-04 & 4.04 & 1.01e-04 & 4.13 & 2.38e-04  & 4.01 \\
			1/32  & 1.61e-05 & 4.01 & 6.29e-06 & 4.01 & 1.48e-05  & 4.01 \\
			\hline
			\multicolumn{7}{c}{4th-order scheme, $j=2$}\\
			\hline
			h &  $||\mathbf{u}-\mathbf{u}_{h}||$ & rate & $||p-p_{h}||$ & rate& $||\phi-\phi_{h}||$ & rate  \\
			\hline
			1/4 & 6.12e-02 & $\backslash$ & 2.82e-02 & $\backslash$ & 6.01e-02 & $\backslash$ \\
			1/8 & 3.85e-03 & 3.99 & 1.78e-03 & 3.99 & 3.78e-03 & 3.99 \\
			1/16 & 2.38e-04 & 4.02 & 1.10e-04 & 4.02 & 2.36e-04 & 4.00 \\
			1/32 & 1.47e-05 & 4.02 & 6.89e-06 & 4.00 & 1.46e-05 & 4.01 \\
			\hline
			\multicolumn{7}{c}{4th-order scheme, $j=3$}\\
			\hline
			h &  $||\mathbf{u}-\mathbf{u}_{h}||$ & rate & $||p-p_{h}||$ & rate& $||\phi-\phi_{h}||$ & rate  \\
			\hline
			1/4 & 6.48e-02 & $\backslash$ & 2.75e-02 & $\backslash$ & 5.98e-02 & $\backslash$ \\
			1/8 & 4.08e-03 & 3.99 & 1.79e-03 & 3.94 & 3.72e-03 & 4.01 \\
			1/16 & 2.53e-04 & 4.01 & 1.11e-04 & 4.01 & 2.27e-04 & 4.04 \\
			1/32 & 1.56e-05 & 4.02 & 6.86e-06 & 4.02 & 1.40e-05 & 4.02 \\
			\hline
		\end{tabular}
	\end{scriptsize}\label{tab:convergence-4th}
\end{table}

Next, we investigate the computational efficiency of the proposed GSAV-GBDF2-Ensemble scheme. Numerical experiments are conducted for varying numbers of realizations, $J \in \{1, 100, 500, 1000\}$, and the corresponding CPU elapsed times are recorded. For benchmarking, we compare our method against two alternatives: the GSAV-Individual approach (where each realization is solved independently using the BDF2 method) and a traditional fully implicit solver. The performance metrics are summarized in Table \ref{eoc_time_compare}. The results demonstrate that the GSAV-Ensemble scheme exhibits a significant advantage in terms of time efficiency as the number of realizations $J$ increases. Compared to the GSAV-Individual approach, the ensemble strategy achieves a consistent time-saving gain, reaching approximately 33.1\% for $J = 1000$. More notably, when compared to the Traditional method, the proposed algorithm reduces the computational cost by over 90\% across all tested cases, with the efficiency gain peaking at 95.0\% for large-scale simulations ($J = 1000$). This substantial reduction in CPU time confirms the superior scalability of the ensemble framework, which effectively bypasses the heavy computational burden associated with traditional implicit iterations in stochastic simulations.

\begin{table}[ht]
	\caption{Computational Efficiency and Speed-up Analysis ($h=1/32$)}
	\centering
	\small
	\begin{tabular}{lcccc}
		\toprule
		$J$ (Realizations) & 1 & 100 & 500 & 1000 \\
		\midrule
		GSAV-Ensemble (s)   & 3.1 & 256.5 & 1281.9 & 2575.1 \\
		GSAV-Individual (s) & 3.1 & 310.1 & 1714.7 & 3847.2 \\
		Traditional (s)     & 32.8 & 3488.2 & 20122.1 & 51021.4 \\
		\midrule
		\textbf{VS Individual (Gain)} & 0\% & 17.3\% & 25.2\% & 33.1\% \\
		\textbf{VS Traditional (Gain)} & 90.5\% & 92.6\% & 93.6\% & 95.0\% \\
		\bottomrule
	\end{tabular}
	\label{eoc_time_compare}
\end{table}

\subsection{Simulation of subsurface flow in a simplified Y-shaped domain}

In this example, we apply the proposed method to a local simulation of subsurface flow, specifically modeling the merging of two conduits into a single channel within a coupled porous medium. As illustrated in Fig. \ref{Y_style_domain_figure}, the computational domain $\Omega$ is a unit square partitioned into a porous media subdomain, $\Omega_p$, and a free-flow subdomain, $\Omega_f$. The region $\Omega_f$ is defined by a decagon with vertices $\overline{ABCDEFGHIJ}$. Consequently, the porous-media domain is defined as $\Omega_p = \Omega \setminus \Omega_f$. The boundary segments are specified as $S_0 = \overline{AB} \cup \overline{JA}$ for the inlets, and $S_1 = \overline{DE}$ and $S_2 = \overline{GH}$ for the respective outlets.
\begin{figure}[ht]
	\centering
	\includegraphics[height=1.5in]{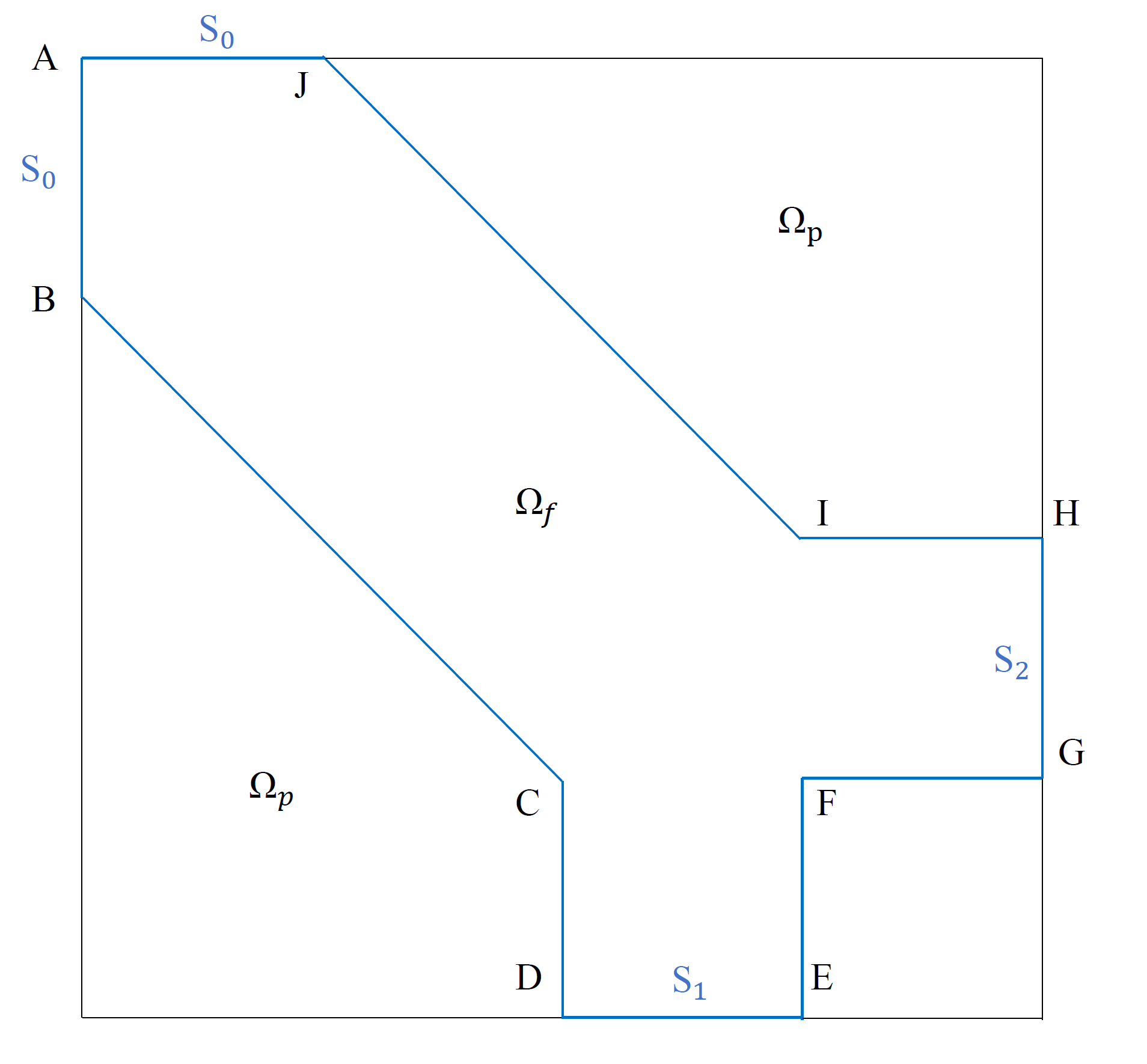}
	\caption{Domain $\Omega_p$ for the porous media and $\Omega_f$ for the free flow. $A=(0,1), B=(0,3/4), C=(1/2,1/4), D=(1/2,0), E=(3/4,0), F=(3/4,1/4), G=(1,1/4), H=(1,1/2), I=(3/4,1/2)$ and $J=(1/4,1)$.}
	\label{Y_style_domain_figure}
\end{figure}

For the numerical simulations, we set the physical parameters $T=1$, $\alpha_{BJ}=1$, $\nu=1$, $g=1$, with the permeability tensor defined as $\mathrm{K}_j=k_jI$ with $j=1, 2, 3,..., J$. All boundary data and source terms are set to zero, with the exception of the fluxes $Q_i$ prescribed on the boundary segments $S_i$ for $i=0, 1, 2$. The spatial domain $\Omega$ is partitioned into a uniform rectangular mesh with characteristic mesh size $h=1/32$. Each rectangle is further bisected into two triangles along its diagonal to form the final triangulation. In this experiment, we enforce a coupling between the spatial and temporal steps by choosing $\Delta t=h$. Here, the GSAV-GBDF3-Ensemble scheme with $\beta=3$ defined by \eqref{3rd-order} is employed. The numerical results obtained at the final time $T=1$ are presented below.

We construct the random hydraulic conductivity tensor as
$k_j(\mathbf{x}, \omega)=(1+\omega)\times10^{-2}$, where $\omega\sim U(0,1)$.
%\begin{eqnarray*}
%	\mathrm{K}(\mathbf{x}, \omega)= a_0+ exp\left\{\left[ Y_1(\omega)cos(\pi y)+Y_3(\omega)sin(\pi y)\right]e^{-\frac{1}{8}}+\left[Y_2(\omega)cos(\pi x)+Y_4(\omega)sin(\pi x)\right] e^{-\frac{1}{8}} \right\}.
%\end{eqnarray*}
%where $\mathbf{x}=(x,y)^T, a_0=1/100$, and $Y_1, \dots, Y_{4}$ are independent and identically distributed with zero mean and unit variance.
We perform a series of ensemble simulations with $J=100$ realizations to assess the model's response to varying flux conditions. In the first scenario, we set $Q_1 = Q_2 = -1$ and $Q_0 = 2$, ensuring that the total inflow rate perfectly balances the total outflow rate. In the second scenario, we maintain $Q_1 = Q_2 = -1$ but reduce the outflow to $Q_0 = 1$. This net positive inflow mimics a recharge-dominant state, such as a rainy season, where excess pressure drives fluid from the conduits into the surrounding porous matrix. Conversely, the third scenario sets $Q_0 = 3$, resulting in a net outflow. This induces a discharge-dominant state, characteristic of a dry season, where the porous media acts as a source feeding the conduits.
The mean velocity profiles computed via the GSAV-GBDF3-Ensemble scheme at the final time $T=1$ for these three cases are illustrated in Figure \ref{GSAV-Ensemble_figure}. A comparative analysis with the results obtained via the traditional fully implicit solver (shown in Figure \ref{Traditional_figure}) confirms that both methods yield nearly identical flow patterns and qualitative behavior. However, the proposed GSAV-Ensemble scheme achieves these results with significantly higher computational efficiency.

\begin{figure}[ht]
	\centering
	\includegraphics[width=0.3\linewidth]{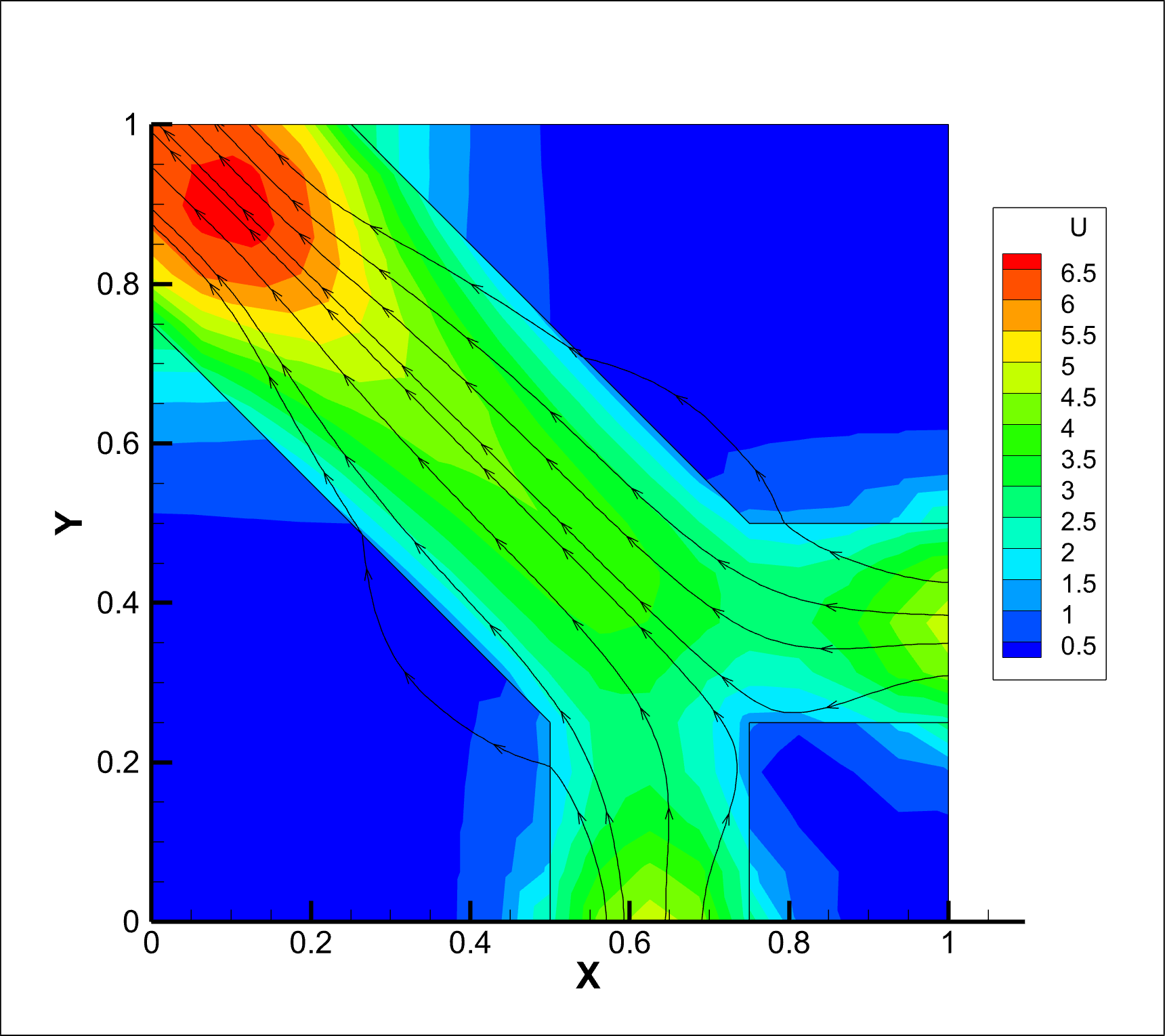}
	\includegraphics[width=0.3\linewidth]{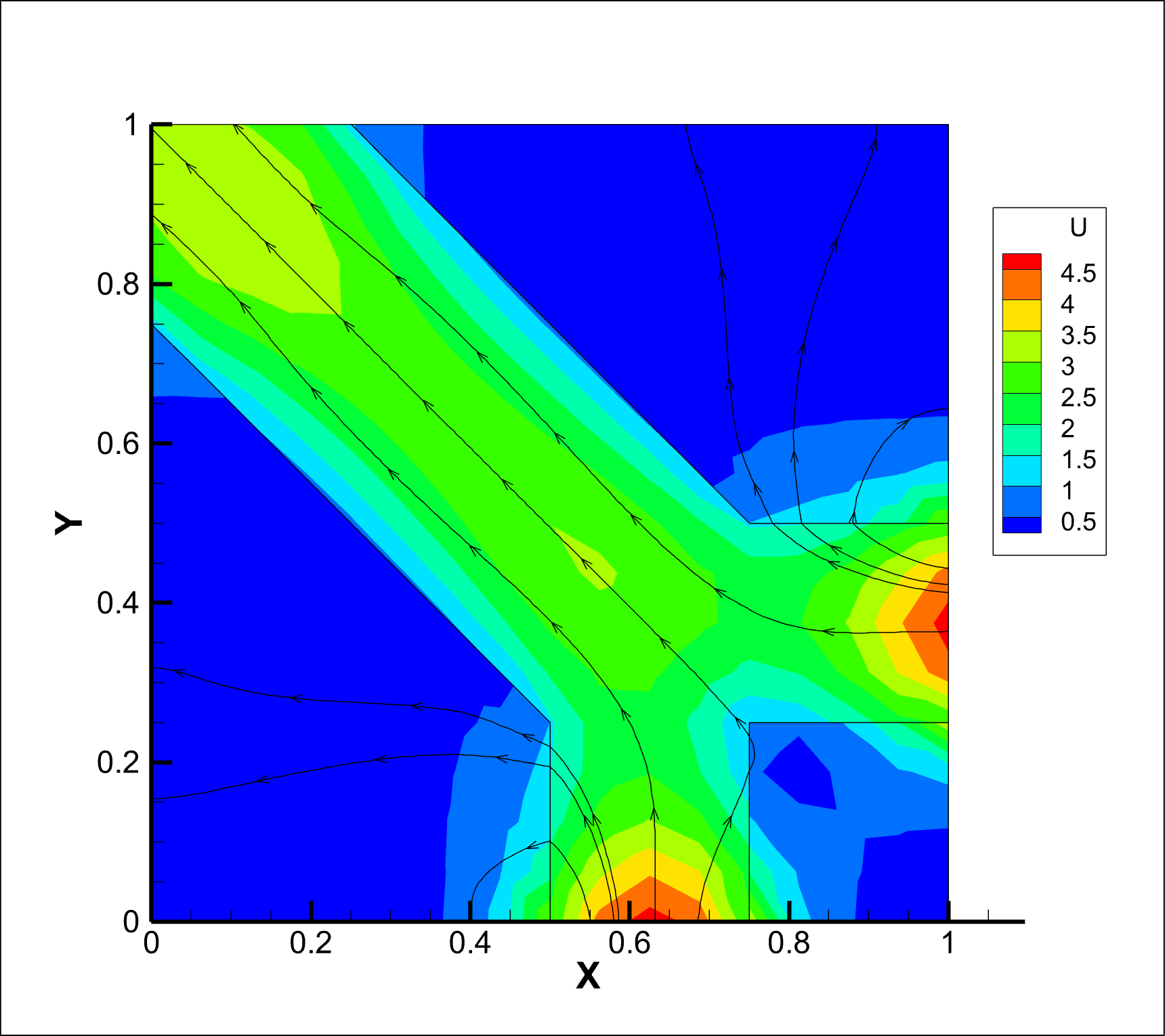}
	\includegraphics[width=0.3\linewidth]{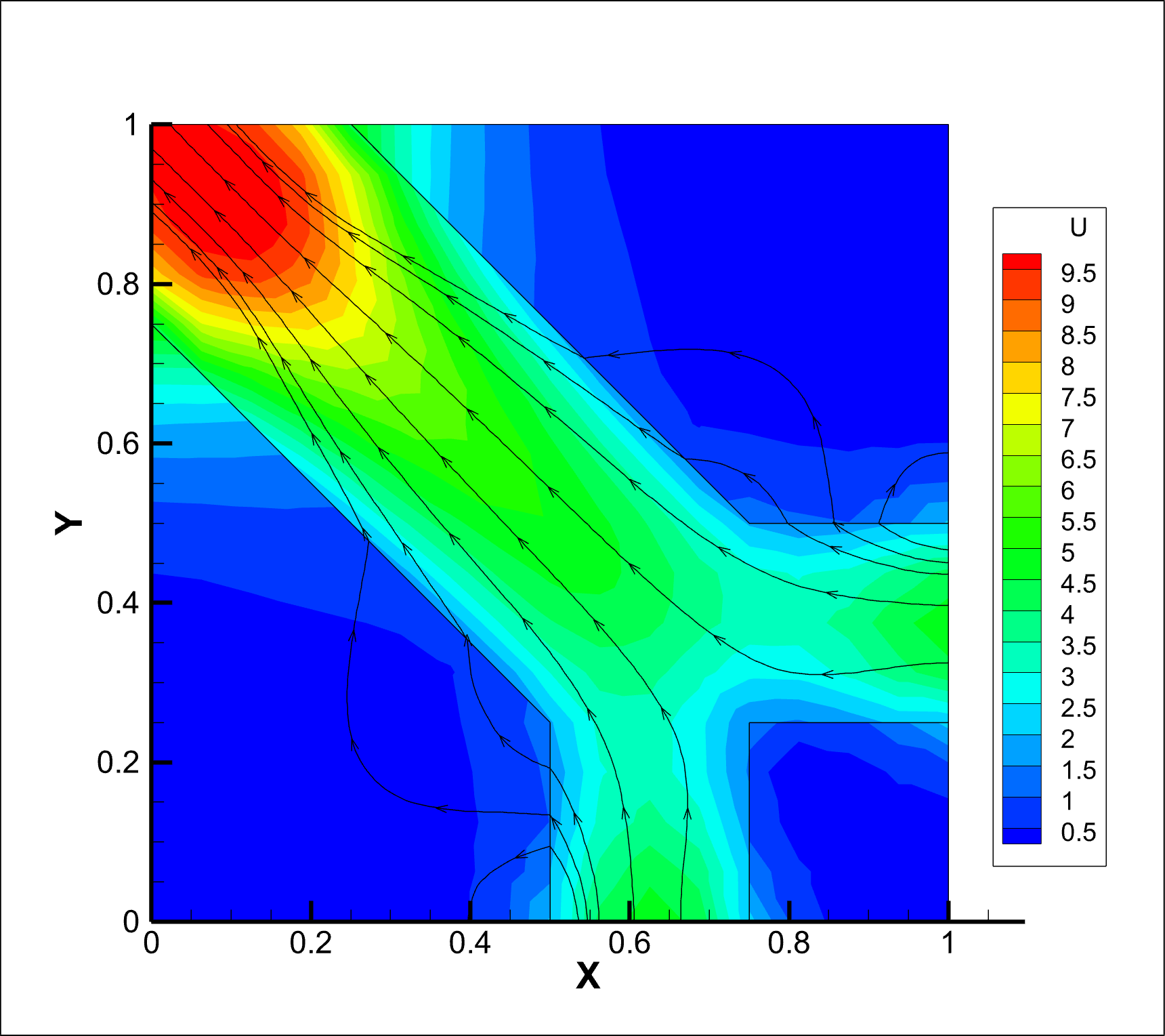}
	\caption{Mean velocity from GSAV-GBDF3-Ensemble algorithm for
		$Q_1=-1$, $Q_2=-1$, and different $Q_0$: $Q_0=2$ (left), $Q_0=1$ (middle) , and $Q_0=3$ (right).}
	\label{GSAV-Ensemble_figure}
\end{figure}
\begin{figure}[ht]
	\centering
	\includegraphics[width=0.3\linewidth]{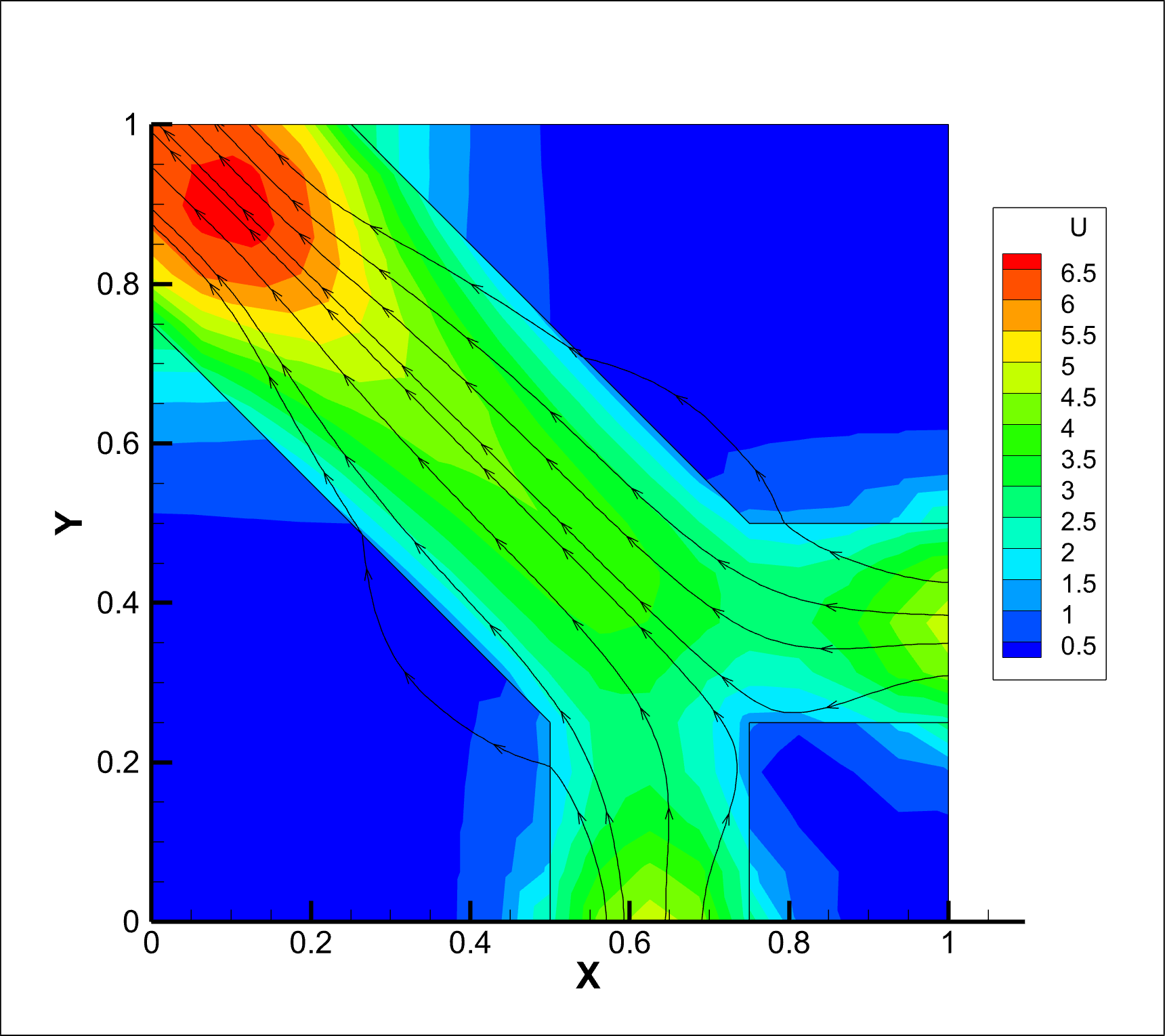}
	\includegraphics[width=0.3\linewidth]{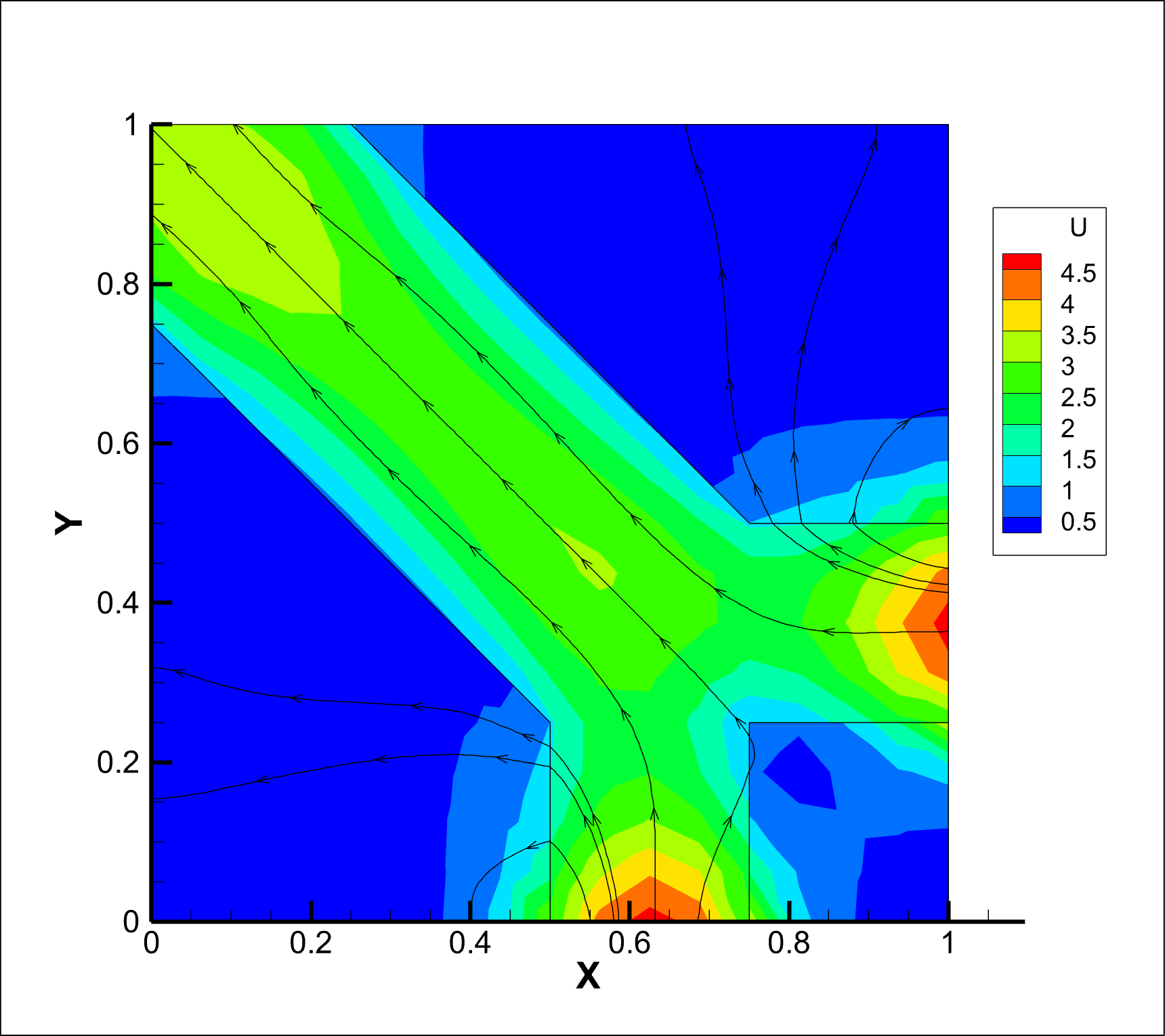}
	\includegraphics[width=0.3\linewidth]{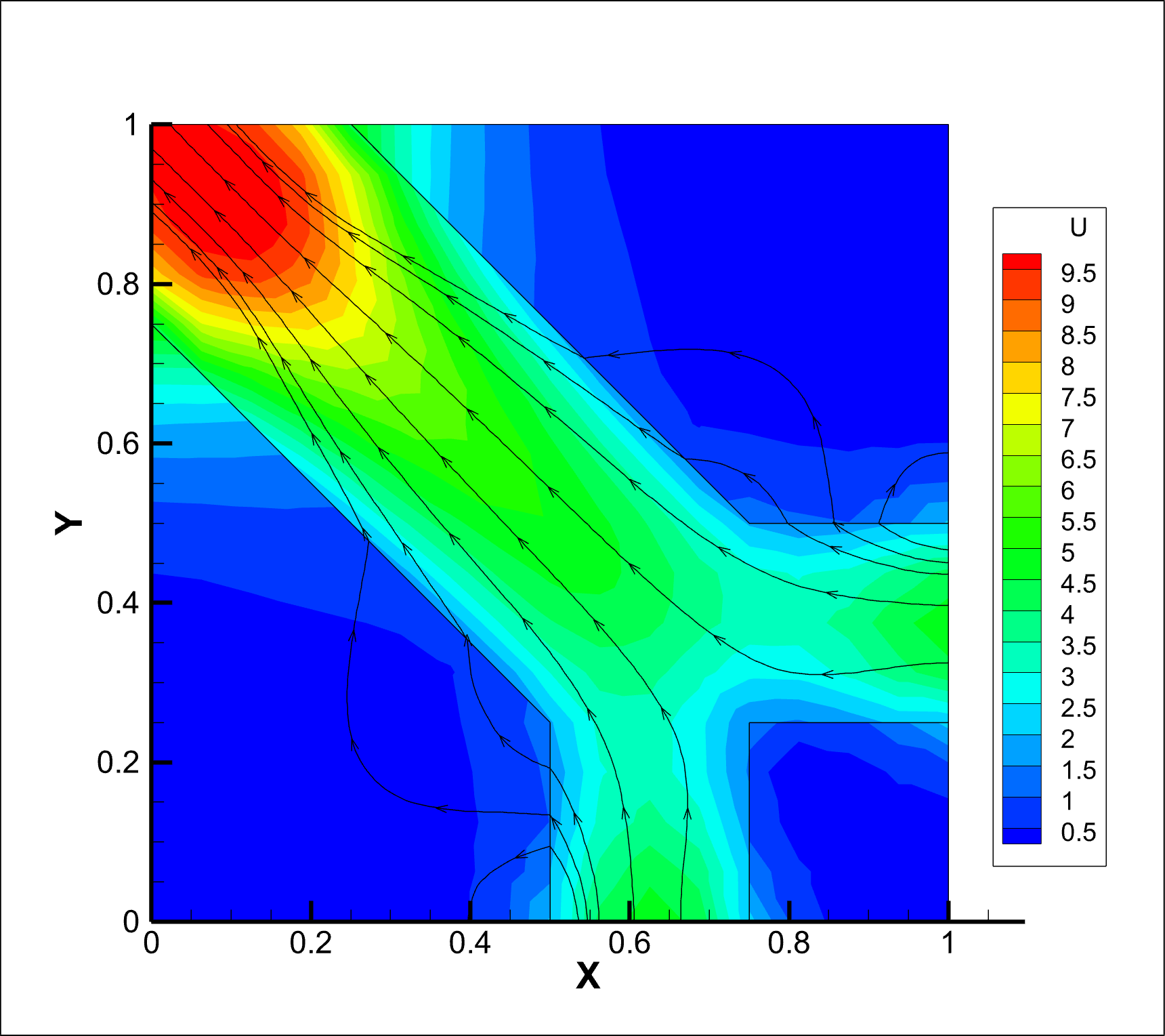}
	\caption{Mean velocity from tradition algorithm for
		$Q_1=-1$, $Q_2=-1$, and different $Q_0$: $Q_0=2$ (left), $Q_0=1$ (middle) , and $Q_0=3$ (right).}
	\label{Traditional_figure}
\end{figure}

\section{Conclusions}\label{sec:conclusions}

In this paper, we developed a class of efficient high-order, long-time stable, and parallel decoupled ensemble schemes for the unsteady Navier--Stokes--Darcy system with uncertain initial data, forcing terms, and hydraulic conductivity tensors. The proposed methods combine a partitioned decoupling strategy, the GSAV approach, and generalized BDF$k$ time discretizations. This design enables explicit treatment of the nonlinear convection term, decouples the free-flow and porous-media subproblems, and allows all ensemble members to share common coefficient matrices, thereby significantly improving computational efficiency. We proved unconditional long-time stability of the proposed schemes by establishing uniform-in-time bounds for the numerical solutions, without imposing time-step restrictions caused by the explicit nonlinear treatment or the partitioned coupling. We also derived optimal-order error estimates, providing a rigorous analysis of the high-order methods. Numerical experiments confirmed the theoretical convergence rates and demonstrated the stability, accuracy, and efficiency of the proposed schemes.

\bibliographystyle{siamplain}
\bibliography{references.bib}

@article{girault2009dg,
	title={DG approximation of coupled Navier--Stokes and Darcy equations by Beaver--Joseph--Saffman interface condition},
	author={Girault, Vivette and Rivi{\`e}re, B{\'e}atrice},
	journal={SIAM Journal on Numerical Analysis},
	volume={47},
	number={3},
	pages={2052--2089},
	year={2009},
	publisher={SIAM}
}

@article{chidyagwai2009solution,
	title={On the solution of the coupled Navier--Stokes and Darcy equations},
	author={Chidyagwai, Prince and Rivi{\`e}re, B{\'e}atrice},
	journal={Computer Methods in Applied Mechanics and Engineering},
	volume={198},
	number={47-48},
	pages={3806--3820},
	year={2009},
	publisher={Elsevier}
}

@article{saffman1971boundary,
	title={On the boundary condition at the surface of a porous medium},
	author={Saffman, Philip Geoffrey},
	journal={Studies in applied mathematics},
	volume={50},
	number={2},
	pages={93--101},
	year={1971},
	publisher={Wiley Online Library}
}

@article{beavers1967boundary,
	title={Boundary conditions at a naturally permeable wall},
	author={Beavers, Gordon S and Joseph, Daniel D},
	journal={Journal of fluid mechanics},
	volume={30},
	number={1},
	pages={197--207},
	year={1967},
	publisher={Cambridge University Press}
}

@Article{shen1990long,
	title={Long time stability and convergence for fully discrete nonlinear {Galerkin} methods},
	author={Shen, Jie},
	journal={Applicable Analysis},
	volume={38},
	number={4},
	pages={201--229},
	year={1990},
	publisher={Taylor \& Francis}
}

@Article{huang2025stability,
	title={Stability and error analysis of a new class of higher-order consistent splitting schemes for the {Navier-Stokes} equations},
	author={Huang, Fukeng and Shen, Jie},
	journal={Mathematics of Computation, https://doi.org/10.1090/mcom/4132},
	year={2025}
}

@Article{huang2024new,
	title={On a new class of {BDF and IMEX} schemes for parabolic type equations},
	author={Huang, Fukeng and Shen, Jie},
	journal={SIAM Journal on Numerical Analysis},
	volume={62},
	number={4},
	pages={1609--1637},
	year={2024},
	publisher={SIAM}
}

@Article{huang2023stability,
	title={Stability and error analysis of a second-order consistent splitting scheme for the {Navier--Stokes} equations},
	author={Huang, Fukeng and Shen, Jie},
	journal={SIAM Journal on Numerical Analysis},
	volume={61},
	number={5},
	pages={2408--2433},
	year={2023},
	publisher={SIAM}
}

@Article{wang2024class,
	title={A Class of New Linear, Efficient and High-Order Implicit-Explicit Methods for the Unsteady {Navier--Stokes-Darcy} Model Based on Nonlinear {Lions} Interface Condition},
	author={Wang, Xinhui and Guo, Xu and Li, Xiaoli},
	journal={Journal of Scientific Computing},
	volume={101},
	number={2},
	pages={40},
	year={2024},
	publisher={Springer}
}

@Article{shan2013partitioned,
	title={Partitioned time stepping method for fully evolutionary {Stokes--Darcy} flow with {Beavers--Joseph} interface conditions},
	author={Shan, Li and Zheng, Haibiao},
	journal={SIAM Journal on Numerical Analysis},
	volume={51},
	number={2},
	pages={813--839},
	year={2013},
	publisher={SIAM}
}

@Article{layton2013analysis,
	title={Analysis of long time stability and errors of two partitioned methods for uncoupling evolutionary groundwater--surface water flows},
	author={Layton, William and Tran, Hoang and Trenchea, Catalin},
	journal={SIAM Journal on Numerical Analysis},
	volume={51},
	number={1},
	pages={248--272},
	year={2013},
	publisher={SIAM}
}

@Article{chen2013efficient,
	title={Efficient and long-time accurate second-order methods for the {Stokes--Darcy} system},
	author={Chen, Wenbin and Gunzburger, Max and Sun, Dong and Wang, Xiaoming},
	journal={SIAM Journal on Numerical Analysis},
	volume={51},
	number={5},
	pages={2563--2584},
	year={2013},
	publisher={SIAM}
}

@Article{mu2010decoupled,
	title={Decoupled schemes for a non-stationary mixed {Stokes-Darcy} model},
	author={Mu, Mo and Zhu, Xiaohong},
	journal={Mathematics of Computation},
	volume={79},
	number={270},
	pages={707--731},
	year={2010}
}

@Article{huang2022new,
	title={A new class of implicit--explicit {BDFk SAV} schemes for general dissipative systems and their error analysis},
	author={Huang, Fukeng and Shen, Jie},
	journal={Computer Methods in Applied Mechanics and Engineering},
	volume={392},
	pages={114718},
	year={2022},
	publisher={Elsevier}
}

@Article{huang2020highly,
	title={A highly efficient and accurate new scalar auxiliary variable approach for gradient flows},
	author={Huang, Fukeng and Shen, Jie and Yang, Zhiguo},
	journal={SIAM Journal on Scientific Computing},
	volume={42},
	number={4},
	pages={A2514--A2536},
	year={2020},
	publisher={SIAM}
}

@Article{shen2019new,
	title={A new class of efficient and robust energy stable schemes for gradient flows},
	author={Shen, Jie and Xu, Jie and Yang, Jiang},
	journal={SIAM Review},
	volume={61},
	number={3},
	pages={474--506},
	year={2019},
	publisher={SIAM}
}

@Article{shen2018scalar,
	title={The scalar auxiliary variable ({SAV}) approach for gradient flows},
	author={Shen, Jie and Xu, Jie and Yang, Jiang},
	journal={Journal of Computational Physics},
	volume={353},
	pages={407--416},
	year={2018},
	publisher={Elsevier}
}

@Article{qiu2025high,
	title={A high order ensemble algorithm for dual-porosity-{Navier-Stokes} flows},
	author={Qiu, Changxin and Hou, Jiangyong and Xia, Yinhua and Shan, Li},
	journal={Journal of Computational Physics},
	volume={529},
	pages={113833},
	year={2025},
	publisher={Elsevier}
}

@Article{jiang2024highly,
	title={Highly efficient ensemble algorithms for computing the {Stokes--Darcy} equations},
	author={Jiang, Nan and Yang, Huanhuan},
	journal={Computer Methods in Applied Mechanics and Engineering},
	volume={418},
	pages={116562},
	year={2024},
	publisher={Elsevier}
}

@Article{jiang2021artificial,
	title={An artificial compressibility {Crank--Nicolson} leap-frog method for the {Stokes--Darcy} model and application in ensemble simulations},
	author={Jiang, Nan and Li, Ying and Yang, Huanhuan},
	journal={SIAM Journal on Numerical Analysis},
	volume={59},
	number={1},
	pages={401--428},
	year={2021},
	publisher={SIAM}
}

@Article{jiang2021sav,
	title={{SAV} decoupled ensemble algorithms for fast computation of {Stokes--Darcy} flow ensembles},
	author={Jiang, Nan and Yang, Huanhuan},
	journal={Computer Methods in Applied Mechanics and Engineering},
	volume={387},
	pages={114150},
	year={2021},
	publisher={Elsevier}
}

@Article{he2020artificial,
	title={An artificial compressibility ensemble algorithm for a stochastic {Stokes-Darcy} model with random hydraulic conductivity and interface conditions},
	author={He, Xiaoming and Jiang, Nan and Qiu, Changxin},
	journal={International Journal for Numerical Methods in Engineering},
	volume={121},
	number={4},
	pages={712--739},
	year={2020},
	publisher={Wiley Online Library}
}

@Article{jiang2019efficient,
	title={An efficient ensemble algorithm for numerical approximation of stochastic {Stokes--Darcy} equations},
	author={Jiang, Nan and Qiu, Changxin},
	journal={Computer Methods in Applied Mechanics and Engineering},
	volume={343},
	pages={249--275},
	year={2019},
	publisher={Elsevier}
}

@Article{mohebujjaman2017efficient,
	title={An efficient algorithm for computation of {MHD} flow ensembles},
	author={Mohebujjaman, Muhammad and Rebholz, Leo G},
	journal={Computational Methods in Applied Mathematics},
	volume={17},
	number={1},
	pages={121--137},
	year={2017},
	publisher={De Gruyter}
}

@Article{fiordilino2018second,
	title={A second order ensemble timestepping algorithm for natural convection},
	author={Fiordilino, Joseph A},
	journal={SIAM Journal on Numerical Analysis},
	volume={56},
	number={2},
	pages={816--837},
	year={2018},
	publisher={SIAM}
}

@Article{jiang2021stabilized,
	title={Stabilized scalar auxiliary variable ensemble algorithms for parameterized flow problems},
	author={Jiang, Nan and Yang, Huanhuan},
	journal={SIAM Journal on Scientific Computing},
	volume={43},
	number={4},
	pages={A2869--A2896},
	year={2021},
	publisher={SIAM}
}

@Article{gunzburger2019efficient,
	title={An efficient algorithm for simulating ensembles of parameterized flow problems},
	author={Gunzburger, Max and Jiang, Nan and Wang, Zhu},
	journal={IMA Journal of Numerical Analysis},
	volume={39},
	number={3},
	pages={1180--1205},
	year={2019},
	publisher={Oxford University Press}
}

@Article{jiang2015higher,
	title={A higher order ensemble simulation algorithm for fluid flows},
	author={Jiang, Nan},
	journal={Journal of Scientific Computing},
	volume={64},
	number={1},
	pages={264--288},
	year={2015},
	publisher={Springer}
}

@Article{jiang2014algorithm,
	title={An algorithm for fast calculation of flow ensembles},
	author={Jiang, Nan and Layton, William},
	journal={International Journal for Uncertainty Quantification},
	volume={4},
	number={4},
	year={2014},
	publisher={Begel House Inc.}
}

@Article{owen2017comparison,
	title={Comparison of surrogate-based uncertainty quantification methods for computationally expensive simulators},
	author={Owen, Nicola E and Challenor, Peter and Menon, Prathyush P and Bennani, Samir},
	journal={SIAM/ASA Journal on Uncertainty Quantification},
	volume={5},
	number={1},
	pages={403--435},
	year={2017},
	publisher={SIAM}
}

@Article{reagana2003uncertainty,
	title={Uncertainty quantification in reacting-flow simulations through non-intrusive spectral projection},
	author={Reagana, Matthew T and Najm, Habib N and Ghanem, Roger G and Knio, Omar M},
	journal={Combustion and flame},
	volume={132},
	number={3},
	pages={545--555},
	year={2003},
	publisher={Elsevier}
}

@Article{ganis2008stochastic,
	title={Stochastic collocation and mixed finite elements for flow in porous media},
	author={Ganis, Benjamin and Klie, Hector and Wheeler, Mary F and Wildey, Tim and Yotov, Ivan and Zhang, Dongxiao},
	journal={Computer methods in applied mechanics and engineering},
	volume={197},
	number={43-44},
	pages={3547--3559},
	year={2008},
	publisher={Elsevier}
}

@Article{nobile2008sparse,
	title={A sparse grid stochastic collocation method for partial differential equations with random input data},
	author={Nobile, Fabio and Tempone, Ra{\'u}l and Webster, Clayton G},
	journal={SIAM Journal on Numerical Analysis},
	volume={46},
	number={5},
	pages={2309--2345},
	year={2008},
	publisher={SIAM}
}

@Article{babuvska2007stochastic,
	title={A stochastic collocation method for elliptic partial differential equations with random input data},
	author={Babu{\v{s}}ka, Ivo and Nobile, Fabio and Tempone, Ra{\'u}l},
	journal={SIAM Journal on Numerical Analysis},
	volume={45},
	number={3},
	pages={1005--1034},
	year={2007},
	publisher={SIAM}
}

@Article{xiu2005high,
	title={High-order collocation methods for differential equations with random inputs},
	author={Xiu, Dongbin and Hesthaven, Jan S},
	journal={SIAM Journal on Scientific Computing},
	volume={27},
	number={3},
	pages={1118--1139},
	year={2005},
	publisher={SIAM}
}

@Article{kuo2012quasi,
	title={Quasi-{Monte Carlo} finite element methods for a class of elliptic partial differential equations with random coefficients},
	author={Kuo, Frances Y and Schwab, Christoph and Sloan, Ian H},
	journal={SIAM Journal on Numerical Analysis},
	volume={50},
	number={6},
	pages={3351--3374},
	year={2012},
	publisher={SIAM}
}

@Article{barth2012multilevel,
	title={Multilevel {Monte Carlo} method with applications to stochastic partial differential equations},
	author={Barth, Andrea and Lang, Annika},
	journal={International Journal of Computer Mathematics},
	volume={89},
	number={18},
	pages={2479--2498},
	year={2012},
	publisher={Taylor \& Francis}
}

@Misc{pgfplots,
  author =	 {Christian Feuers\"anger},
  title =	 {Manual for Package \texttt{PGFPLOTS}},
  month =	 may,
  year =	 2015,
  url =		 {http://sourceforge.net/projects/pgfplots}
}

\end{document}